  \title{\textbf{Curvature of a class of indefinite globally framed $f$-manifolds}}
 \author{\begin{tabular}{cc}
  Letizia Brunetti and Anna Maria Pastore\\
     \end{tabular}}
   \date{}
\newtheorem{theorem}{Theorem}[section]
\newtheorem{remark}[theorem]{Remark}
\newtheorem{corollary}[theorem]{Corollary}
\newtheorem{definition}[theorem]{Definition}
\newtheorem{example}[theorem]{Example}
\newtheorem{lemma}[theorem]{Lemma}
\newtheorem{proposition}[theorem]{Proposition}
\newenvironment{proof}[1][Proof]{\emph{#1.} }{\hfill{\mbox{$\square$}}\medskip}
\begin{document}

\maketitle

\begin{abstract}
We present a compared analysis  of some properties of indefinite almost $\mathcal{S}$-manifolds 
and indefinite $\mathcal{S}$-manifolds. We give some characterizations in terms of the Levi-Civita connection and of the characteristic vector fields.  We study the sectional and ${\varphi}$-sectional curvature of indefinite almost $\mathcal{S}$-manifolds and state an expression of the curvature tensor field for the indefinite $\mathcal{S}$-space forms. We analyse the sectional curvature of indefinite $\mathcal{S}$-manifold in which the number of the  spacelike characteristic vector fields is equal to that of the timelike characteristic vector fields. 
Some examples are also described.
\end{abstract}

\textbf{2000 Mathematics Subject Classification} 53C50, 53C15, 53D10 
\\
\textbf{Keywords and phrases} Semi-Riemannian manifolds, indefinite metrics, $f$-structures, sectional curvature, $\varphi$-sectional curvature.

\section{Introduction} In the framework of Riemannian geometry, almost $S$-manifolds and $S$-manifolds represent a natural generalization of contact and Sasaki manifolds, respectively. Such manifolds have been extensively studied by several authors and from different points of view (\cite{Bl1, Bl2, Bl3, Di, Di1, DIP}). On the other hand, also Sasakian manifolds with semi-Riemannian metric have been considered (\cite{[DB], Calin1, T}), and in recent works many authors, (for example, in \cite{DS}, K.L.\ Duggal and B.\ Sahin) study lightlike submanifolds of indefinite Sasakian manifolds. Indefinite $\mathcal{S}$-manifolds are natural generalizations of indefinite Sasaki manifolds. Moreover many spacetime manifolds can be endowed with $f$-structures (\cite{Dug0}).

After a first section on $f$-structures and indefinite metric $g.f.f$-structures, in section 3, we carry out an in-depth study of the indefinite (almost) $\mathcal{S}$-manifolds.
In section 4 we describe two examples of $6$-dimensional indefinite $\mathcal{S}$-manifolds having two characteristic vector fields which are both spacelike or both timelike. A third example is a Lorentzian indefinite $\mathcal{S}$-manifold of dimension $4$ with two characteristic vector fields of different causal type.
In section 5, after some Lemmas, we prove that the ${\varphi}$-sectional curvatures completely determine the sectional curvatures. Then, we find an expression of the curvature tensor field ${R}$ which characterizes the indefinite $\mathcal{S}$-space forms, that is indefinite $\mathcal{S}$-manifolds with constant ${\varphi}$-sectional curvature.  
Then, in section 6, we consider the curvature of special indefinite $\mathcal{S}$-manifold in which the number of the characteristic vector fields is even with an equal number of spacelike and timelike characteristic vector fields; we prove that the special indefinite $\mathcal{S}$-manifold described in the third example in section 4 turns out to be an indefinite $\mathcal{S}$-space form whose ${\varphi}$-sectional curvature vanishes.

 All manifolds and tensor fields are assumed to be smooth.

\emph{Acknowledgments}. The authors are grateful to Prof. S. Ianus for discussions about the topic of this paper during his stay at the University of Bari and the stay of the first author at the University of Bucharest.

\section{Indefinite metric $f$-structure}
We recall that an $f$-structure on a manifold $M$ is a non null $(1,1)$-tensor field $\varphi$ on $M$ of constant rank such that $\varphi\,^{3}+\varphi=0$.
A manifold $M$, provided with an $f$-structure, is said to be an
$f$\emph{-manifold}, and it is known that $TM$ splits into two complementary subbundles $\operatorname{Im}\varphi$ and $\ker\varphi$ and that the restriction of $\varphi$ to $\operatorname{Im}\varphi$ determines a complex structure on it and the rank of $\varphi$ is even. 
An interesting case of $f$-structure occurs when $\ker\varphi$ is parallelizable for which there exist global vector fields $\xi_{\alpha}$, ${\alpha\in\{1,\ldots,r\}}$, with their dual $1$-forms $\eta^{\alpha}$, satisfying:
$\varphi^{2}=-I+\sum_{\alpha =1}^r \eta^{\alpha}\otimes\xi_{\alpha}$, and $\eta^{\alpha}(\xi_{\beta})=\delta_{\beta}^{\alpha}$.
Such an $f$-structure is called an $f$-\emph{structure with parallelizable kernel} or \emph{globally
framed }$f$-\emph{structure}, briefly denoted $g.f.f$-\emph{structure} (\cite{GY}). Moreover, a  manifold $M$ endowed with a $g.f.f$-structure is
called a $g.f.f$-\emph{manifold}, and it is denoted with $(
M,\varphi,\xi_{\alpha},\eta^{\alpha})$; the vector fields $\xi_{\alpha}$, $(\alpha=1,....,r)$, are
called \emph{characteristic vector fields}.

It is also known that an $f$-structure, on a manifold $M$, is called \emph{normal} if the tensor field $N=N_{\varphi}+2\sum_{\alpha =1}^r d \eta^{\alpha}\otimes\xi_{\alpha}$ vanishes, where $N_{\varphi}$ is the Nijenhuis torsion of $\varphi$.

\begin{definition}\emph{
Let $(M,\varphi)$ be a
$(2n+r)$-dimensional $f$-manifold and $g$ a semi-Riemannian metric on
${M}$ with index $\nu$, $0<\nu<2n+r$. Then, the pair $(
\varphi,g)$ is said to be an {\it indefinite metric $f$-structure}, and the triple $(M%
,\varphi,g)$ is called an {\it indefinite metric $f$-manifold}, if $\varphi$ is skew-symmetric with
respect to $g$, that is, for any $X,Y\in\Gamma(TM)
$:}
\begin{equation*}
g(\varphi X,Y)+g(X,\varphi
Y)=0.
\end{equation*}
\end{definition}
\begin{definition}\emph{
Let $(M^{2n+r},\varphi,\xi_{\alpha
},\eta^{\alpha})$ be a $g.f.f$
-manifold, and $g$ a semi-Riemannian metric on $M$ with index
$\nu$, $0<\nu<2n+r$. Then, we say that the two structures are
{\it compatible} if for any $X,Y\in \Gamma (TM)$ 
\begin{equation}\label{eqz01}
g(\varphi X,\varphi Y)=g(
X,Y)-\sum_{\alpha=1}^{r}\varepsilon_{\alpha}\eta^{\alpha}(
X)  \eta^{\alpha}(Y),\quad  \varepsilon_{\alpha}g(X,\xi_{\alpha})=\eta
^{\alpha}(X)\quad {\rm for\; any }\; \alpha\in\{1,\ldots,r\},
\end{equation}
where $\varepsilon_{\alpha}=\pm1$ according to whether $\xi_{\alpha}$ is
spacelike or timelike.
Then $(M^{2n+r},\varphi,\xi_{\alpha},\eta
^{\alpha},g)$ is called an {\it indefinite metric
$g.f.f$-manifold}.}
\end{definition}

We shall use the Einstein convention  omitting the sum symbol for repeated indices above and below, writing, e.g., $\varepsilon_{\alpha}\eta^{\alpha}(
X)  \eta^{\alpha}(Y)$ to mean $\sum_{\alpha=1}^{r}\varepsilon_{\alpha}\eta^{\alpha}(
X)  \eta^{\alpha}(Y)$.

Observe that if $g$ is a semi-Riemannian metric on a
$g.f.f$-manifold $(M,\varphi,\xi_{\alpha
},\eta^{\alpha})$ compatible with the $f$-structure
$\varphi$, then the pair $(\varphi,g)$ is necessarily an
indefinite metric $f$-structure. The fundamental $2$-form $\Phi$ is defined putting $\Phi(X,Y)=g(X,\varphi Y)$, for any $X,Y\in\Gamma(TM)$. Let $(M,\varphi,\xi_{\alpha
},\eta^{\alpha})$, with $\alpha=1,\ldots,r$, be a $g.f.f$%
-manifold, and $g$ a compatible semi-Riemannian metric on $M$. We
know that the orthogonal decomposition $TM=\operatorname{Im}\varphi\oplus \ker\varphi$ holds, and that the induced structure
$J$ on $\operatorname{Im}\varphi$ is an almost complex structure; then
$(\operatorname{Im}\varphi,g=g|
_{\operatorname{Im}\varphi},J)$ is a indefinite Hermitian
distribution and the only possible signatures of $g$ are $(
2p,2q)$ with $p+q=n$; therefore $g$ cannot be a Lorentz metric, for $n>1$.
  We shall denote
$\operatorname{Im}\varphi$ and $\ker\varphi$ with
$\mathfrak{D}$ and $\mathfrak{D}^{\bot}$ respectively and for a section of $\mathfrak{D}$ ( $\mathfrak{D}^{\bot}$) we will write $X\in\mathfrak{D}$ or $X\in\Gamma(\mathfrak{D})\,$  ( $X\in\mathfrak{D}^{\bot}$ or $X\in\Gamma(\mathfrak{D}^{\bot})$). 

We recall the following result due to A.\ Bejancu and K.L.\ Duggal (\cite{[DB]}).
\begin{theorem}
Let $(M,\varphi,\xi_{\alpha
},\eta^{\alpha})$, $\alpha=1,\ldots,r$, be a $g.f.f.$%
-manifold and $h_{0}$ a semi-Riemannian metric on $M$; 
we suppose that
$\{\xi_{\alpha}\}_{1\leq\alpha\leq r}$ are $h_{0}$-orthonormal
and that $h_{0}(\xi_{\alpha},\xi_{\alpha})
=-\varepsilon_{\alpha}$, for any $\alpha \in \{1,\ldots
,r\}$. Then there exists a symmetric tensor field $g$
of type $(0,2)$ on $M$ satisfying {\rm (\ref{eqz01})}.
\end{theorem}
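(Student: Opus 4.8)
The plan is to construct $g$ explicitly by correcting the given metric $h_0$ so as to enforce the three features that the relations (\ref{eqz01}) impose: that $\varphi$ be Hermitian on $\mathfrak{D}=\operatorname{Im}\varphi$ (indeed (\ref{eqz01}) forces $g(\varphi X,\varphi Y)=g(X,Y)$ there), that $\mathfrak{D}$ and $\ker\varphi$ be $g$-orthogonal, and that on $\ker\varphi$ the characteristic fields acquire exactly the signs $\varepsilon_\alpha$. Writing $P^{\bot}X=\eta^\alpha(X)\xi_\alpha=X+\varphi^{2}X$ for the projection of $X$ onto $\ker\varphi$ furnished by the $g.f.f$-structure, I would set
\[
g(X,Y)=\tfrac12\big(h_0(\varphi X,\varphi Y)+h_0(\varphi^{2}X,\varphi^{2}Y)\big)-h_0(P^{\bot}X,P^{\bot}Y).
\]
Symmetry of $g$ is inherited from $h_0$, and the symmetrized form $\tfrac12(h_0(\varphi\cdot,\varphi\cdot)+h_0(\varphi^2\cdot,\varphi^2\cdot))$ is the natural device that turns a generic $h_0$, which need not be $\varphi$-invariant on $\mathfrak{D}$, into one that is.

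Before checking (\ref{eqz01}) I would record the purely structural identities, all consequences of $\varphi^{3}+\varphi=0$, $\varphi^{2}=-I+\eta^\alpha\otimes\xi_\alpha$ and $\eta^\alpha(\xi_\beta)=\delta^\alpha_\beta$: namely $\varphi\xi_\alpha=0$, $\varphi^{2}\xi_\alpha=0$, $\eta^\alpha\circ\varphi=0$ (hence $P^{\bot}\circ\varphi=0$) and $\varphi^{2}\circ\varphi=-\varphi$. With these, computing $g(\xi_\alpha,Y)$ kills the first two terms and leaves $-h_0(\xi_\alpha,P^{\bot}Y)=-\eta^\beta(Y)\,h_0(\xi_\alpha,\xi_\beta)$; this is exactly where the hypotheses are consumed, since $h_0$-orthonormality removes the off-diagonal terms and $h_0(\xi_\alpha,\xi_\alpha)=-\varepsilon_\alpha$ converts the sum into $\varepsilon_\alpha\eta^\alpha(Y)$. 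Thus $\varepsilon_\alpha g(Y,\xi_\alpha)=\eta^\alpha(Y)$, which is the second relation of (\ref{eqz01}); the same computation, applied with $X\in\mathfrak{D}$ and using $\eta^\alpha\circ\varphi=0$, yields $g(X,\xi_\alpha)=0$, so $\mathfrak{D}$ and $\ker\varphi$ are indeed $g$-orthogonal, and it also records the identity $-h_0(P^{\bot}X,P^{\bot}Y)=\varepsilon_\alpha\eta^\alpha(X)\eta^\alpha(Y)$.

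For the first relation of (\ref{eqz01}) I would replace $X,Y$ by $\varphi X,\varphi Y$: using $\varphi^{2}\varphi=-\varphi$ and $P^{\bot}\varphi=0$ the kernel term drops out and $g(\varphi X,\varphi Y)=\tfrac12(h_0(\varphi^{2}X,\varphi^{2}Y)+h_0(\varphi X,\varphi Y))$, which by the definition of $g$ equals $g(X,Y)+h_0(P^{\bot}X,P^{\bot}Y)=g(X,Y)-\varepsilon_\alpha\eta^\alpha(X)\eta^\alpha(Y)$ in view of the identity just obtained. I do not expect a genuine obstacle here; the only delicate point is the sign bookkeeping on the kernel, where it is precisely the minus sign in the hypothesis $h_0(\xi_\alpha,\xi_\alpha)=-\varepsilon_\alpha$ that makes the term $-h_0(P^{\bot}\cdot,P^{\bot}\cdot)$ reproduce the required $+\varepsilon_\alpha\eta^\alpha\otimes\eta^\alpha$, and hence the prescribed causal character of the $\xi_\alpha$. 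Finally I would note that the statement asks only for a symmetric $(0,2)$-tensor satisfying (\ref{eqz01}); should one also want $g$ nondegenerate, it would remain to check that the symmetrized form is nondegenerate on $\mathfrak{D}$, which is an additional condition on the signature of $h_0|_{\mathfrak{D}}$ and is not part of the present claim.
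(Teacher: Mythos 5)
Your proof is correct. A preliminary remark: the paper itself gives no proof of this statement — it is recalled verbatim from Duggal and Bejancu \cite{[DB]} — so there is no in-paper argument to compare against; what can be said is that your construction is essentially the classical one (Blair in the Riemannian case, Duggal--Bejancu in the indefinite case), namely ``make $\ker\varphi$ orthogonal to $\operatorname{Im}\varphi$ with the prescribed norms on the $\xi_\alpha$, then average over $\varphi$'', collapsed into the single formula
\[
g(X,Y)=\tfrac12\big(h_0(\varphi X,\varphi Y)+h_0(\varphi^{2}X,\varphi^{2}Y)\big)-h_0(P^{\bot}X,P^{\bot}Y).
\]
All your steps check out: the structural identities $\varphi\xi_\alpha=0$, $\varphi^{2}\xi_\alpha=0$, $\eta^\alpha\circ\varphi=0$ (hence $P^{\bot}\circ\varphi=0$) and $\varphi^{3}=-\varphi$ are genuine consequences of the $g.f.f$ axioms; the computation $g(\xi_\alpha,Y)=-\eta^{\beta}(Y)\,h_0(\xi_\alpha,\xi_\beta)=\varepsilon_\alpha\eta^{\alpha}(Y)$ consumes the two hypotheses ($h_0$-orthonormality and $h_0(\xi_\alpha,\xi_\alpha)=-\varepsilon_\alpha$) exactly where needed, yielding the second relation of (\ref{eqz01}) and the identity $-h_0(P^{\bot}X,P^{\bot}Y)=\varepsilon_\alpha\eta^{\alpha}(X)\eta^{\alpha}(Y)$; and the substitution $X\mapsto\varphi X$, $Y\mapsto\varphi Y$, together with these, gives the first relation. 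Your closing caveat is also accurate and worth keeping: the theorem claims only a symmetric $(0,2)$-tensor satisfying (\ref{eqz01}), and indeed your $g$ restricted to $\operatorname{Im}\varphi$, being the average $\tfrac12\left(h_0(\varphi\cdot,\varphi\cdot)+h_0(\cdot,\cdot)\right)$ of two possibly indefinite forms, need not be nondegenerate — which is precisely why the statement is phrased as it is, and why nondegeneracy is an extra requirement in the definition of a compatible metric rather than an automatic consequence of this construction.
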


Now, with a standard computation as in the Riemannian setting (\cite{Bl1}), one can prove the following results.
\begin{proposition}
Let $(
M,\varphi,\xi_{\alpha},\eta^{\alpha},g)$ be
an indefinite metric $g.f.f$-manifold. Then, the Levi-Civita connection
satisfies the following equality, for any $X,Y,Z\in \Gamma(TM)$:
\begin{align}
2g((\nabla_{X}\varphi)Y,Z)   &
=3d\Phi(X,\varphi Y,\varphi Z)  -3d\Phi(
X,Y,Z) \label{eq-01} +g(N(Y,Z),\varphi X)+\varepsilon_{\alpha}N_{\alpha}^{(  2)
}(Y,Z)  \eta^{\alpha}(X) \nonumber \\
&\quad +2\varepsilon_{\alpha}d\eta
^{\alpha}(\varphi Y,X)  \eta^{\alpha}(Z)
 -2\varepsilon_{\alpha}d\eta
^{\alpha}(\varphi Z,X)  \eta^{\alpha}(
Y),
\end{align}
where  
$N_{\alpha}^{(2)  }(X,Y)=(\mathcal{L}
_{\varphi X}\eta^{\alpha}) (Y)  -(
\mathcal{L}_{\varphi Y}\eta^{\alpha}) (X)=2d\eta^{\alpha}(\varphi X,Y)  -2d\eta
^{\alpha}(\varphi Y,X)$.
\end{proposition}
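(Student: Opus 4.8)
The plan is to carry out the semi-Riemannian analogue of the classical computation in Blair's book, the only genuinely new feature being the careful placement of the signs $\varepsilon_{\alpha}$. First I would rewrite the left-hand side using $(\nabla_{X}\varphi)Y=\nabla_{X}(\varphi Y)-\varphi(\nabla_{X}Y)$ together with the skew-symmetry of $\varphi$ with respect to $g$ and the fact that $\nabla$ is metric, obtaining
\[
g((\nabla_{X}\varphi)Y,Z)=g(\nabla_{X}(\varphi Y),Z)+g(\nabla_{X}Y,\varphi Z).
\]
This reduces everything to two Levi-Civita terms, to which I apply the Koszul formula
\[
2g(\nabla_{U}W,V)=Ug(W,V)+Wg(U,V)-Vg(U,W)+g([U,W],V)-g([U,V],W)-g([W,V],U)
\]
with $(U,W,V)=(X,\varphi Y,Z)$ and $(U,W,V)=(X,Y,\varphi Z)$.

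Next I would expand the two forms on the right using, with Blair's normalization,
\[
3d\Phi(X,Y,Z)=X\Phi(Y,Z)-Y\Phi(X,Z)+Z\Phi(X,Y)-\Phi([X,Y],Z)+\Phi([X,Z],Y)-\Phi([Y,Z],X)
\]
and the analogue with $Y,Z$ replaced by $\varphi Y,\varphi Z$, remembering that $\Phi(U,V)=g(U,\varphi V)$ and simplifying every occurrence of $\varphi^{2}$ by $\varphi^{2}=-I+\eta^{\alpha}\otimes\xi_{\alpha}$. The key step is to form the difference $3d\Phi(X,\varphi Y,\varphi Z)-3d\Phi(X,Y,Z)$ and match it against the Koszul expansion of $2g((\nabla_{X}\varphi)Y,Z)$: the derivative-of-metric terms cancel, and the surviving Lie-bracket terms reassemble into the Nijenhuis combination. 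Recalling $N=N_{\varphi}+2\,d\eta^{\alpha}\otimes\xi_{\alpha}$, these produce $g(N(Y,Z),\varphi X)$ together with the terms carrying $\eta^{\alpha}$.

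Throughout, whenever a factor $g(\,\cdot\,,\xi_{\alpha})$ appears --- either from the $\eta^{\alpha}\otimes\xi_{\alpha}$ part of $\varphi^{2}$ or from contracting a bracket against $\xi_{\alpha}$ --- I convert it to $\eta^{\alpha}$ via $\eta^{\alpha}(\,\cdot\,)=\varepsilon_{\alpha}g(\,\cdot\,,\xi_{\alpha})$ from (\ref{eqz01}), and it is exactly this substitution that inserts each $\varepsilon_{\alpha}$. Collecting the resulting $d\eta^{\alpha}$ contributions and using the definition $N^{(2)}_{\alpha}(X,Y)=2d\eta^{\alpha}(\varphi X,Y)-2d\eta^{\alpha}(\varphi Y,X)$ yields the term $\varepsilon_{\alpha}N^{(2)}_{\alpha}(Y,Z)\eta^{\alpha}(X)$ and the two boundary terms $\pm2\varepsilon_{\alpha}d\eta^{\alpha}(\varphi\,\cdot\,,X)\eta^{\alpha}(\,\cdot\,)$.

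The main obstacle is purely one of bookkeeping: organizing the large number of bracket and $\eta^{\alpha}$ terms so that they collapse into the compact right-hand side, and checking that every $\varepsilon_{\alpha}$ lands in the correct place. In the Riemannian proof all $\varepsilon_{\alpha}=1$ and this is invisible; here the one point that requires genuine care is to verify that the conversions $g(\,\cdot\,,\xi_{\alpha})=\varepsilon_{\alpha}\eta^{\alpha}(\,\cdot\,)$ reproduce precisely the displayed coefficients, in particular that the collected $d\eta^{\alpha}$ terms fit the stated definition of $N^{(2)}_{\alpha}$.
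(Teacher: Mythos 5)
Your proposal is correct and is essentially the paper's own argument: the paper gives no explicit proof, only the remark that the formula follows ``with a standard computation as in the Riemannian setting (\cite{Bl1})'', which is precisely your computation --- split $(\nabla_X\varphi)Y$ via skew-symmetry, apply the Koszul formula, expand $3d\Phi$ intrinsically, and insert each $\varepsilon_{\alpha}$ through the conversion $g(\,\cdot\,,\xi_{\alpha})=\varepsilon_{\alpha}\eta^{\alpha}(\,\cdot\,)$. Your conventions (the Koszul formula, the intrinsic formula for $3d\Phi$, and the factor $\tfrac{1}{2}$ in $d\eta^{\alpha}$) agree with those the paper uses elsewhere, e.g.\ in $d\eta^{\gamma}(\xi_{\alpha},\xi_{\beta})=-\tfrac{1}{2}\eta^{\gamma}[\xi_{\alpha},\xi_{\beta}]$, so the bookkeeping closes as you describe.
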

\begin{proposition}\label{Nphi3}
Let $(M,\varphi,\xi_{\alpha},\eta^{\alpha},g)$ be an indefinite metric $g.f.f$-manifold. Then the following statements hold:
\begin{enumerate}
\item[a)]$(\mathcal{L}_{\xi_{\alpha}}\Phi)(X,Y)=(\mathcal{L}_{\xi_{\alpha}}g)(X,\varphi Y)+g(X,(\mathcal{L}_{\xi_{\alpha}}\varphi)Y)$, for any $\alpha\in\{  1,\ldots,r\}  $.
\item[b)] $(\nabla_{X}\Phi)(Y,Z)=g(Y,(\nabla_{X}\varphi)Z)$, for any $X,Y,Z\in\Gamma(TM)$.
\item[c)] If $\mathcal{L}_{\xi_{\alpha}}\varphi=0$, then $\eta^{\beta}[\varphi Z,\xi_{\alpha}]=0$, for any $\beta\in\{  1,\ldots,r\}$.
\item[d)] $N=0\Rightarrow N^{(2)}_{\alpha}=0$, for any $\alpha\in\{  1,\ldots,r\}  $.
\end{enumerate}
\end{proposition}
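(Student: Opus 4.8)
The plan is to handle the four items in increasing order of difficulty, using throughout two elementary consequences of the structure equations. Since $\xi_{\alpha}\in\ker\varphi$ we have $\varphi\xi_{\alpha}=0$, and combining the skew-symmetry of $\varphi$ with the second relation in \eqref{eqz01}, namely $\eta^{\alpha}(X)=\varepsilon_{\alpha}g(X,\xi_{\alpha})$, gives $\eta^{\alpha}(\varphi X)=\varepsilon_{\alpha}g(\varphi X,\xi_{\alpha})=-\varepsilon_{\alpha}g(X,\varphi\xi_{\alpha})=0$, i.e. $\eta^{\alpha}\circ\varphi=0$. From $\varphi^{2}=-I+\eta^{\gamma}\otimes\xi_{\gamma}$ one also gets $\eta^{\alpha}(\varphi^{2}X)=-\eta^{\alpha}(X)+\eta^{\gamma}(X)\eta^{\alpha}(\xi_{\gamma})=0$. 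These three facts drive everything.

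For a) and b) the idea is simply to expand both sides by the Leibniz rule. In a), writing $\Phi(X,Y)=g(X,\varphi Y)$ and applying the derivation property of $\mathcal{L}_{\xi_{\alpha}}$ to this product, the term $\xi_{\alpha}(g(X,\varphi Y))$ together with the bracket terms coming from $g$ reassemble exactly into $(\mathcal{L}_{\xi_{\alpha}}g)(X,\varphi Y)$ plus $g(X,(\mathcal{L}_{\xi_{\alpha}}\varphi)Y)$, the mixed contribution $g(X,[\xi_{\alpha},\varphi Y])$ cancelling. In b), one differentiates $\Phi(Y,Z)=g(Y,\varphi Z)$ covariantly and uses that the Levi-Civita connection is metric, $\nabla g=0$; the identity $(\nabla_{X}\varphi)Z=\nabla_{X}(\varphi Z)-\varphi\nabla_{X}Z$ then yields $(\nabla_{X}\Phi)(Y,Z)=g(Y,(\nabla_{X}\varphi)Z)$ at once. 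For c), the hypothesis $\mathcal{L}_{\xi_{\alpha}}\varphi=0$ reads $[\xi_{\alpha},\varphi Z]=\varphi[\xi_{\alpha},Z]$ for every $Z$; hence $[\varphi Z,\xi_{\alpha}]=-\varphi[\xi_{\alpha},Z]=\varphi[Z,\xi_{\alpha}]\in\operatorname{Im}\varphi$, and applying $\eta^{\beta}$ together with $\eta^{\beta}\circ\varphi=0$ gives $\eta^{\beta}[\varphi Z,\xi_{\alpha}]=0$.

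The substantive part is d), where the real work is the Nijenhuis bookkeeping. The key observation is that, because $\eta^{\alpha}$ vanishes on $\operatorname{Im}\varphi$ and satisfies $\eta^{\alpha}\circ\varphi^{2}=0$, contracting the Nijenhuis torsion with $\eta^{\alpha}$ collapses to a single surviving term, $\eta^{\alpha}(N_{\varphi}(X,Y))=\eta^{\alpha}([\varphi X,\varphi Y])$. I would then exploit $N=0$ through the relation $0=\eta^{\alpha}(N(X,Y))=\eta^{\alpha}([\varphi X,\varphi Y])+2d\eta^{\alpha}(X,Y)$ by two specializations. First, putting one argument equal to $\xi_{\alpha}$ and using $\varphi\xi_{\alpha}=0$ forces $\eta^{\alpha}([\varphi\xi_{\alpha},\varphi X])=0$, whence $2d\eta^{\beta}(\xi_{\alpha},X)=(\mathcal{L}_{\xi_{\alpha}}\eta^{\beta})(X)=0$; thus $d\eta^{\beta}$ vanishes whenever one slot is a characteristic vector field. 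Second, rewriting $\eta^{\alpha}([\varphi X,\varphi Y])=-2d\eta^{\alpha}(\varphi X,\varphi Y)$ (again via $\eta^{\alpha}\circ\varphi=0$) turns $N=0$ into the $\varphi$-invariance $d\eta^{\alpha}(\varphi X,\varphi Y)=d\eta^{\alpha}(X,Y)$.

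To close d), I would substitute $Y\mapsto\varphi Y$ into this last identity and expand $\varphi^{2}Y=-Y+\eta^{\delta}(Y)\xi_{\delta}$; the resulting term $\eta^{\delta}(Y)\,d\eta^{\alpha}(\varphi X,\xi_{\delta})$ vanishes by the first specialization, leaving $d\eta^{\alpha}(\varphi X,Y)=d\eta^{\alpha}(\varphi Y,X)$, which is precisely $N^{(2)}_{\alpha}(X,Y)=0$. The delicate point, and the one I expect to be the main obstacle, is recognizing that the auxiliary identity $\mathcal{L}_{\xi_{\alpha}}\eta^{\beta}=0$ must be extracted first, since it is exactly what annihilates the spurious $d\eta^{\alpha}(\varphi X,\xi_{\delta})$ term. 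The indefinite signature enters only through the sign $\varepsilon_{\alpha}$ in $\eta^{\alpha}(X)=\varepsilon_{\alpha}g(X,\xi_{\alpha})$, which does not affect any of these contractions, so the Riemannian computation of \cite{Bl1} carries over with no essential change.
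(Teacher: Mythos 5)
Your proposal is correct and takes essentially the same route the paper intends: the paper states Proposition \ref{Nphi3} without proof, appealing to the ``standard computation as in the Riemannian setting'' of \cite{Bl1}, and your argument is exactly that computation carried over to the indefinite case. The Leibniz expansions for a) and b), the observation $\eta^{\beta}\circ\varphi=0$ for c), and for d) the contraction $\eta^{\beta}(N(X,Y))=\eta^{\beta}([\varphi X,\varphi Y])+2d\eta^{\beta}(X,Y)$ followed by the two specializations (first $d\eta^{\beta}(\xi_{\alpha},\cdot)=0$, then $d\eta^{\alpha}(\varphi X,\varphi Y)=d\eta^{\alpha}(X,Y)$ with $Y\mapsto\varphi Y$) are precisely the standard steps, and the signs $\varepsilon_{\alpha}$ indeed play no role in any of them.
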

Between the indefinite metric $g.f.f$-manifolds, we can define the following classes.
\begin{definition}\emph{
Let $(M^{2n+r},\varphi,\xi_{\alpha},\eta^{\alpha},g)$ be an indefinite metric $g.f.f$-manifold. $M$ is
called {\it indefinite $\mathcal{K}$-manifold} if it is normal and $d\Phi=0$.}
\end{definition}
In this case $\mathcal{L}_{{\xi}_{\alpha}}\Phi=i_{{\xi}_{\alpha}}d\Phi+di_{{\xi}_{\alpha}}\Phi=0$, therefore, from a) of Proposition \ref{Nphi3}, we obtain that $\mathcal{L}_{\xi_{\alpha}}\varphi=0$ if and only if the characteristic vector fields $\xi_{\alpha}$ are Killing.
Two subclasses of indefinite $\mathcal{K}$-manifolds are those of indefinite $\mathcal{C}$-manifolds and indefinite $\mathcal{S}$-manifolds, that are defined as follows: an indefinite $\mathcal{K}$-manifold is
called {\it indefinite $\mathcal{C}$-manifold} if $d\eta^{\alpha}=0$ for any
$\alpha\in\{1,\ldots,r\}$, while it is
called {\it indefinite $\mathcal{S}$-manifold} if $d\eta^{\alpha}=\Phi$ for any
$\alpha\in\{1,\ldots,r\}$. 

\section{Indefinite $\mathcal{S}$-manifolds}

The properties of (almost) $\mathcal{S}$-manifolds (with Riemannian metric) are studied in \cite{DIP} and in \cite{Bl1}. Now, we discuss indefinite (almost) $\mathcal{S}$-manifolds and their properties.

\subsection{Indefinite almost $\mathcal{S}$-manifolds}

\begin{definition}\emph{
Let $(
M^{2n+r},\varphi,\xi_{\alpha},\eta^{\alpha},
g)$ be an indefinite metric $g.f.f$-manifold. $M$ is
called {\it indefinite almost $\mathcal{S}$-manifold} if $d\eta^{\alpha}=\Phi$ for any}
$\alpha\in\{1,\ldots,r\}.$
\end{definition}
\begin{lemma}
\label{LemNalfa} Let $(
M,\varphi,\xi_{\alpha},\eta^{\alpha},g)$
be an indefinite almost $\mathcal{S}$-manifold. Then the tensor fields
$N_{\alpha}^{(2)}$ vanish and for any $X,Y\in\Gamma(\mathfrak{D})$ and
$\alpha\in\{1,\ldots,r\}$, we have
\[ \eta^{\alpha}[\varphi X,Y]  =\eta^{\alpha
}[\varphi Y,X]\]
\end{lemma}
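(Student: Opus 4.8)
The plan is to derive both assertions directly from the defining relation $d\eta^\alpha = \Phi$ together with the compatibility conditions (\ref{eqz01}), exploiting the closed form $N_\alpha^{(2)}(X,Y) = 2\,d\eta^\alpha(\varphi X, Y) - 2\,d\eta^\alpha(\varphi Y, X)$ recalled just after the Levi-Civita formula.

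First I would show that $N_\alpha^{(2)}$ vanishes identically. Substituting $d\eta^\alpha = \Phi$ into that expression gives $N_\alpha^{(2)}(X,Y) = 2\Phi(\varphi X, Y) - 2\Phi(\varphi Y, X)$. Writing $\Phi(\varphi X, Y) = g(\varphi X, \varphi Y)$ and applying the first relation in (\ref{eqz01}), I obtain $\Phi(\varphi X, Y) = g(X,Y) - \varepsilon_\beta \eta^\beta(X)\eta^\beta(Y)$, an expression that is symmetric in $X$ and $Y$. Hence $\Phi(\varphi X, Y) = \Phi(\varphi Y, X)$, the two terms cancel, and $N_\alpha^{(2)} = 0$ on all of $\Gamma(TM)$.

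For the bracket identity I would return to the Lie-derivative definition $N_\alpha^{(2)}(X,Y) = (\mathcal{L}_{\varphi X}\eta^\alpha)(Y) - (\mathcal{L}_{\varphi Y}\eta^\alpha)(X)$ and expand each term via $(\mathcal{L}_Z\omega)(W) = Z(\omega(W)) - \omega([Z,W])$. The key observation is that $\eta^\alpha$ annihilates $\mathfrak{D} = \operatorname{Im}\varphi$: for any $Z \in \Gamma(TM)$ one has $\eta^\alpha(\varphi Z) = \varepsilon_\alpha g(\xi_\alpha, \varphi Z) = -\varepsilon_\alpha g(\varphi\xi_\alpha, Z) = 0$, since $\xi_\alpha \in \ker\varphi$. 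Thus, for $X, Y \in \Gamma(\mathfrak{D})$, the functions $\eta^\alpha(X)$ and $\eta^\alpha(Y)$ vanish identically, the derivative terms $(\varphi X)(\eta^\alpha(Y))$ and $(\varphi Y)(\eta^\alpha(X))$ drop out, and the expansion collapses to $N_\alpha^{(2)}(X,Y) = -\eta^\alpha[\varphi X, Y] + \eta^\alpha[\varphi Y, X]$. Combining this with the already-established vanishing $N_\alpha^{(2)} = 0$ yields precisely $\eta^\alpha[\varphi X, Y] = \eta^\alpha[\varphi Y, X]$.

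The arguments are routine manipulations and no serious obstacle is anticipated. The only point requiring care is the vanishing of $\eta^\alpha$ on $\mathfrak{D}$, which eliminates the non-tensorial derivative terms in the Lie derivative and lets the desired bracket identity be read off from the tensorial identity $N_\alpha^{(2)} = 0$.
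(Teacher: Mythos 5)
Your proof is correct and takes essentially the same route as the paper: the vanishing of $N_{\alpha}^{(2)}$ follows from $d\eta^{\alpha}=\Phi$ together with the symmetry of $\Phi(\varphi X,Y)=g(\varphi X,\varphi Y)$ in $X$ and $Y$, and the bracket identity follows by expanding $N_{\alpha}^{(2)}$ on $\mathfrak{D}$, where $\eta^{\alpha}$ vanishes. The only cosmetic difference is that you expand the Lie-derivative form of $N_{\alpha}^{(2)}$ while the paper uses the equivalent exterior-derivative form via $2d\eta^{\alpha}(\varphi X,Y)=-\eta^{\alpha}([\varphi X,Y])$, which is the same computation.
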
%
\begin{proof}
For $\alpha\in\{
1,\ldots,r\}$, we have $N_{\alpha}^{(2)}(X,Y)=2d\eta^{\alpha
}(\varphi X,Y)  -2d\eta^{\alpha}(
\varphi Y,X) =2\Phi(\varphi X,Y)  -2\Phi(\varphi Y,X)=0$. Then, for any $X,Y\in\Gamma(\mathfrak{D})$, $2d\eta^{\alpha
}(\varphi X,Y)=-\eta^{\alpha}([\varphi X,Y])$ implies $\eta^{\alpha}[\varphi X,Y]  =\eta^{\alpha
}[\varphi Y,X]$.
\end{proof}
\begin{proposition}
\label{prop condizioniS}
Let $(M,\varphi,\xi_{\alpha},\eta^{\alpha}%
,g)$ be an indefinite almost $\mathcal{S}$-manifold and $\bar{\eta}:=\sum_{\alpha=1}^{r}\varepsilon_{\alpha}\eta^{\alpha}
$. Then, the
following statements hold:
\begin{align}
2g((\nabla_{X}\varphi)Y,Z)   &
=g(N(Y,Z),\varphi X)  +2g(
\varphi Y,\varphi X)  \bar{\eta}(Z) -2g(\varphi Z,\varphi X)
\bar{\eta}(Y),\label{eq00} 
\end{align}
\begin{equation}
\nabla_{\xi_{\alpha}}\varphi=0\quad\quad\textrm{for all}\;\;\alpha
\in\{1,\ldots,r\}, \qquad \nabla_{\xi_{\alpha}}\xi_{\beta}=0\quad\quad\textrm{for all}\;\;
\alpha,\beta\in\{1,\ldots,r\}.\label{eq01}%
\end{equation}
\end{proposition}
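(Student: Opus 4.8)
The plan is to specialize the general Levi-Civita identity (\ref{eq-01}) to the indefinite almost $\mathcal{S}$ setting. First I would observe that $d\eta^{\alpha}=\Phi$ together with $d^{2}=0$ forces $d\Phi=d(d\eta^{\alpha})=0$, so the two terms $3d\Phi(X,\varphi Y,\varphi Z)-3d\Phi(X,Y,Z)$ in (\ref{eq-01}) drop out. By Lemma \ref{LemNalfa} we have $N_{\alpha}^{(2)}=0$, which kills the term $\varepsilon_{\alpha}N_{\alpha}^{(2)}(Y,Z)\eta^{\alpha}(X)$. In the two surviving $d\eta^{\alpha}$-terms I would replace $d\eta^{\alpha}$ by $\Phi$ and use $\Phi(\varphi Y,X)=g(\varphi Y,\varphi X)$, which follows from the definition $\Phi(A,B)=g(A,\varphi B)$. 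Since the resulting factor $g(\varphi Y,\varphi X)$ is independent of $\alpha$, the Einstein-summed $\varepsilon_{\alpha}\eta^{\alpha}(Z)$ collapses to $\bar{\eta}(Z)$, and likewise the other term produces $\bar{\eta}(Y)$; this yields (\ref{eq00}). The only thing to watch here is the index bookkeeping when factoring $g(\varphi Y,\varphi X)$ out of the sum $\varepsilon_{\alpha}\eta^{\alpha}$.

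For $\nabla_{\xi_{\alpha}}\varphi=0$ I would set $X=\xi_{\alpha}$ in (\ref{eq00}). Because $\varphi\xi_{\alpha}=0$, every term on the right-hand side carries a factor $\varphi\xi_{\alpha}$ (namely $g(N(Y,Z),\varphi\xi_{\alpha})$, $g(\varphi Y,\varphi\xi_{\alpha})$ and $g(\varphi Z,\varphi\xi_{\alpha})$) and hence vanishes. Thus $g((\nabla_{\xi_{\alpha}}\varphi)Y,Z)=0$ for all $Y,Z$, and non-degeneracy of $g$ gives $(\nabla_{\xi_{\alpha}}\varphi)Y=0$.

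For $\nabla_{\xi_{\alpha}}\xi_{\beta}=0$ I would proceed in two steps. Differentiating $\varphi\xi_{\beta}=0$ along $\xi_{\alpha}$ and using the just-proved $\nabla_{\xi_{\alpha}}\varphi=0$ gives $\varphi(\nabla_{\xi_{\alpha}}\xi_{\beta})=0$, so $\nabla_{\xi_{\alpha}}\xi_{\beta}\in\ker\varphi$ and can be written as $\nabla_{\xi_{\alpha}}\xi_{\beta}=\eta^{\gamma}(\nabla_{\xi_{\alpha}}\xi_{\beta})\,\xi_{\gamma}$. It then remains to show the components $\eta^{\gamma}(\nabla_{\xi_{\alpha}}\xi_{\beta})=\varepsilon_{\gamma}g(\nabla_{\xi_{\alpha}}\xi_{\beta},\xi_{\gamma})$ vanish. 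Here I would apply the Koszul formula to $g(\nabla_{\xi_{\alpha}}\xi_{\beta},\xi_{\gamma})$: the three derivative terms vanish because $g(\xi_{i},\xi_{j})=\varepsilon_{j}\delta_{ij}$ is constant, leaving only bracket terms of the form $g([\xi_{\alpha},\xi_{\beta}],\xi_{\gamma})=\varepsilon_{\gamma}\eta^{\gamma}([\xi_{\alpha},\xi_{\beta}])$. These vanish since $d\eta^{\gamma}=\Phi$ gives $\eta^{\gamma}([\xi_{\alpha},\xi_{\beta}])=-2\,d\eta^{\gamma}(\xi_{\alpha},\xi_{\beta})=-2\,\Phi(\xi_{\alpha},\xi_{\beta})=0$, using $\varphi\xi_{\beta}=0$. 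Hence $g(\nabla_{\xi_{\alpha}}\xi_{\beta},\xi_{\gamma})=0$ for every $\gamma$, and therefore $\nabla_{\xi_{\alpha}}\xi_{\beta}=0$.

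I expect the only real subtlety to be establishing $\eta^{\gamma}([\xi_{\alpha},\xi_{\beta}])=0$, since this is precisely where the defining hypothesis $d\eta^{\gamma}=\Phi$ enters and where the normalization of the exterior derivative must be kept consistent with the convention used elsewhere in the paper. Conceptually, the coefficients $g(\nabla_{\xi_{\alpha}}\xi_{\beta},\xi_{\gamma})$ are symmetric in $\alpha,\beta$ by torsion-freeness combined with this bracket identity, and skew in $\beta,\gamma$ by metric compatibility, and such a system must be identically zero; once the bracket identity is in hand the Koszul formula collapses and the conclusion is immediate. Everything else is routine substitution into the already-established identity (\ref{eq-01}).
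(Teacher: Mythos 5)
Your proposal is correct. For (\ref{eq00}) and for $\nabla_{\xi_{\alpha}}\varphi=0$ you follow exactly the paper's route: specialize (\ref{eq-01}) using $d\Phi=0$, $N^{(2)}_{\alpha}=0$ (Lemma \ref{LemNalfa}) and $d\eta^{\alpha}=\Phi$, then put $X=\xi_{\alpha}$ and use $\varphi\xi_{\alpha}=0$ together with non-degeneracy of $g$. Where you genuinely diverge is the proof of $\nabla_{\xi_{\alpha}}\xi_{\beta}=0$. The paper argues in two stages: it first shows $[\xi_{\alpha},\xi_{\beta}]\in\mathfrak{D}\cap\mathfrak{D}^{\bot}=\{0\}$ (using $\nabla_{\xi_{\alpha}}\xi_{\beta}\in\ker\varphi$ on one side and $\eta^{\gamma}[\xi_{\alpha},\xi_{\beta}]=-2d\eta^{\gamma}(\xi_{\alpha},\xi_{\beta})=-2\Phi(\xi_{\alpha},\xi_{\beta})=0$ on the other), hence $\nabla_{\xi_{\alpha}}\xi_{\beta}=\nabla_{\xi_{\beta}}\xi_{\alpha}$, and then runs an explicit cyclic chain of metric-compatibility identities to force $g(\nabla_{\xi_{\alpha}}\xi_{\beta},\xi_{\gamma})=0$; that chain is precisely the ``symmetric in $\alpha,\beta$, skew in $\beta,\gamma$, hence zero'' argument you mention at the end. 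You instead compress both stages into a single application of the Koszul formula, where the derivative terms die because $g(\xi_{i},\xi_{j})=\varepsilon_{j}\delta_{ij}$ is constant and all three bracket terms die by the same identity $\eta^{k}([\xi_{i},\xi_{j}])=-2\Phi(\xi_{i},\xi_{j})=0$; your sign conventions for $d$ are consistent with the paper's. The inputs are identical, so the two proofs are equivalent in substance, but yours is shorter and makes transparent that only two facts are needed, while the paper's version isolates the intermediate statement $[\xi_{\alpha},\xi_{\beta}]=0$ explicitly, which it reuses later (e.g.\ in Proposition \ref{ker}, for the integrability and flatness of $\ker\varphi$); with your route that fact still follows, but only a posteriori from $\nabla_{\xi_{\alpha}}\xi_{\beta}=\nabla_{\xi_{\beta}}\xi_{\alpha}=0$ and torsion-freeness.
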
%
 \begin{proof} Equation (\ref{eq00}) follows from (\ref{eq-01})
using $d\Phi=0$, $N_{\alpha}^{(2)
}=0$ and $d\eta
^{\alpha}=\Phi$, for $\alpha\in\{1,\ldots,r\}$.
 Then, putting $X=\xi_{\alpha}$, we obtain $\nabla_{\xi
_{\alpha}}\varphi=0$.

Hence, we have $0=(\nabla_{\xi_{\alpha}}\varphi)  (
\xi_{\beta})  =-\varphi(\nabla_{\xi_{\alpha}}\xi_{\beta})$  ,
therefore $\nabla_{\xi_{\alpha}}\xi_{\beta}\in\mathfrak{D}^{\bot}$, which implies that $[\xi_{\alpha},\xi_{\beta}]
\in\mathfrak{D}^{\bot}$.
On the other hand, for any $\gamma\in\{  1,\ldots,r\}  $%
\begin{align*}
0  & =\Phi(\xi_{\alpha},\xi_{\beta})  =d\eta
^{\gamma}(\xi_{\alpha},\xi_{\beta}) =-\frac{1}{2}\eta^{\gamma}[  \xi_{\alpha},\xi
_{\beta}]  =-\frac{1}{2}\varepsilon_{\gamma}g( [
\xi_{\alpha},\xi_{\beta}]  ,\xi_{\gamma})  .
\end{align*}
Therefore $[\xi_{\alpha},\xi_{\beta}]\in\mathfrak{D}\cap\mathfrak{D}^{\bot}$ and we obtain $[{\xi
}_{\alpha},{\xi}_{\beta}]  =0$ and ${\nabla}_{{\xi}_{\alpha
}}{\xi}_{\beta}={\nabla}_{{\xi}_{\beta}}{\xi}_{\alpha}$.
Now we check that ${\nabla}_{{\xi}_{\alpha}}{\xi}_{\beta}%
\in\mathfrak{D}$, that is, for any $\gamma\in\{
1,\ldots,r\}$, $g(  {\nabla}_{{\xi}_{\alpha}}{\xi}_{\beta},{\xi
}_{\gamma})  =0$.
Being ${g}({\xi}_{\beta},{\xi}_{\gamma})
=\varepsilon_{\beta}\delta_{\beta\gamma}$ and using the covariant derivative with respect to ${\xi}_{\alpha}$, we find
${g}(  {\nabla}_{{\xi}_{\alpha}}{\xi}_{\beta},{\xi
}_{\gamma})  +{g}(  {\xi}_{\beta},{\nabla}_{{\xi
}_{\alpha}}{\xi}_{\gamma})  =0$,
 and, covariantly differentiating ${g}({\xi}_{\alpha},{\xi}_{\gamma})
=\varepsilon_{\alpha}\delta_{\alpha\gamma}$ with respect to ${\xi}_{\beta}$,
we obtain ${g}(  {\nabla}_{{\xi}_{\beta}}{\xi}_{\alpha},{\xi
}_{\gamma})  +{g}(  {\xi}_{\alpha},{\nabla}_{{\xi
}_{\beta}}{\xi}_{\gamma})  =0$.
From the last two equations, using ${\nabla}_{{\xi
}_{\alpha}}{\xi}_{\beta}={\nabla}_{{\xi}_{\beta}}{\xi}%
_{\alpha}$, we have ${g}(  {\xi}_{\beta},{\nabla}_{{\xi}_{\alpha}}{\xi
}_{\gamma})  ={g}(  {\xi}_{\alpha},{\nabla}_{{\xi
}_{\beta}}{\xi}_{\gamma})$.
Therefore, ${g}(  {\nabla}_{{\xi}_{\alpha}}{\xi}_{\beta},{\xi
}_{\gamma})={g}(  {\xi}_{\alpha},{\nabla}%
_{{\xi}_{\gamma}}{\xi}_{\beta})  ={g}(  {\xi
}_{\alpha},{\nabla}_{{\xi}_{\beta}}{\xi}_{\gamma})=-{g}(  {\nabla}_{{\xi}_{\beta}}{\xi}_{\alpha},
{\xi}_{\gamma})  =-{g}(  {\nabla}_{{\xi}_{\alpha}}%
{\xi}_{\beta},{\xi}_{\gamma})$,
from which ${g}(  {\nabla}_{{\xi}_{\alpha}}{\xi}_{\beta
},{\xi}_{\gamma})  =0$ follows. This result and $\nabla_{\xi_{\alpha}}\xi_{\beta}\in\mathfrak{D}^{\bot}$ imply  ${\nabla}_{{\xi}_{\alpha}}{\xi
}_{\beta}=0.$%
\end{proof}
\begin{proposition}
Let $({M},{\varphi
},{\xi}_{\alpha},{\eta}^{\alpha},{g})  $ be an indefinite
almost $\mathcal{S}$-manifold. Then
\begin{enumerate}
\item[a)] for any $\alpha\in\{  1,\ldots,r\}$ the operator
$h_{\alpha}=\frac{1}{2}\mathcal{L}_{{\xi}_{\alpha}}{\varphi}$ is self-adjoint,
\item[b)] for any $\alpha,\beta\in\{  1,\ldots,r\}  $, $h_{\alpha
}({\xi}_{\beta})  =0$,
\item[c)] for any $\alpha\in\{  1,\ldots,r\}$, $h_{\alpha
} \circ \varphi+ \varphi \circ h_{\alpha
}=0$.  
\end{enumerate}
\end{proposition}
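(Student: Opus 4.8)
The plan is to work throughout with the Levi-Civita connection and to lean on the two facts already established in Proposition \ref{prop condizioniS}, namely $\nabla_{\xi_\alpha}\varphi=0$ and $[\xi_\alpha,\xi_\beta]=\nabla_{\xi_\alpha}\xi_\beta=0$. First I would convert the defining Lie derivative into connection form: since $\nabla$ is torsion-free, $(\mathcal{L}_{\xi_\alpha}\varphi)X=(\nabla_{\xi_\alpha}\varphi)X-\nabla_{\varphi X}\xi_\alpha+\varphi\nabla_X\xi_\alpha$, so $\nabla_{\xi_\alpha}\varphi=0$ yields the pointwise identity $2h_\alpha X=\varphi\nabla_X\xi_\alpha-\nabla_{\varphi X}\xi_\alpha$. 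Alongside it I would record the antisymmetric part of $\nabla\xi_\alpha$: writing $\eta^\alpha=\varepsilon_\alpha g(\,\cdot\,,\xi_\alpha)$, using $2d\eta^\alpha(X,Y)=(\nabla_X\eta^\alpha)(Y)-(\nabla_Y\eta^\alpha)(X)$ together with $d\eta^\alpha=\Phi$, one obtains $g(\nabla_X\xi_\alpha,Y)-g(\nabla_Y\xi_\alpha,X)=2\varepsilon_\alpha g(X,\varphi Y)$. These two are the only analytic inputs; everything else is algebra governed by the skew-symmetry of $\varphi$ and by $\varphi^2=-I+\eta^\beta\otimes\xi_\beta$.

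I would dispatch b) directly from the Lie derivative: $2h_\alpha\xi_\beta=[\xi_\alpha,\varphi\xi_\beta]-\varphi[\xi_\alpha,\xi_\beta]$, and both brackets vanish because $\varphi\xi_\beta=0$ and $[\xi_\alpha,\xi_\beta]=0$. For c) the cleanest route is to apply $\mathcal{L}_{\xi_\alpha}$ to the structural relation $\varphi^2=-I+\eta^\beta\otimes\xi_\beta$. On the left, the Leibniz rule for the Lie derivative of a composition of $(1,1)$-tensors gives $\mathcal{L}_{\xi_\alpha}(\varphi^2)=(\mathcal{L}_{\xi_\alpha}\varphi)\circ\varphi+\varphi\circ(\mathcal{L}_{\xi_\alpha}\varphi)=2(h_\alpha\circ\varphi+\varphi\circ h_\alpha)$. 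On the right, $\mathcal{L}_{\xi_\alpha}I=0$ and $\mathcal{L}_{\xi_\alpha}\xi_\beta=[\xi_\alpha,\xi_\beta]=0$, while Cartan's formula yields $\mathcal{L}_{\xi_\alpha}\eta^\beta=i_{\xi_\alpha}d\eta^\beta+d\,i_{\xi_\alpha}\eta^\beta=i_{\xi_\alpha}\Phi$, which vanishes since $\Phi(\xi_\alpha,X)=g(\xi_\alpha,\varphi X)=-g(\varphi\xi_\alpha,X)=0$. Hence the right-hand side is zero and $h_\alpha\circ\varphi+\varphi\circ h_\alpha=0$.

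Part a) is where the real work lies, and it is the step I expect to be the main obstacle. Using $2h_\alpha X=\varphi\nabla_X\xi_\alpha-\nabla_{\varphi X}\xi_\alpha$ and the skew-symmetry of $\varphi$, I would expand $2\big(g(h_\alpha X,Y)-g(X,h_\alpha Y)\big)$ into four terms of the form $g(\nabla_\bullet\xi_\alpha,\bullet)$, regroup them into the two differences $g(\nabla_X\xi_\alpha,\varphi Y)-g(\nabla_{\varphi Y}\xi_\alpha,X)$ and $g(\nabla_{\varphi X}\xi_\alpha,Y)-g(\nabla_Y\xi_\alpha,\varphi X)$, and substitute the antisymmetric identity. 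This collapses the whole expression to a multiple of $g(X,\varphi^2Y)+g(\varphi X,\varphi Y)$, which is zero because $g(X,\varphi^2Y)=-g(\varphi X,\varphi Y)$; hence $h_\alpha$ is self-adjoint. The difficulty is purely in the sign bookkeeping, since a single mismatched sign in the four-term expansion silently destroys the cancellation, so I would fix the conventions $\Phi(X,Y)=g(X,\varphi Y)$ and $2d\eta^\alpha(X,Y)=(\nabla_X\eta^\alpha)(Y)-(\nabla_Y\eta^\alpha)(X)$ once and for all before beginning. As a cross-check one may instead observe that $\mathcal{L}_{\xi_\alpha}\Phi=0$, because $\Phi=d\eta^\alpha$ is exact and $i_{\xi_\alpha}\Phi=0$, and feed this into part a) of Proposition \ref{Nphi3}; that reduces self-adjointness to the symmetry of the form $(X,Y)\mapsto(\mathcal{L}_{\xi_\alpha}g)(X,\varphi Y)$, which is settled by the very same antisymmetric identity.
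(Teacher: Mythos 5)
Your main argument is correct, and for part a) it takes a genuinely different route from the paper; parts b) and c) coincide in substance with the paper's proof. For a), the paper derives from (\ref{eq01}) the formula
$g((\mathcal{L}_{\xi_{\alpha}}\varphi)X,Y)=\varepsilon_{\alpha}(-(\varphi X)(\eta^{\alpha}(Y))+\eta^{\alpha}(\nabla_{\varphi X}Y+\nabla_{X}(\varphi Y)))$
and recognizes the antisymmetrized expression as $-\varepsilon_{\alpha}N_{\alpha}^{(2)}(X,Y)$, which vanishes by Lemma \ref{LemNalfa}; you instead encode $d\eta^{\alpha}=\Phi$ as the skew part of $(U,V)\mapsto g(\nabla_{U}\xi_{\alpha},V)$, namely $g(\nabla_{U}\xi_{\alpha},V)-g(\nabla_{V}\xi_{\alpha},U)=2\varepsilon_{\alpha}g(U,\varphi V)$, and combine it with $2h_{\alpha}X=\varphi\nabla_{X}\xi_{\alpha}-\nabla_{\varphi X}\xi_{\alpha}$, which follows from torsion-freeness and $\nabla_{\xi_{\alpha}}\varphi=0$. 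Both proofs consume exactly the same hypotheses ($d\eta^{\alpha}=\Phi$ plus Proposition \ref{prop condizioniS}), but yours works entirely at the level of $\nabla\xi_{\alpha}$, which has the merit of prefiguring Proposition \ref{nablaxi} ($\nabla_{X}\xi_{\alpha}=-\varepsilon_{\alpha}\varphi X-\varphi h_{\alpha}X$), whereas the paper stays in Lie-derivative language and leans on its already-proved lemma. Your c) — Lie-differentiating $\varphi^{2}=-I+\eta^{\beta}\otimes\xi_{\beta}$ — is the paper's computation organized top-down, with the added virtue of making explicit, via Cartan's formula, the vanishing of $\mathcal{L}_{\xi_{\alpha}}\eta^{\beta}$ that the paper leaves implicit.

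One caveat: the closing ``cross-check'' for a) does not actually close. Granting $\mathcal{L}_{\xi_{\alpha}}\Phi=0$ and Proposition \ref{Nphi3} a), self-adjointness of $h_{\alpha}$ is indeed equivalent to the symmetry of $(X,Y)\mapsto(\mathcal{L}_{\xi_{\alpha}}g)(X,\varphi Y)$, but that symmetry is \emph{not} settled by the antisymmetric identity. Writing $A=g(\nabla_{X}\xi_{\alpha},\varphi Y)$, $B=g(\nabla_{\varphi Y}\xi_{\alpha},X)$, $C=g(\nabla_{Y}\xi_{\alpha},\varphi X)$, $D=g(\nabla_{\varphi X}\xi_{\alpha},Y)$, the identity gives $A-B=C-D$ (both equal $2\varepsilon_{\alpha}g(X,\varphi^{2}Y)$), so the desired symmetry $(A+B)-(C+D)=0$ reduces to $B=D$; since the skew parts of $B$ and $D$ agree automatically, $B=D$ is a statement about the symmetric part of $(U,V)\mapsto g(\nabla_{U}\xi_{\alpha},V)$, which is $\frac{1}{2}(\mathcal{L}_{\xi_{\alpha}}g)$ itself — the very object one is trying to control. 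The cross-check is therefore circular. This does not affect your proof: the primary four-term computation uses only the differences $A-B$ and $D-C$, each an instance of the antisymmetric identity, and is complete as it stands.
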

\begin{proof}
As first step, using (\ref{eq01}),
 for any $X,Y\in\Gamma(T
{M})$ and any $\alpha\in\{1,\ldots,r\}$, we easily obtain,
\begin{equation*}
{g}(  (  \mathcal{L}_{{\xi}_{\alpha}}{\varphi})
X,Y)  =\varepsilon_{\alpha}(  -(  {\varphi}X)
(  {\eta}^{\alpha}(  Y)  )  +{\eta}^{\alpha
}(  {\nabla}_{{\varphi}X}Y+{\nabla}_{X}(  {\varphi
}Y))).
\end{equation*} 
It follows that
\begin{align*}
2{g}( h_{\alpha}(X),Y)  -2{g}(  h_{\alpha}(Y),X)   
&  =-\varepsilon_{\alpha}(  {\varphi}X)  (  {\eta
}^{\alpha}(  Y)  )  +\varepsilon_{\alpha}{\eta}^{\alpha
}[  {\varphi}X,Y]  +\varepsilon_{\alpha}(  {\varphi}Y)  (
{\eta}^{\alpha}(  X)  )\\
&  \quad  -\varepsilon_{\alpha}{\eta
}^{\alpha}[  {\varphi}Y,X]  =-\varepsilon_{\alpha}(  \mathcal{L}_{{\varphi}X}{\eta}^{\alpha})  (
Y)  +\varepsilon_{\alpha}(  \mathcal{L}_{{\varphi}Y}{\eta}^{\alpha})
(  X) =0.
\end{align*}
Obviously, for any $\alpha,\beta\in\{  1,\ldots,r\}  $ we have $h_{\alpha}(  {\xi}_{\beta})  =0$ and
finally
\begin{align*}
2(h_{\alpha} \circ \varphi+ \varphi \circ h_{\alpha})(X)&=\mathcal{L}_{\xi_{\alpha}}(\varphi^{2}X)-\varphi(\mathcal{L}_{\xi_{\alpha}}(\varphi X))+\varphi(\mathcal{L}_{\xi_{\alpha}}(\varphi X)-\varphi(\mathcal{L}_{\xi_{\alpha}} X))\\
&=\xi_{\alpha}(\eta^{\beta}(X))\xi_{\beta}-\eta^{\beta}[\xi_{\alpha},X]\xi_{\beta}=0
\end{align*}
for any $\alpha\in\{1,\ldots,r\}$ and any $X\in\Gamma(TM)$.
\end{proof}
\begin{proposition}\label{Propabb}
Let $(  {M},{\varphi
},{\xi}_{\alpha},{\eta}^{\alpha},{g})$ be an indefinite
almost $\mathcal{S}$-manifold. Then, for any $X,Y\in\Gamma( T
{M})$, the following properties hold:
\begin{enumerate}
\item[a)] ${\varphi}(  N(  X,Y)  )  +N(
{\varphi}X,Y)  =2{\eta}^{\alpha}(  X)  h_{\alpha
}(  Y)  $,
\item[b)] $N(X,Y)  \in\mathfrak{D}$.
 \end{enumerate}
\end{proposition}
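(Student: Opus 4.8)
The plan is to substitute the structure equation $d\eta^{\alpha}=\Phi$ into the normality tensor, so that $N(X,Y)=N_{\varphi}(X,Y)+2\Phi(X,Y)\sum_{\alpha}\xi_{\alpha}$, and then to work purely algebraically with the identities $\varphi^{3}=-\varphi$, $\varphi^{2}=-I+\eta^{\gamma}\otimes\xi_{\gamma}$, $\eta^{\alpha}\circ\varphi=0$ and the compatibility relation (\ref{eqz01}). The other ingredient I would record at the outset is $2h_{\alpha}(Y)=(\mathcal{L}_{\xi_{\alpha}}\varphi)Y=[\xi_{\alpha},\varphi Y]-\varphi[\xi_{\alpha},Y]$, since this is precisely the combination that will reconstruct $h_{\alpha}$.

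For a), note first that $\varphi\xi_{\alpha}=0$ gives $\varphi N(X,Y)=\varphi N_{\varphi}(X,Y)$, hence
\[
\varphi N(X,Y)+N(\varphi X,Y)=\bigl(\varphi N_{\varphi}(X,Y)+N_{\varphi}(\varphi X,Y)\bigr)+2\Phi(\varphi X,Y)\textstyle\sum_{\alpha}\xi_{\alpha}.
\]
I would expand the bracketed term from $N_{\varphi}(U,V)=\varphi^{2}[U,V]+[\varphi U,\varphi V]-\varphi[\varphi U,V]-\varphi[U,\varphi V]$, using $\varphi^{3}=-\varphi$ on $\varphi N_{\varphi}(X,Y)$ and $\varphi^{2}=-I+\eta^{\gamma}\otimes\xi_{\gamma}$ to distribute the brackets $[\varphi^{2}X,\varphi Y]$ and $\varphi[\varphi^{2}X,Y]$ occurring in $N_{\varphi}(\varphi X,Y)$. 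The terms $\pm\varphi[\varphi X,\varphi Y]$ and $\mp\varphi^{2}[\varphi X,Y]$ cancel in pairs, and the surviving part proportional to $\eta^{\alpha}(X)$ is exactly $\eta^{\alpha}(X)\bigl([\xi_{\alpha},\varphi Y]-\varphi[\xi_{\alpha},Y]\bigr)=2\eta^{\alpha}(X)h_{\alpha}(Y)$.

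What remains after this is a $\mathfrak{D}^{\bot}$-valued residue $-\bigl(\eta^{\alpha}[X,\varphi Y]+(\varphi Y)\eta^{\alpha}(X)\bigr)\xi_{\alpha}$, and the final step is to check it annihilates the term $2\Phi(\varphi X,Y)\sum_{\alpha}\xi_{\alpha}$. This is where the indefinite compatibility enters: since $\eta^{\alpha}\circ\varphi=0$, the formula $2d\eta^{\alpha}(X,\varphi Y)=-(\varphi Y)\eta^{\alpha}(X)-\eta^{\alpha}[X,\varphi Y]$ together with $d\eta^{\alpha}=\Phi$ gives $\eta^{\alpha}[X,\varphi Y]+(\varphi Y)\eta^{\alpha}(X)=-2\Phi(X,\varphi Y)$, while (\ref{eqz01}) (equivalently, skew-symmetry of $\varphi$ and the form of $\varphi^{2}$) yields $\Phi(X,\varphi Y)=g(X,\varphi^{2}Y)=-g(\varphi X,\varphi Y)=-\Phi(\varphi X,Y)$. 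Hence the residue equals $-2\Phi(\varphi X,Y)\sum_{\alpha}\xi_{\alpha}$, the two $\xi_{\alpha}$-valued contributions cancel, and one is left with $2\eta^{\alpha}(X)h_{\alpha}(Y)$, proving a).

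For b) I would simply evaluate $\eta^{\beta}$ on $N(X,Y)$. Because $\eta^{\beta}\circ\varphi=0$ and $\eta^{\beta}\circ\varphi^{2}=0$, the only surviving contribution from $N_{\varphi}(X,Y)$ is $\eta^{\beta}[\varphi X,\varphi Y]=-2d\eta^{\beta}(\varphi X,\varphi Y)=-2\Phi(\varphi X,\varphi Y)$, and $\Phi(\varphi X,\varphi Y)=\Phi(X,Y)$ by (\ref{eqz01}); adding the contribution $2\Phi(X,Y)$ coming from $2d\eta^{\beta}\otimes\xi_{\beta}$ gives $\eta^{\beta}(N(X,Y))=0$ for every $\beta$, that is $N(X,Y)\in\mathfrak{D}$. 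Alternatively b) follows from a), since the right-hand side lies in $\mathfrak{D}$ ($h_{\alpha}$ being self-adjoint with $h_{\alpha}\xi_{\beta}=0$) and, writing a general $X$ as $-\varphi^{2}X+\eta^{\gamma}(X)\xi_{\gamma}$ with $N(\xi_{\gamma},Y)=-2\varphi h_{\gamma}(Y)\in\mathfrak{D}$, one concludes $N(X,Y)\in\mathfrak{D}$. The main obstacle is the bookkeeping in a): expanding the eight Lie-bracket terms of the two Nijenhuis torsions, tracking the Leibniz terms $(\varphi Y)\eta^{\alpha}(X)$ produced when $\varphi^{2}$ is pushed through a bracket, and confirming that everything except the $h_{\alpha}$-term cancels; the only conceptual input is the identity $\Phi(X,\varphi Y)=-\Phi(\varphi X,Y)$ forced by (\ref{eqz01}).
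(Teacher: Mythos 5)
Your proof is correct, and it splits cleanly into "same as the paper" and "genuinely different." In part a) you are doing exactly the paper's computation: the same expansion of $\varphi(N(X,Y))+N(\varphi X,Y)$, with the terms proportional to $\eta^{\alpha}(X)$ assembling into $2\eta^{\alpha}(X)h_{\alpha}(Y)$ and a $\xi_{\alpha}$-valued residue killed by the structure equation. The only cosmetic difference is that the paper writes the residue as $\bigl((\mathcal{L}_{\varphi X}\eta^{\alpha})(Y)-(\mathcal{L}_{\varphi Y}\eta^{\alpha})(X)\bigr)\xi_{\alpha}$ and quotes Lemma \ref{LemNalfa} (i.e.\ $N^{(2)}_{\alpha}=0$), whereas you substitute $d\eta^{\alpha}=\Phi$ at the outset and cancel via $\Phi(X,\varphi Y)=-\Phi(\varphi X,Y)$; this is the identical mechanism, since $(\mathcal{L}_{\varphi X}\eta^{\alpha})(Y)=2d\eta^{\alpha}(\varphi X,Y)$. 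In part b) you genuinely diverge: the paper deduces $N(X,Y)\in\mathfrak{D}$ \emph{from} a), by first showing $[\xi_{\alpha},\mathfrak{D}]\subset\mathfrak{D}$, hence $N(\xi_{\alpha},Z)\in\mathfrak{D}$ for $Z\in\mathfrak{D}$ and $N(\xi_{\alpha},\xi_{\beta})=0$, then using a) together with $g(h_{\beta}(Y),\xi_{\alpha})=0$ to get $N(\varphi X,Y)\in\mathfrak{D}$, and finally decomposing $X=-\varphi^{2}X+\eta^{\alpha}(X)\xi_{\alpha}$. Your direct evaluation $\eta^{\beta}(N(X,Y))=-2\Phi(\varphi X,\varphi Y)+2\Phi(X,Y)=0$ is shorter, self-contained, and needs neither a) nor the properties of $h_{\alpha}$ from the preceding proposition, so it is a perfectly good replacement. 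One caveat on your parenthetical "alternative" derivation of b) from a): the identity $N(\xi_{\gamma},Y)=-2\varphi h_{\gamma}(Y)$ does not follow from a) alone, because a) only yields $\varphi(N(\xi_{\gamma},Y))=2h_{\gamma}(Y)$, which determines $N(\xi_{\gamma},Y)$ only modulo $\ker\varphi$; its $\mathfrak{D}^{\bot}$-component must still be checked (e.g.\ by your direct computation, or by the paper's bracket argument). Since you offer this only as an aside, it does not affect the validity of your proof.
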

 \begin{proof}
Using Lemma \ref{LemNalfa}, we obtain
\begin{align*}
{\varphi}(  N(  X,Y)  )  +N(  {\varphi
}X,Y)  
&  =-(  \mathcal{L}_{{\varphi}Y}{\eta}^{\alpha})  (
X)  {\xi}_{\alpha}+(  \mathcal{L}_{{\varphi}X}{\eta
}^{\alpha})  (  Y)  {\xi}_{\alpha}+{\eta}^{\alpha}(  X)  (
\mathcal{L}_{{\xi}_{\alpha}}{\varphi})  (  Y) \\
& =2{\eta}^{\alpha}(  X)  h_{\alpha}(  Y).
\end{align*}
Now, we observe that for any $\alpha\in\{1,\ldots,r\}$ we have
 $[ {\xi}_{\alpha},\mathfrak{D}]  \subset\mathfrak{D}$,
in fact, if $\beta\in\{1,\ldots,r\}$ and $X\in\Gamma(T{M})$, we have
${\eta}^{\beta}[  {\xi}_{\alpha},{\varphi}X]=-2d{\eta}^{\beta}(  {\xi}_{\alpha},{\varphi}X)=0$
and in particular, if $X\in\mathfrak{D}$ and $\alpha=\beta$, we get ${\eta}^{\alpha}[  {\xi}_{\alpha},X]  =0$.
So, if $Z\in\mathfrak{D}$ then $N({\xi}_{\alpha},Z)  =-[  {\xi}_{\alpha},Z]  -{\varphi}[  {\xi
}_{\alpha},{\varphi}Z]  \in\mathfrak{D}$.
It is easy to check that $N(  {\xi}_{\alpha},{\xi}_{\beta
})  =0$ for any $\alpha,\beta\in\{  1,\ldots,r\}$; therefore, we have that $N(  {\xi}_{\alpha},X)  \in
\mathfrak{D}$ for any $X\in\Gamma(T{M})$.
Finally, applying a), we have ${g}(  N(  {\varphi}X,Y)  ,{\xi}_{\alpha})
=2{\eta}^{\beta}(  X)  {g}(  h_{\beta}(
Y)  ,{\xi}_{\alpha})  =0.$
Hence, if $X,Y\in\Gamma(T{M})$, we get $N(  X,Y)  =-N(  {\varphi}^{2}X,Y)  +{\eta
}^{\alpha}(  X)  N(  {\xi}_{\alpha},Y)$, and being $N(  {\varphi}^{2}X,Y)  \in\mathfrak{D}$ and $N(  {\xi}_{\alpha
},Y)  \in\mathfrak{D}$, we conclude that $N(
X,Y)  \in\mathfrak{D}$.
 \end{proof}
\begin{proposition}\label{nablaxi}
\label{nablax} Let $(  {M}%
,{\varphi},{\xi}_{\alpha},{\eta}^{\alpha},{g})  $ be an
indefinite almost $\mathcal{S}$-manifold. For any $X\in\Gamma(T{M})$ and for any $\alpha\in\{  1,\ldots,r\}
$,
\[
{\nabla}_{X}{\xi}_{\alpha}=-\varepsilon_{\alpha}{\varphi}(  X)
-{\varphi}(  h_{\alpha}X)  .
\]
\end{proposition}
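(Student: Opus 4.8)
The plan is to determine $\nabla_{X}\xi_{\alpha}$ by splitting it along the orthogonal decomposition $TM=\mathfrak{D}\oplus\mathfrak{D}^{\bot}$ and computing each component separately, the main inputs being the Koszul-type formula (\ref{eq00}) and the identities for the characteristic vector fields already obtained in Proposition \ref{prop condizioniS}.

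First I would show that $\nabla_{X}\xi_{\alpha}\in\mathfrak{D}$, i.e.\ that $\eta^{\beta}(\nabla_{X}\xi_{\alpha})=0$ for every $\beta$. Since the Levi--Civita connection is torsion-free, $2d\eta^{\beta}(X,\xi_{\alpha})=(\nabla_{X}\eta^{\beta})(\xi_{\alpha})-(\nabla_{\xi_{\alpha}}\eta^{\beta})(X)$, while $d\eta^{\beta}=\Phi$ gives $d\eta^{\beta}(X,\xi_{\alpha})=g(X,\varphi\xi_{\alpha})=0$. Here $(\nabla_{X}\eta^{\beta})(\xi_{\alpha})=-\eta^{\beta}(\nabla_{X}\xi_{\alpha})$, and, using $\nabla_{\xi_{\alpha}}\xi_{\beta}=0$ from (\ref{eq01}) together with $\varepsilon_{\beta}g(\,\cdot\,,\xi_{\beta})=\eta^{\beta}$, one checks $\eta^{\beta}(\nabla_{\xi_{\alpha}}X)=\xi_{\alpha}(\eta^{\beta}(X))$, whence $(\nabla_{\xi_{\alpha}}\eta^{\beta})(X)=0$. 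Combining these forces $\eta^{\beta}(\nabla_{X}\xi_{\alpha})=0$, so $\nabla_{X}\xi_{\alpha}$ has no component in $\mathfrak{D}^{\bot}$.

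Next I would pin down the $\mathfrak{D}$-component. From $\varphi\xi_{\alpha}=0$ one gets $\varphi(\nabla_{X}\xi_{\alpha})=-(\nabla_{X}\varphi)\xi_{\alpha}$, hence by skew-symmetry of $\varphi$ the pairing identity $g(\nabla_{X}\xi_{\alpha},\varphi Z)=g((\nabla_{X}\varphi)\xi_{\alpha},Z)$ for every $Z$. I would evaluate the right-hand side through (\ref{eq00}) with $Y=\xi_{\alpha}$: the $\bar{\eta}(Z)$-term drops because $\varphi\xi_{\alpha}=0$, and $\bar{\eta}(\xi_{\alpha})=\varepsilon_{\alpha}$. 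The crucial computation is the identity $N(\xi_{\alpha},Z)=-2\varphi(h_{\alpha}Z)$, obtained by expanding the Nijenhuis part of $N$ at $\xi_{\alpha}$ (the $d\eta^{\beta}\otimes\xi_{\beta}$ part vanishes since $\Phi(\xi_{\alpha},\,\cdot\,)=0$) and recognizing $\mathcal{L}_{\xi_{\alpha}}\varphi=2h_{\alpha}$. Substituting, and using (\ref{eqz01}) with $\eta^{\beta}(h_{\alpha}Z)=0$ (valid because $h_{\alpha}$ is self-adjoint and $h_{\alpha}\xi_{\beta}=0$), yields $g((\nabla_{X}\varphi)\xi_{\alpha},Z)=-g(h_{\alpha}Z,X)-\varepsilon_{\alpha}g(\varphi Z,\varphi X)$.

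Finally I would verify that the proposed field $-\varepsilon_{\alpha}\varphi(X)-\varphi(h_{\alpha}X)$, which lies in $\mathfrak{D}$, produces exactly the same pairing with every $\varphi Z$: indeed $g(-\varepsilon_{\alpha}\varphi X,\varphi Z)=-\varepsilon_{\alpha}g(\varphi Z,\varphi X)$ and, using self-adjointness of $h_{\alpha}$ and (\ref{eqz01}), $g(-\varphi(h_{\alpha}X),\varphi Z)=-g(h_{\alpha}Z,X)$. Since both $\nabla_{X}\xi_{\alpha}$ and the candidate lie in $\mathfrak{D}$, where $g$ is non-degenerate, and $\varphi Z$ exhausts $\mathfrak{D}$ as $Z$ runs over $TM$, the two vector fields coincide, which is the claim. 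I expect the only genuine obstacle to be the computation $N(\xi_{\alpha},Z)=-2\varphi(h_{\alpha}Z)$, which is precisely where the possible non-normality of the almost $\mathcal{S}$-structure is absorbed into the operator $h_{\alpha}$; everything else is bookkeeping with the orthogonal splitting and the compatibility relation.
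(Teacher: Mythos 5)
Your proof is correct, and its computational core coincides with the paper's: you specialize (\ref{eq00}) to $Y=\xi_{\alpha}$, you identify $N(\xi_{\alpha},Z)=-2\varphi(h_{\alpha}Z)$ --- which is precisely the case $X=\xi_{\alpha}$ of part a) of Proposition \ref{Propabb}, the lemma the paper invokes at this point --- and you then recover $\nabla_{X}\xi_{\alpha}$ from its pairings with $\varphi Z$. Where you genuinely add something is your first step, the explicit verification that $\eta^{\beta}(\nabla_{X}\xi_{\alpha})=0$ via torsion-freeness, $d\eta^{\beta}=\Phi$ and $\nabla_{\xi_{\alpha}}\xi_{\beta}=0$. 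The paper never proves this; it derives $\varphi(\nabla_{X}\xi_{\alpha})=h_{\alpha}(X)+\varepsilon_{\alpha}X-\varepsilon_{\alpha}\eta^{\beta}(X)\xi_{\beta}$, says ``applying $\varphi$, we complete the proof,'' and only appends ``Note that $\nabla_{X}\xi_{\alpha}\in\mathfrak{D}$.'' But since $\varphi^{2}=-I+\eta^{\beta}\otimes\xi_{\beta}$, applying $\varphi$ to that identity recovers only the $\mathfrak{D}$-component of $\nabla_{X}\xi_{\alpha}$; the vanishing of the $\ker\varphi$-component is an input to that step, not an output of it. Your Step 1 supplies exactly this input, and your closing argument (both candidate and $\nabla_{X}\xi_{\alpha}$ lie in $\mathfrak{D}$, where $g$ is non-degenerate and $\varphi Z$ exhausts $\mathfrak{D}$) is a clean equivalent of the paper's inversion of $\varphi$. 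So your write-up follows the same route but is slightly more complete than the printed proof.
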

\begin{proof}
Putting $X={\xi}_{\alpha}$ in a) of Proposition \ref{Propabb}, we have that for any $Z,Y\in \Gamma(T
{M})$%
\begin{align*}
{g}(N({\xi}_{\alpha},Y),{\varphi}Z)
& =-{g}({\varphi}(N({\xi}_{\alpha},Y)
),Z) =-2{\eta}^{\beta}({\xi}_{\alpha}){g}(
h_{\beta}(Y),Z) =-2{g}(h_{\alpha}(Y),Z).
\end{align*}
Moreover, applying (\ref{eq00}) of Proposition \ref{prop condizioniS}, for any $\alpha\in\{1,\ldots,r\}$ we find:
\begin{align*}
{g}(-{\varphi}({\nabla}_{X}{\xi}_{\alpha
}),Z)   & =\frac{1}{2}{g}(N({\xi}_{\alpha},Z)
,{\varphi}X)  -{g}({\varphi}Z,{\varphi}X)
{\eta}({\xi}_{\alpha}) \\
& =-{g}(h_{\alpha}(Z),X)  -\varepsilon
_{\alpha}{g}(Z,X)  +\varepsilon_{\alpha}\varepsilon_{\beta}{\eta}^{\beta}(  X)  {\eta}^{\beta
}(Z) \\
& ={g}(-h_{\alpha}(X)  -\varepsilon_{\alpha
}X+\varepsilon_{\alpha}{\eta}^{\beta}(X)  {\xi}_{\beta
},Z)  ,
\end{align*}
then ${\varphi}(  {\nabla}_{X}{\xi}_{\alpha})  =h_{\alpha
}(  X)  +\varepsilon_{\alpha}X-\varepsilon_{\alpha}{\eta
}^{\beta}(  X)  {\xi}_{\beta}$,
and, applying ${\varphi}$, we complete the proof.
Note that ${\nabla}_{X}{\xi}_{\alpha}\in\mathfrak{D}$.
\end{proof} 
\begin{proposition}
Let $({M},{\varphi},{\xi}_{\alpha},{\eta}^{\alpha},{g})$ be
an indefinite almost $\mathcal{S}$-manifold. For $X,Y\in\Gamma(T{M})$, we have
\[
(  {\nabla}_{X}{\varphi})  (  Y)  +(
{\nabla}_{{\varphi}X}{\varphi})  (  {\varphi
}Y)  =2{g}(  {\varphi}X,{\varphi}Y)  \bar{\xi
}+\bar{\eta}(  Y)  {\varphi}^{2}(  X)  -{\eta
}^{\alpha}(  Y)  h_{\alpha}(  X).
\]
where $\bar{\xi}:=\sum
_{\alpha=1}^{r}{\xi}_{\alpha}$ and 
 $\bar{\eta}(X)=g(X,\bar{\xi})$, for any $X\in\Gamma(TM)$.
\end{proposition}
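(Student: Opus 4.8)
The plan is to prove the identity pointwise by polarization: fix an arbitrary $Z\in\Gamma(TM)$, evaluate $g$ of both sides against $Z$, and conclude from the non-degeneracy of $g$. The entire computation is driven by formula (\ref{eq00}), which I would apply twice, once to $(\nabla_{X}\varphi)Y$ and once to $(\nabla_{\varphi X}\varphi)(\varphi Y)$. In the second application the vector $\varphi(\varphi X)=\varphi^{2}X$ plays the role that $\varphi X$ plays in the first, so the two resulting expressions are
\begin{align*}
2g\big((\nabla_{X}\varphi)Y,Z\big)&=g(N(Y,Z),\varphi X)+2g(\varphi Y,\varphi X)\bar{\eta}(Z)-2g(\varphi Z,\varphi X)\bar{\eta}(Y),\\
2g\big((\nabla_{\varphi X}\varphi)(\varphi Y),Z\big)&=g(N(\varphi Y,Z),\varphi^{2}X)+2g(\varphi^{2}Y,\varphi^{2}X)\bar{\eta}(Z)-2g(\varphi Z,\varphi^{2}X)\bar{\eta}(\varphi Y).
\end{align*}
Adding these gives $2g(\text{LHS},Z)$, and the goal is to rearrange the right-hand side into $2g(\text{RHS},Z)$.

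First I would dispose of the easy terms. Since $\varphi$ is skew-symmetric and $\varphi\xi_{\alpha}=0$, one has $\eta^{\alpha}\circ\varphi=0$, hence $\bar{\eta}(\varphi Y)=0$ and the last term of the second line vanishes. The same fact, fed into the compatibility relation (\ref{eqz01}), yields $g(\varphi^{2}Y,\varphi^{2}X)=g(\varphi Y,\varphi X)$, so the two $\bar{\eta}(Z)$-contributions collapse to $4g(\varphi X,\varphi Y)\bar{\eta}(Z)$. Finally, skew-symmetry of $\varphi$ gives $g(\varphi^{2}X,Z)=-g(\varphi X,\varphi Z)$, which is exactly the quantity needed to match the $\bar{\eta}(Y)\varphi^{2}(X)$-term of the statement; recall also that $g(\bar{\xi},Z)=\bar{\eta}(Z)$ by definition of $\bar{\eta}$.

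The crux is the pair of Nijenhuis terms $g(N(Y,Z),\varphi X)+g(N(\varphi Y,Z),\varphi^{2}X)$, and this is where I expect the only real bookkeeping. I would first write $\varphi^{2}X=-X+\eta^{\beta}(X)\xi_{\beta}$ in the second term; since $N(\varphi Y,Z)\in\mathfrak{D}$ by part b) of Proposition \ref{Propabb} and $\xi_{\beta}\perp\mathfrak{D}$, the characteristic-direction part drops out, leaving $-g(N(\varphi Y,Z),X)$. Then part a) of Proposition \ref{Propabb}, namely $\varphi(N(Y,Z))+N(\varphi Y,Z)=2\eta^{\alpha}(Y)h_{\alpha}(Z)$, combined once more with the skew-symmetry of $\varphi$, makes $g(N(Y,Z),\varphi X)$ cancel against the $\varphi(N(Y,Z))$ contribution, so the two Nijenhuis terms telescope to $-2\eta^{\alpha}(Y)g(h_{\alpha}(Z),X)$. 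Using that $h_{\alpha}$ is self-adjoint (part a) of the preceding Proposition), this becomes $-2\eta^{\alpha}(Y)g(h_{\alpha}(X),Z)$, matching the last term of the statement.

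Collecting everything and dividing by $2$, I obtain $g(\text{LHS},Z)=2g(\varphi X,\varphi Y)\bar{\eta}(Z)+\bar{\eta}(Y)g(\varphi^{2}X,Z)-\eta^{\alpha}(Y)g(h_{\alpha}(X),Z)=g(\text{RHS},Z)$ for every $Z$, and non-degeneracy of $g$ finishes the proof. The principal point of care is the Nijenhuis cancellation: one must correctly invoke both that $N(\varphi Y,Z)\in\mathfrak{D}$ (to discard the $\xi_{\beta}$ components) and the precise form of Proposition \ref{Propabb} a) (to obtain the telescoping), after which the remaining steps are routine applications of compatibility and of $\eta^{\alpha}\circ\varphi=0$.
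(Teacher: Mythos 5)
Your proposal is correct and follows essentially the same route as the paper: two applications of (\ref{eq00}) (to $(\nabla_{X}\varphi)Y$ and to $(\nabla_{\varphi X}\varphi)(\varphi Y)$), collapse of the $\bar{\eta}(Z)$-terms via compatibility and $\eta^{\alpha}\circ\varphi=0$, and combination of the two Nijenhuis terms through parts a) and b) of Proposition \ref{Propabb} together with the self-adjointness of $h_{\alpha}$, concluding by non-degeneracy of $g$. The only cosmetic difference is that you spell out explicitly the intermediate steps (e.g.\ discarding the $\xi_{\beta}$-component of $\varphi^{2}X$ against $N(\varphi Y,Z)\in\mathfrak{D}$) that the paper compresses into a single displayed computation.
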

\begin{proof}
Using (\ref{eq00}), Proposition \ref{Propabb} and Proposition \ref{nablaxi}, for any
$X,Y,Z\in\Gamma(T{M})$ we have
\begin{align*}
2{g}(  (  {\nabla}_{X}{\varphi}) (
Y)  ,Z)+2{g}( (  {\nabla}_{{\varphi}%
X}{\varphi}) (  {\varphi}Y)  ,Z)   
&  =-{g}(  {\varphi}(  N(  Y,Z)  )
+N(  {\varphi}Y,Z)  ,X)  \\
& \quad +4{g}(  {\varphi}Y,{\varphi}X)
\bar{\eta}(  Z)  -2{g}(  {\varphi}Z,{\varphi
}X)  \bar{\eta}(  Y) \\
&  =-2{g}(  Z,{\eta}^{\alpha}(  Y)  h_{\alpha}(
X) ) +4{g}(  {\varphi}Y,{\varphi}X)
{g}(  Z,\bar{\xi}) \\
&  \quad +2{g}(  Z,\bar{\eta}(  Y)
{\varphi}^{2}X).
\end{align*}
Then, we deduce $(  {\nabla}_{X}{\varphi})  (  Y)  +(
{\nabla}_{{\varphi}X}{\varphi})  (  {\varphi
}Y)  =2{g}(  {\varphi}X,{\varphi}Y)  \bar{\xi
}+\bar{\eta}(  Y)  {\varphi}^{2}(  X)  -{\eta
}^{\alpha}(  Y)  h_{\alpha}(  X)$.
Obviously,  $\bar{\eta}(X)=\sum_{\alpha=1}^{r}\varepsilon_{\alpha}\eta^{\alpha}(X)=\sum_{\alpha=1}^{r}g(X,\xi_{\alpha})=g(X,\bar{\xi})$.
 \end{proof}
\begin{corollary}
Let $(
{M},{\varphi},{\xi}_{\alpha},{\eta}^{\alpha},{g})$ be
an indefinite almost $\mathcal{S}$-manifold. Then, for any $X,Y\in\mathfrak{D}$:
\begin{enumerate}
\item[a)]$({\nabla}_{X}%
{\varphi})  (  Y)  +(  {\nabla}_{{\varphi}%
X}{\varphi})  (  {\varphi}Y)  =2{g}(
X,Y)  \bar{\xi}$,
\item[b)]$(
{\nabla}_{X}{\varphi})  (  {\varphi}X)  =(
{\nabla}_{{\varphi}X}{\varphi})  (  X)$.
\end{enumerate}
\end{corollary}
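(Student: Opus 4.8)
The corollary is an immediate specialization of the preceding Proposition to vector fields lying in the distribution $\mathfrak{D}=\operatorname{Im}\varphi$, so the plan is simply to restrict the general identity
\[
(\nabla_{X}\varphi)(Y)+(\nabla_{\varphi X}\varphi)(\varphi Y)=2g(\varphi X,\varphi Y)\bar{\xi}+\bar{\eta}(Y)\varphi^{2}(X)-\eta^{\alpha}(Y)h_{\alpha}(X)
\]
to the case $X,Y\in\mathfrak{D}$ and observe that the two correction terms drop out. The key structural fact I would invoke is that for a section $Y\in\mathfrak{D}=\operatorname{Im}\varphi$ one has $\eta^{\alpha}(Y)=0$ for every $\alpha$, since $TM=\operatorname{Im}\varphi\oplus\ker\varphi$ is the orthogonal decomposition and $\ker\varphi=\mathfrak{D}^{\bot}$ is exactly where the $\eta^{\alpha}$ detect; equivalently $\varepsilon_{\alpha}\eta^{\alpha}(Y)=g(Y,\xi_{\alpha})=0$ by compatibility (\ref{eqz01}) whenever $Y\perp\ker\varphi$.

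For part a), First I would note that $Y\in\mathfrak{D}$ forces $\bar{\eta}(Y)=\sum_{\alpha}\varepsilon_{\alpha}\eta^{\alpha}(Y)=0$, which annihilates the term $\bar{\eta}(Y)\varphi^{2}(X)$, and that $\eta^{\alpha}(Y)=0$ for each fixed $\alpha$ annihilates the summand $\eta^{\alpha}(Y)h_{\alpha}(X)$. What remains is $2g(\varphi X,\varphi Y)\bar{\xi}$, and since $X,Y\in\mathfrak{D}$ the compatibility relation (\ref{eqz01}) gives $g(\varphi X,\varphi Y)=g(X,Y)-\varepsilon_{\alpha}\eta^{\alpha}(X)\eta^{\alpha}(Y)=g(X,Y)$, because $\eta^{\alpha}(X)=\eta^{\alpha}(Y)=0$. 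This yields precisely $2g(X,Y)\bar{\xi}$, establishing a).

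For part b), the plan is to substitute $Y=\varphi X$ into the identity of a). Since $\varphi$ maps $\mathfrak{D}$ to $\mathfrak{D}$, we have $\varphi X\in\mathfrak{D}$, so a) applies with $Y$ replaced by $\varphi X$, giving
\[
(\nabla_{X}\varphi)(\varphi X)+(\nabla_{\varphi X}\varphi)(\varphi^{2}X)=2g(X,\varphi X)\bar{\xi}.
\]
The right-hand side vanishes because $g(X,\varphi X)=\Phi(X,X)=0$ by skew-symmetry of $\Phi$ (equivalently, $\varphi$ is $g$-skew-symmetric). On the left, I would use $\varphi^{2}X=-X+\eta^{\alpha}(X)\xi_{\alpha}=-X$ for $X\in\mathfrak{D}$, together with $(\nabla_{\varphi X}\varphi)(-X)=-(\nabla_{\varphi X}\varphi)(X)$ by linearity of the tensor $\nabla\varphi$ in its argument, to rewrite the left-hand side as $(\nabla_{X}\varphi)(\varphi X)-(\nabla_{\varphi X}\varphi)(X)$. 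Setting this equal to zero gives $(\nabla_{X}\varphi)(\varphi X)=(\nabla_{\varphi X}\varphi)(X)$, which is b). No step here presents a genuine obstacle; the only point requiring care is the bookkeeping that $\varphi^{2}$ acts as $-\operatorname{id}$ on $\mathfrak{D}$ and that the $\eta^{\alpha}$-dependent terms genuinely vanish on $\mathfrak{D}$, both of which follow directly from the $g.f.f$-structure identities and the orthogonal splitting.
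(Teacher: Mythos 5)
Your proof is correct and follows essentially the same route as the paper: part a) by specializing the preceding proposition to $X,Y\in\mathfrak{D}$ (where the $\bar{\eta}$- and $\eta^{\alpha}$-terms vanish and $g(\varphi X,\varphi Y)=g(X,Y)$), and part b) by substituting $Y=\varphi X$ into a) and using $g(X,\varphi X)=0$ together with $\varphi^{2}X=-X$ on $\mathfrak{D}$. The only difference is that you spell out the justifications (compatibility identity, skew-symmetry of $\varphi$) that the paper leaves implicit.
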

 \begin{proof}
The first statement follows from the above proposition.
Putting $Y:={\varphi}X$ in a), we have $(  {\nabla}_{X}{\varphi})  (  {\varphi}X)
+(  {\nabla}_{{\varphi}X}{\varphi})  (
{\varphi}^{2}X)  =2{g}(  X,{\varphi}X)  \bar{\xi
}=0$,
therefore, being ${\varphi}^{2}X=-X$, we obtain $(  {\nabla}%
_{X}{\varphi})  (  {\varphi}X)  =(  {\nabla
}_{{\varphi}X}{\varphi})  (  X).$%
 \end{proof}
\begin{remark}
\label{CondS}\emph{The statement b) can be written as ${\nabla}_{X}(  {\varphi}^{2}X)
-{\varphi}(  {\nabla}_{X}{\varphi}X)  ={\nabla
}_{{\varphi}X}(  {\varphi}X)  -{\varphi}(
{\nabla}_{{\varphi}X}X)$,
i.e. as ${\nabla}_{X}X+{\nabla}_{{\varphi}X}(  {\varphi}X)
={\varphi}[  {\varphi}X,X]$.}
\end{remark}

\subsection{Indefinite $\mathcal{S}$-manifolds}
\begin{definition}\emph{
Let $(  
{M},{\varphi},{\xi}_{\alpha},{\eta}^{\alpha},{g})$ be
an indefinite metric $g.f.f$-manifold. ${M}$ is said an {\it indefinite $\mathcal{S}$-manifold} if it is a normal indefinite almost
$\mathcal{S}$-manifold.}
\end{definition}
\begin{proposition}
\label{PropCondS}Let
$(  {M},{\varphi},{\xi}_{\alpha},{\eta}^{\alpha},
{g})  $ be an indefinite almost $\mathcal{S}$-manifold. Then ${M}$
is an indefinite $\mathcal{S}$-manifold if and only if, for any $X,Y\in\Gamma(  T{M})  $,  the Levi-Civita connection
satisfies:
\begin{align*}
(  {\nabla}_{X}{\varphi})  Y  & ={g}(  X,Y)
\bar{\xi}-\bar{\eta}(  Y)  X-  \varepsilon_{\alpha}{\eta}^{\alpha}(
X)  {\eta}^{\alpha}(  Y) \bar{\xi}%
+\bar{\eta}(  Y)  {\eta}^{\alpha}(  X)  {\xi
}_{\alpha},
\end{align*}
or equivalently%
\begin{equation}
(  {\nabla}_{X}{\varphi})  Y={g}(  {\varphi
}X,{\varphi}Y) \bar{\xi}+\bar{\eta}(  Y)
{\varphi}^{2}(  X)  .\label{eq07}%
\end{equation}
\end{proposition}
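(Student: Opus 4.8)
The plan is to use equation (\ref{eq00}) of Proposition \ref{prop condizioniS} as the master identity, since it already incorporates the three defining conditions of an indefinite almost $\mathcal{S}$-manifold ($d\Phi=0$, $N_{\alpha}^{(2)}=0$, $d\eta^{\alpha}=\Phi$) and isolates the Nijenhuis term $g(N(Y,Z),\varphi X)$. Normality is exactly the vanishing of $N$, so the whole equivalence should reduce to showing that, modulo (\ref{eq00}), the formula (\ref{eq07}) is equivalent to $g(N(Y,Z),\varphi X)=0$ for all $X,Y,Z\in\Gamma(TM)$.

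For the direct implication I would assume $M$ normal, so $N=0$, and substitute into (\ref{eq00}), obtaining $2g((\nabla_X\varphi)Y,Z)=2g(\varphi Y,\varphi X)\bar{\eta}(Z)-2g(\varphi Z,\varphi X)\bar{\eta}(Y)$. The task is then purely algebraic: rewrite the right-hand side so that $Z$ appears only as the second slot of $g(\,\cdot\,,Z)$. Using $\bar{\eta}(Z)=g(Z,\bar{\xi})$ turns the first term into $2g(g(\varphi X,\varphi Y)\bar{\xi},Z)$, while the skew-symmetry of $\varphi$, which gives $g(\varphi Z,\varphi X)=-g(Z,\varphi^{2}X)$, turns the second into $2g(\bar{\eta}(Y)\varphi^{2}X,Z)$. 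Nondegeneracy of $g$ then yields (\ref{eq07}). The equivalence of the two displayed forms of (\ref{eq07}) is immediate from the compatibility relation (\ref{eqz01}), namely $g(\varphi X,\varphi Y)=g(X,Y)-\varepsilon_{\alpha}\eta^{\alpha}(X)\eta^{\alpha}(Y)$, together with $\varphi^{2}X=-X+\eta^{\alpha}(X)\xi_{\alpha}$.

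For the converse I would assume (\ref{eq07}) and evaluate the left-hand side of (\ref{eq00}) from it: computing $2g((\nabla_X\varphi)Y,Z)$ directly from (\ref{eq07}), again via $\bar{\eta}(Z)=g(Z,\bar{\xi})$ and the skew-symmetry of $\varphi$, reproduces exactly the two terms $2g(\varphi Y,\varphi X)\bar{\eta}(Z)-2g(\varphi Z,\varphi X)\bar{\eta}(Y)$ occurring in (\ref{eq00}). Comparing with (\ref{eq00}) forces $g(N(Y,Z),\varphi X)=0$ for all $X,Y,Z$. The main, and really the only, delicate point is to upgrade this to $N=0$: as $X$ runs over $\Gamma(TM)$, $\varphi X$ sweeps out all of $\mathfrak{D}=\operatorname{Im}\varphi$, and by Proposition \ref{Propabb} b) we already know $N(Y,Z)\in\mathfrak{D}$; since the decomposition $TM=\mathfrak{D}\oplus\mathfrak{D}^{\bot}$ is orthogonal with $g|_{\mathfrak{D}^{\bot}}$ nondegenerate (the $\xi_{\alpha}$ satisfy $g(\xi_{\alpha},\xi_{\beta})=\varepsilon_{\alpha}\delta_{\alpha\beta}$), the restriction $g|_{\mathfrak{D}}$ is nondegenerate, whence $N(Y,Z)=0$. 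Thus $M$ is normal, i.e. an indefinite $\mathcal{S}$-manifold. I expect this nondegeneracy-on-$\mathfrak{D}$ step, leaning essentially on Proposition \ref{Propabb} b), to be the crux of the argument; everything else is bookkeeping with (\ref{eq00}) and the skew-symmetry of $\varphi$.
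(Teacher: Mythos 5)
Your proposal is correct and follows essentially the same route as the paper: both directions are run through equation (\ref{eq00}), with the direct implication reduced to rewriting $\bar{\eta}(Z)=g(Z,\bar\xi)$ and $g(\varphi Z,\varphi X)=-g(Z,\varphi^2X)$, and the converse reduced to $g(N(Y,Z),\varphi X)=0$ upgraded to $N=0$ via Proposition \ref{Propabb} b). Your only addition is to spell out explicitly the nondegeneracy of $g|_{\mathfrak{D}}$ and the surjectivity of $\varphi$ onto $\mathfrak{D}$, which the paper leaves implicit in its citation of Proposition \ref{Propabb}.
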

 \begin{proof}
Assuming that ${M}$ is an indefinite $\mathcal{S}$-manifold, (\ref{eq00})
becomes
$${g}(  (  {\nabla}_{X}{\varphi})  Y,Z)={g}(  {\varphi}Y,{\varphi}X)  \bar{\eta}(
Z)  -{g}(  {\varphi}Z,{\varphi}X)  \bar{\eta
}(  Y) ={g}(  Z,{g}(  {\varphi}Y,{\varphi}X)
\bar{\xi}+\bar{\eta}(  Y)  {\varphi}^{2}X),$$ 
from which
$$(  {\nabla}_{X}{\varphi})  Y ={g}(
{\varphi}X,{\varphi}Y)  \bar{\xi}+\bar{\eta}(
Y)  {\varphi}^{2}(  X)={g}(  X,Y)  \bar{\xi}-\varepsilon_{\alpha}{\eta}^{\alpha}(  X)  {\eta}^{\alpha
}(  Y)  \bar{\xi} -\bar{\eta}(  Y)  X+\bar{\eta
}(  Y) {\eta}^{\alpha}(  X)  {\xi}_{\alpha}.$$
Vice versa, we suppose that ${\nabla}$ satisfies (\ref{eq07}). Then we obtain
${g}(  (  {\nabla}_{X}{\varphi})  Y,Z)
={g}(  {\varphi}Y,{\varphi}X) \bar{\eta}(
Z)  -{g}(  {\varphi}Z,{\varphi}X)  \bar{\eta
}(  Y)$, 
and comparing with (\ref{eq00}), we deduce for any
$X,Y\in\Gamma(  T{M})$,  ${g}(  N(  Y,Z)  ,{\varphi}X)  =0$.
From Proposition \ref{Propabb}, we obtain that $N(  Y,Z)  =0$ for any
$Y,Z\in\Gamma(T {M})$, that is ${M}$ is normal.%
 \end{proof}
\begin{remark}
\label{nablaS-mani}\emph{In an
indefinite $\mathcal{S}$-manifold $(  {M},{\varphi},{\xi
}_{\alpha},{\eta}^{\alpha},{g})  $, the operators $\mathcal{L}_{{\xi}_{\alpha}}%
{\varphi}$, and then $h_{\alpha}$, vanish. In fact, by direct computation for any $X\in\Gamma(  T
  {M})  $ and for any $\alpha\in\{  1,\ldots,r\}$
we get $N(  {\varphi}X,{\xi}_{\alpha})=(  \mathcal{L}_{{\xi}_{\alpha}}%
{\varphi})  X=2h_{\alpha}(  X),$
and the normality condition implies $h_{\alpha}=0$.
Using Proposition \ref{nablax}, we obtain, for any $\alpha\in\{
1,\ldots,r\}  $,
${\nabla}_{X}{\xi}_{\alpha}=-\varepsilon_{\alpha}{\varphi}X$.}
\end{remark}

Now, we give the condition of indefinite $\mathcal{S}%
$-manifold in terms of the fundamental $2$-form:
\begin{proposition}
\label{fi S-cond}Let
$(  {M},{\varphi},{\xi}_{\alpha},{\eta}^{\alpha},
{g})  $ be an indefinite almost $\mathcal{S}$-manifold. Then ${M}$
is an indefinite $\mathcal{S}$-manifold if and only if for any $X,Y,Z\in
\Gamma(  T{M})  $:
\begin{align}
(  {\nabla}_{X}\Phi)  (  Y,Z)  
&=\bar{\eta}(Y){g}({\varphi}X,{\varphi}Z)-\bar{\eta}(Z){g}({\varphi}X,{\varphi}Y) . \label{condfi}
\end{align}
\end{proposition}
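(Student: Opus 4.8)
The plan is to reduce the statement to the connection-level characterization of indefinite $\mathcal{S}$-manifolds already established in Proposition \ref{PropCondS}, using the bridge between $\nabla\Phi$ and $\nabla\varphi$ supplied by part b) of Proposition \ref{Nphi3}. Recall that the latter gives $(\nabla_X\Phi)(Y,Z)=g(Y,(\nabla_X\varphi)Z)$ for all $X,Y,Z\in\Gamma(TM)$, so the two-form identity (\ref{condfi}) is simply a rewriting, tested against an arbitrary $Y$, of the tensorial identity (\ref{eq07}). Thus the proof is a direct translation between the two, and no genuinely new geometric input is required beyond the skew-symmetry of $\varphi$ and the definition $\bar{\eta}(W)=g(W,\bar{\xi})$.

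For the forward direction I would assume $M$ is an indefinite $\mathcal{S}$-manifold, so that (\ref{eq07}) holds, namely $(\nabla_X\varphi)Z=g(\varphi X,\varphi Z)\bar{\xi}+\bar{\eta}(Z)\varphi^2(X)$. Substituting this into $(\nabla_X\Phi)(Y,Z)=g(Y,(\nabla_X\varphi)Z)$ and expanding by linearity of $g$ gives $g(\varphi X,\varphi Z)\,g(Y,\bar{\xi})+\bar{\eta}(Z)\,g(Y,\varphi^2X)$. The first summand is $\bar{\eta}(Y)g(\varphi X,\varphi Z)$ by the definition of $\bar{\eta}$. For the second, skew-symmetry of $\varphi$ gives $g(Y,\varphi^2X)=g(\varphi^2X,Y)=-g(\varphi X,\varphi Y)$, so the second summand becomes $-\bar{\eta}(Z)g(\varphi X,\varphi Y)$; adding the two yields exactly (\ref{condfi}).

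The converse runs the same computation backwards. Assuming (\ref{condfi}), and again using part b) of Proposition \ref{Nphi3}, I would rewrite its right-hand side as $g(Y,\cdot)$ applied to a single vector: the term $\bar{\eta}(Y)g(\varphi X,\varphi Z)$ equals $g(Y,g(\varphi X,\varphi Z)\bar{\xi})$, while $-\bar{\eta}(Z)g(\varphi X,\varphi Y)=\bar{\eta}(Z)g(\varphi^2X,Y)=g(Y,\bar{\eta}(Z)\varphi^2X)$ by skew-symmetry. Hence $g(Y,(\nabla_X\varphi)Z)=g\bigl(Y,\,g(\varphi X,\varphi Z)\bar{\xi}+\bar{\eta}(Z)\varphi^2X\bigr)$ for every $Y$, and non-degeneracy of $g$ forces $(\nabla_X\varphi)Z=g(\varphi X,\varphi Z)\bar{\xi}+\bar{\eta}(Z)\varphi^2X$, which is (\ref{eq07}). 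By Proposition \ref{PropCondS} this means $M$ is an indefinite $\mathcal{S}$-manifold.

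There is no real obstacle; the only thing to watch is the sign bookkeeping when passing $\varphi^2$ across the metric via skew-symmetry, and the use of non-degeneracy of $g$ (rather than positive-definiteness) to cancel the arbitrary test field $Y$ in the converse — a step that is legitimate in the semi-Riemannian setting precisely because $g$ is non-degenerate.
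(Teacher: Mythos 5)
Your proposal is correct and follows exactly the paper's own route: the authors' one-line proof is precisely to use $(\nabla_X\Phi)(Y,Z)=g(Y,(\nabla_X\varphi)Z)$ (Proposition \ref{Nphi3} b)) in (\ref{eq07}), i.e.\ the same reduction to Proposition \ref{PropCondS} that you carry out. Your write-up merely makes explicit the sign bookkeeping via skew-symmetry of $\varphi$ and the appeal to non-degeneracy of $g$, which the paper leaves implicit.
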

 \begin{proof} One simply uses $(  {\nabla}_{X}\Phi) (  Y,Z) ={g}(  Y,(  {\nabla}_{X}{\varphi})  Z)$
in (\ref{eq07}).
 \end{proof}
\begin{proposition}
Let $(  {M},{\varphi},{\xi}_{\alpha},{\eta}^{\alpha},
{g})  $ be an indefinite metric $g.f.f$-manifold. If the vector fields ${\xi}_{\alpha}$ are Killing, $\mathcal{L}_{\xi_{\alpha}}\eta^{\beta}=0$ for any $\alpha,\beta\in\{1,\ldots,r\}$ and ${M}$ satisfies $(\ref{eq07})$ or equivalently $(\ref{condfi})$, then ${M}$ is an indefinite $\mathcal{S}$-manifold.
\end{proposition}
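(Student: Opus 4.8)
The plan is to establish the two defining conditions of an indefinite $\mathcal{S}$-manifold in turn. Recall that such a manifold is, by definition, a \emph{normal} indefinite almost $\mathcal{S}$-manifold, so I must show that (i) $d\eta^{\alpha}=\Phi$ for every $\alpha$, which promotes the given $g.f.f$-manifold to an indefinite almost $\mathcal{S}$-manifold, and (ii) the resulting structure is normal. The key observation is that once (i) is in hand, (ii) is essentially free: the hypothesis (\ref{eq07}) is precisely the identity that characterizes normality among indefinite almost $\mathcal{S}$-manifolds in Proposition \ref{PropCondS}, so it suffices to invoke that proposition. Thus the entire problem reduces to proving $d\eta^{\alpha}=\Phi$.

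To prove $d\eta^{\alpha}=\Phi$ I would first express the exterior derivative through the Levi-Civita connection. Using $\eta^{\alpha}(Y)=\varepsilon_{\alpha}g(Y,\xi_{\alpha})$ and the fact that $\nabla$ is torsion-free, one obtains $2d\eta^{\alpha}(X,Y)=\varepsilon_{\alpha}(g(Y,\nabla_{X}\xi_{\alpha})-g(X,\nabla_{Y}\xi_{\alpha}))$. The Killing hypothesis makes $\nabla\xi_{\alpha}$ skew-adjoint, so the two terms coincide and $d\eta^{\alpha}(X,Y)=\varepsilon_{\alpha}g(\nabla_{X}\xi_{\alpha},Y)$. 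Consequently everything hinges on the single identity $\nabla_{X}\xi_{\alpha}=-\varepsilon_{\alpha}\varphi X$, since then $d\eta^{\alpha}(X,Y)=-g(\varphi X,Y)=g(X,\varphi Y)=\Phi(X,Y)$.

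To derive $\nabla_{X}\xi_{\alpha}=-\varepsilon_{\alpha}\varphi X$ I would evaluate (\ref{eq07}) at $Y=\xi_{\beta}$. Using $\varphi\xi_{\beta}=0$ and $\bar{\eta}(\xi_{\beta})=\varepsilon_{\beta}$, the right-hand side collapses to $\varepsilon_{\beta}\varphi^{2}X$, while the left-hand side equals $(\nabla_{X}\varphi)\xi_{\beta}=-\varphi(\nabla_{X}\xi_{\beta})$. Applying $\varphi$ once more and expanding $\varphi^{2}$ yields $\nabla_{X}\xi_{\beta}=-\varepsilon_{\beta}\varphi X+\eta^{\gamma}(\nabla_{X}\xi_{\beta})\xi_{\gamma}$, so the claim is equivalent to the vanishing of the $\mathfrak{D}^{\bot}$-component $g(\nabla_{X}\xi_{\beta},\xi_{\gamma})$. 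The Killing condition gives $g(\nabla_{X}\xi_{\beta},\xi_{\gamma})=-g(X,\nabla_{\xi_{\gamma}}\xi_{\beta})$, so the whole matter reduces to showing $\nabla_{\xi_{\gamma}}\xi_{\beta}=0$.

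This last point is the main obstacle, and it is the only place where the hypothesis $\mathcal{L}_{\xi_{\alpha}}\eta^{\beta}=0$ enters. Setting $X=\xi_{\gamma}$ in the displayed formula shows $\nabla_{\xi_{\gamma}}\xi_{\beta}\in\mathfrak{D}^{\bot}$, hence $[\xi_{\alpha},\xi_{\gamma}]\in\mathfrak{D}^{\bot}$; evaluating $\mathcal{L}_{\xi_{\alpha}}\eta^{\beta}=0$ on $\xi_{\gamma}$ gives $\eta^{\beta}[\xi_{\alpha},\xi_{\gamma}]=0$ for all $\beta$, and since a vector of $\mathfrak{D}^{\bot}$ is determined by its $\eta^{\beta}$-components this forces $[\xi_{\alpha},\xi_{\gamma}]=0$, whence $\nabla_{\xi_{\alpha}}\xi_{\gamma}=\nabla_{\xi_{\gamma}}\xi_{\alpha}$. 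At this stage I would run exactly the computation of Proposition \ref{prop condizioniS}: covariantly differentiating the constants $g(\xi_{\beta},\xi_{\gamma})$ along the characteristic fields and combining the resulting symmetries gives $g(\nabla_{\xi_{\alpha}}\xi_{\beta},\xi_{\gamma})=0$, which together with $\nabla_{\xi_{\alpha}}\xi_{\beta}\in\mathfrak{D}^{\bot}$ yields $\nabla_{\xi_{\alpha}}\xi_{\beta}=0$. Substituting back proves $\nabla_{X}\xi_{\alpha}=-\varepsilon_{\alpha}\varphi X$, hence $d\eta^{\alpha}=\Phi$, so $M$ is an indefinite almost $\mathcal{S}$-manifold; a final appeal to Proposition \ref{PropCondS}, using that $M$ satisfies (\ref{eq07}), gives normality and completes the proof.
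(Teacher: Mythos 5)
Your proof is correct, and its outer skeleton (first prove $d\eta^{\alpha}=\Phi$, so that $M$ becomes an indefinite almost $\mathcal{S}$-manifold, then invoke Proposition \ref{PropCondS} together with (\ref{eq07}) to get normality) coincides with the paper's; but the technical core — how $d\eta^{\alpha}=\Phi$ is obtained — is genuinely different. The paper never computes $\nabla\xi_{\alpha}$: it first deduces $d\Phi=0$ and, via Proposition \ref{Nphi3} and the Killing hypothesis, $\mathcal{L}_{\xi_{\alpha}}\varphi=0$; it then substitutes everything into the general formula (\ref{eq-01}), compares with (\ref{condfi}) to isolate $g(N(Y,Z),\varphi X)$, sets $Y=\xi_{\beta}$ and uses $\varphi N(\xi_{\beta},Z)=0$ to conclude $d\eta^{\beta}(\varphi Z,X)=\Phi(\varphi Z,X)$, and finally extends this to arbitrary arguments through $i_{\xi_{\alpha}}d\eta^{\beta}=0$, i.e.\ the Cartan-formula reading of $\mathcal{L}_{\xi_{\alpha}}\eta^{\beta}=0$. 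You instead work entirely at the level of $\nabla\xi_{\alpha}$: the Killing condition turns $d\eta^{\alpha}(X,Y)$ into $\varepsilon_{\alpha}g(\nabla_{X}\xi_{\alpha},Y)$, equation (\ref{eq07}) evaluated at $Y=\xi_{\beta}$ determines $\nabla_{X}\xi_{\beta}$ up to its $\mathfrak{D}^{\bot}$-component, and the hypothesis $\mathcal{L}_{\xi_{\alpha}}\eta^{\beta}=0$ — used to force $[\xi_{\alpha},\xi_{\gamma}]=0$, correctly replacing the appeal to $d\eta^{\gamma}=\Phi$ made at that point in Proposition \ref{prop condizioniS}, which is not yet available to you — kills that component, yielding $\nabla_{X}\xi_{\alpha}=-\varepsilon_{\alpha}\varphi X$ and hence $d\eta^{\alpha}=\Phi$. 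Your route is more elementary and transparent: it bypasses (\ref{eq-01}) and the Nijenhuis tensor altogether, and it produces the hallmark identity $\nabla_{X}\xi_{\alpha}=-\varepsilon_{\alpha}\varphi X$ of Remark \ref{nablaS-mani} as a by-product; the paper's computation, on the other hand, stays within the machinery it has already built and exhibits explicitly how the defect tensor $g(N(\xi_{\beta},Z),\varphi X)$ measures the failure of $d\eta^{\beta}=\Phi$. Your reuse of the Proposition \ref{prop condizioniS} computation is non-circular, since you establish its prerequisites ($[\xi_{\alpha},\xi_{\beta}]=0$ and the constancy of $g(\xi_{\beta},\xi_{\gamma})$) independently before invoking it.
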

\begin{proof}
Being $3d\Phi(X,Y,Z)=\mathfrak{S}_{X,Y,Z}({\nabla}_{X}\Phi)(Y,Z)$, from (\ref{condfi}) we get $
d\Phi=0$ and $(\mathcal{L}_{{\xi}_{\alpha}}\Phi)(X,Y)=0 $, since $\mathcal{L}_{{\xi}_{\alpha}}\Phi=i_{{\xi}_{\alpha}}d\Phi+di_{{\xi}_{\alpha}}\Phi$.
Proposition \ref{Nphi3} implies 
$(\mathcal{L}_{{\xi}_{\alpha}}{g})(X,\varphi Y)+{g}(X,(\mathcal{L}_{{\xi}_{\alpha}}{\varphi})Y)=0$,
 for any $\alpha\in\{1,\ldots,r\}$ and $X,Y\in\Gamma(T{M})$.
Hence, being ${\xi}_{\alpha}$ a Killing vector field, we find
$\mathcal{L}_{{\xi}_{\alpha}}{\varphi}=0$ and then $\eta^{\beta}([\xi_{\alpha},\varphi Y])=0$, for any $\alpha,\beta\in\{1,\ldots,r\}$.
In these hypotheses, (\ref{eq-01}) becomes 
\begin{align*}
2{g}(({\nabla}_{X}{\varphi})Y,Z)   &
={g}(N(Y,Z),{\varphi}X)
+2\varepsilon_{\alpha}[d{\eta}^{\alpha}({\varphi}Y,Z){\eta}^{\alpha}(X)-d{\eta
}^{\alpha}({\varphi}Z,Y){\eta}^{\alpha}(X)\\
&\quad +d{\eta
}^{\alpha}({\varphi}Y,X)  {\eta}^{\alpha}(Z)-d{\eta
}^{\alpha}({\varphi}Z,X)  {\eta}^{\alpha}(
Y)].
\end{align*}
On the other hand, (\ref{condfi}) implies
${g}(Y,({\nabla}_{X}{\varphi})Z) =\bar{\eta}(Y){g}({\varphi}X,{\varphi}Z)-\bar{\eta}(Z){g}({\varphi}X,{\varphi}Y)$,
therefore we deduce
\begin{align*}
{g}(N(Y,Z),{\varphi}X)&=
-2\varepsilon_{\alpha}[(d{\eta}^{\alpha}({\varphi}Y,Z) -d{\eta
}^{\alpha}({\varphi}Z,Y)){\eta}^{\alpha}(X)+(d{\eta
}^{\alpha}({\varphi}Y,X)-{g}({\varphi}X,{\varphi}Y))  {\eta}^{\alpha}(Z)\\
& \quad -(d{\eta
}^{\alpha}({\varphi}Z,X)-{g}({\varphi}X,{\varphi}Z))  {\eta}^{\alpha}(
Y)].
\end{align*}
Putting $Y={\xi}_{\beta}$ in the above equation, we get
\begin{equation}
{g}(N({\xi}_{\beta},Z),{\varphi}X)=
2\varepsilon_{\beta}(d{\eta
}^{\beta}({\varphi}Z,X)-{g}({\varphi}X,{\varphi}Z)).\label{Nphi1}
\end{equation}
Since $N({\xi}_{\beta},Z)=-[{\xi}_{\beta},Z]-{\varphi}[{\xi}_{\beta},{\varphi}Z]+{\xi}_{\beta}({\eta}^{\alpha}(Z)){\xi}_{\alpha}$, then
${\varphi}N({\xi}_{\beta},Z)=(\mathcal{L}_{{\xi}_{\alpha}}{\varphi})Z-{\eta}^{\alpha}[{\xi}_{\beta},{\varphi}Z]{\xi}_{\alpha}=0$ and
(\ref{Nphi1}) gives
$d{\eta
}^{\beta}({\varphi}Z,X)={g}({\varphi}X,{\varphi}Z)=\Phi({\varphi}Z,X)$.
Finally, $\mathcal{L}_{\bar{\xi}_{\alpha}}\bar{\eta}^{\beta}=0$ implying $i_{\bar{\xi}_{\alpha}} d\bar{\eta}^{\beta}=0$ and being $Y=-{\varphi}^{2}Y+{\eta}^{\alpha}(Y){\xi}_{\alpha}$, for any $Y\in\Gamma(T{M})$, we obtain $d{\eta}^{\beta}(Y,X)=-d{\eta}^{\beta}({\varphi}^{2}Y,X)+{\eta}^{\alpha}(Y)d{\eta}^{\beta}
	({\xi}_{\alpha},X)=-\Phi({\varphi}^{2}Y,X)=\Phi(Y,X)$.
 Then ${M}$ is an indefinite almost $\mathcal{S}$-manifold and we apply Proposition \ref{PropCondS}. 
\end{proof}

\section{Examples of indefinite $\mathcal{S}$-manifolds}

We describe some examples of indefinite $\mathcal{S}$-manifolds, where the characteristic vector fields are either timelike or spacelike or of both types.

\begin{example}\emph{
We consider $\mathbb{R}^{6}$ with its standard coordinates $\{x^{1},x^{2},y^{1},y^{2},z^{1},z^{2}\}$. We introduce on $\mathbb{R}^{6}$ an indefinite $g.f.f$-structure $(\varphi,\xi_{1},\xi_{2},\eta^{1},\eta^{2},g)$ by setting
\begin{align*}
\xi_{\alpha}=\frac{\partial}{\partial z^{\alpha}}, \qquad \eta^{\alpha}=dz^{\alpha}-\sum_{i=1}^{2}y^{i}dx^{i},\qquad \alpha\in\{1,2\},
\end{align*}
\begin{align*}
	g=-\sum_{\alpha=1}^{2}\eta^{\alpha}\otimes\eta^{\alpha}+\frac{1}{2}\sum_{i=1}^{2}((dx^{i})^{2}+(dy^{i})^{2}),
\end{align*}
and $\varphi$ given, with respect to the frame $\{\frac{\partial}{\partial x^{1}},\frac{\partial}{\partial x^{2}},\frac{\partial}{\partial y^{1}},\frac{\partial}{\partial y^{2}},\xi_{1},\xi_{2}\}$, by the matrix
\begin{equation*}
F=
\left(
\begin{array}{ccc}
0    & I_{2}     & 0 \\
-I_{2} & 0  & 0\\
0  & Y  & 0
\end{array}
\right),\qquad {\rm where}\quad Y=
\left(
\begin{array}{cc}
y^{1} & y^{2} \\
 y^{1} & y^{2}
\end{array}
\right).
\end{equation*}
We put $M=(\mathbb{R}^{6}_{2},\varphi,\xi_{1},\xi_{2},\eta^{1},\eta^{2},g)$.
A straightforward computation shows that $g$ is a metric tensor field. Firstly we check that $g$ is non-degenerate and then we compute its index. The matrix $G$ of $g$ is given by
\begin{equation*}
G=
\left(
\begin{array}{cccccc}
\frac{1}{2}-2(y^1)^2 & -2y^1y^2           & 0           & 0           & y^1 & y^1 \\
-2y^1y^2           & \frac{1}{2}-2(y^2)^2 & 0           & 0           & y^2 & y^2 \\
0                    & 0                    & \frac{1}{2} & 0           & 0   & 0   \\
0                    & 0                    & 0           & \frac{1}{2} & 0   & 0   \\
y^1                  & y^2                  & 0           & 0           & -1  & 0 \\
y^1                  & y^2                  & 0           & 0           & 0   & -1 
\end{array}
\right),
\end{equation*}
and $detG=\frac{1}{16}\neq 0$.
Now, to determine the index of $g$, we look for the eigenvalues of $G$. Since
\begin{align*}
det(G-\lambda I)=-(\frac{1}{2}-\lambda)^{3}(1+\lambda)(\lambda^{2}+(2(y^{1})^{2}+2(y^{2})^{2}+\frac{1}{2}) \lambda -\frac{1}{2}),
\end{align*}
we find that the index of $g$ is two; therefore $g$ is a semi-Riemannian metric of the index $2$ on $\mathbb{R}^{6}$. We remark that $\xi_{1}$ and $\xi_{2}$ are timelike vector fields.
It is easy to prove that $M$ is an indefinite $\mathcal{S}$-manifold.}
\end{example}
\begin{example}\emph{
The second example of an indefinite $\mathcal{S}$-manifold is $M=(\mathbb{R}^{6}_{2},\varphi,\xi_{\alpha},\eta^{\alpha},g)$, where, for any $\alpha \in \{1,2\}$, we put
\begin{equation*}
	\xi_{\alpha}:=\frac{\partial}{\partial z^{\alpha}}, \qquad \eta^{\alpha}:=dz^{\alpha}-\sum_{i=1}^{2} \tau_{i}y^{i}dx^{i},
\end{equation*}
$\varphi$, $g$ are given by
\begin{equation*}
F=
\left(
\begin{array}{ccc}
0    & I_{2}     & 0 \\
-I_{2} & 0  & 0\\
0  & Y  & 0
\end{array}
\right),\quad {\rm where} \quad Y=
\left(
\begin{array}{cc}
-y^{1} & y^{2} \\
 -y^{1} & y^{2}
\end{array}
\right),
\end{equation*}
and
\[
g=\sum\nolimits_{\alpha=1}^{2}\eta^{\alpha}\otimes\eta^{\alpha}+\frac{1}{2}\sum\nolimits_{i=1}^{2}\tau_{i}((dx^{i})^{2}+(dy^{i})^{2}),
\]
respectively, where $\tau_{i}=\mp1$ according to whether $i=1$ or $i=2$. 
Moreover, the symmetric $(0,2)$-type tensor field $g$ is a semi-Riemannian metric because
$detG=\frac{1}{16}\neq 0$.
Therefore $g$ is non degenerate, and 
\[
det(G-\lambda I)=-(\frac{1}{2}+\lambda)^{2}(\frac{1}{2}-\lambda)(\lambda-1)(\lambda^{2}-(\frac{3}{2}+2(y^{1})^{2}+2(y^{2})^{2})\lambda+\frac{1}{2}),
\]
so, since the signs of eigenvalues are independent from the coordinates, the index of $g$ is constant. We note that in this example $\xi_{1}$ and $\xi_{2}$ are spacelike.
One proves that $M$ is an indefinite $\mathcal{S}$-manifold.}
\end{example}
\begin{example}\label{terzo}\emph{
The third example is $M=(\mathbb{R}^{4}_{1},\varphi,\xi_{1},\xi_{2},\eta^{1},\eta^{2},g)$ constructed as follows. Denoting the standard coordinates with $\{x,y,z^{1},z^{2}\}$, we endow $\mathbb{R}^{4}$ with the structure $(\varphi,\xi_{1},\xi_{2},\eta^{1},\eta^{2},g)$ where
\[
\xi_{\alpha}=\frac{\partial}{\partial z^{\alpha}}, \quad \eta^{\alpha}=dz^{\alpha}+ydx,
\]
for any $\alpha\in\{1,2\}$ and where the tensor fields $\varphi$ and $g$ are given by
\begin{equation*}
F:=\left(
\begin{array}{cccc}
0&-1&0&0\\
1&0&0&0\\ 
0&y&0&0\\
0&y&0&0
\end{array}
\right)\qquad
G:=\left(
\begin{array}{cccc}
\frac{1}{2}&0&y&-y\\
0&\frac{1}{2}&0&0\\ 
y&0&1&0\\
-y&0&0&-1
\end{array}
\right)
\end{equation*}
respectively. 
An immediate computation shows that $g$ is non-degenerate and its index is constant. In fact, we have
$detG=-\frac{1}{4}$,
and
\begin{equation*}
det(G-\lambda I)=(\frac{1}{2}-\lambda)(\lambda^{3}-\frac{1}{2}\lambda^{2}-(2y^{2}+1)\lambda+\frac{1}{2}),
\end{equation*}
hence $detG\neq0$ and, using Cartesio's rule, we deduce that the index is $1$. Therefore, the tensor field $g$ is a Lorentzian metric. Now, we observe that $\xi_{1}$ is a spacelike vector field while $\xi_{2}$ is a timelike vector field. 
One can check that $M$ is an indefinite $\mathcal{S}$-manifold.}
\end{example}
\section{Sectional curvature and ${\varphi}$-sectional curvature}
In this section,  we look for some results about the sectional curvature of indefinite $\mathcal{S}$-manifolds. Following the notations in (\cite{KN}), for the curvature tensor ${R}$ we have
${R}(  X,Y,Z)  ={\nabla}_{X}{\nabla}_{Y}Z-{\nabla
}_{Y}{\nabla}_{X}Z-{\nabla}_{[  X,Y]  }Z,$ and
${R}(  X,Y,Z,W)  ={g}(  {R}(  Z,W,Y)
,X)$,
for any $X,Y,Z,W\in\Gamma(  T{M})  $. 

A two-dimensional subspace $\pi$ of the tangent space $T_{p}{M}$ is called
\emph{non-degenerate} if and only if we have
$\Delta(  \pi)  ={g}_{p}(  X,X)  {g}_{p}(
Y,Y)$-${g}_{p}(  X,Y)  ^{2}\neq0$ for any basis $\{X,Y\}$ of $\pi$. We know that if $\pi$
is a non-degenerate $2$-plane of $T_{p}{M}$ then we can define the
\emph{sectional curvature }$K_{p}(  \pi)  $ at $p$ with respect
to the $2$-plane $\pi$, putting
\[
K_{p}(  \pi)  =\frac{{R}_{p}(  X,Y,X,Y)  }%
{\Delta(  \pi)  }=\frac{{g}_{p}(  {R}_{p}(
X,Y,Y)  ,X)  }{\Delta(  \pi)  } , \]
where $\pi=span\{  X,Y\}  $. In the following we denote
$K_{p}(  \pi)  =K_{p}(  X,Y)  $.
\begin{proposition}\label{ker}
In an indefinite $\mathcal{S}$-manifold $(  {M},{\varphi
},{\xi}_{\alpha},{\eta}^{\alpha},{g})$ one has:
\begin{itemize}
\item [a)] the distribution $\ker{\varphi}$ is integrable and flat;
\item [b)] the sectional curvatures $K(  X,{\xi}_{\alpha})=\varepsilon_{\alpha}  $, for any $\alpha\in\{  1,\ldots,r\}  $, and non lightlike $X\in
\operatorname{Im}{\varphi}$.
\end{itemize}
\end{proposition}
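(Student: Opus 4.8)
The plan is to exploit the two structural identities available for an indefinite $\mathcal{S}$-manifold: on the one hand $\nabla_{\xi_{\alpha}}\xi_{\beta}=0$ together with $[\xi_{\alpha},\xi_{\beta}]=0$ (from Proposition \ref{prop condizioniS}), and on the other hand $\nabla_{X}\xi_{\alpha}=-\varepsilon_{\alpha}\varphi X$ with $\nabla_{\xi_{\alpha}}\varphi=0$ (Remark \ref{nablaS-mani} and Proposition \ref{prop condizioniS}). Everything then reduces to short curvature computations.

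For a), recall that $\ker\varphi=\mathfrak{D}^{\bot}$ is globally spanned by the characteristic vector fields $\xi_{\alpha}$. A general section has the form $X=f^{\alpha}\xi_{\alpha}$; expanding the bracket $[f^{\alpha}\xi_{\alpha},g^{\beta}\xi_{\beta}]$ by the Leibniz rule and using $[\xi_{\alpha},\xi_{\beta}]=0$ leaves only terms proportional to the $\xi_{\gamma}$, so the bracket stays in $\ker\varphi$ and the distribution is integrable. For flatness I note that $\nabla_{\xi_{\alpha}}\xi_{\beta}=0$ makes the integral leaves totally geodesic, so their intrinsic Levi-Civita connection is the restriction of $\nabla$; substituting $\nabla_{\xi_{\alpha}}\xi_{\beta}=0$ and $[\xi_{\alpha},\xi_{\beta}]=0$ into the definition of $R$ gives $R(\xi_{\alpha},\xi_{\beta})\xi_{\gamma}=0$, hence each leaf is flat.

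For b), the core is to compute $R(X,\xi_{\alpha})\xi_{\alpha}$ for $X\in\operatorname{Im}\varphi=\mathfrak{D}$. Writing $R(X,\xi_{\alpha})\xi_{\alpha}=\nabla_{X}\nabla_{\xi_{\alpha}}\xi_{\alpha}-\nabla_{\xi_{\alpha}}\nabla_{X}\xi_{\alpha}-\nabla_{[X,\xi_{\alpha}]}\xi_{\alpha}$, the first term vanishes by $\nabla_{\xi_{\alpha}}\xi_{\alpha}=0$; substituting $\nabla_{X}\xi_{\alpha}=-\varepsilon_{\alpha}\varphi X$ in the remaining two terms and using $\nabla_{\xi_{\alpha}}\varphi=0$ to move $\varphi$ past $\nabla_{\xi_{\alpha}}$, the two contributions containing $\nabla_{\xi_{\alpha}}X$ cancel and one is left with $R(X,\xi_{\alpha})\xi_{\alpha}=-\varphi^{2}X$. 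Since $X\in\mathfrak{D}$ satisfies $\eta^{\beta}(X)=0$, this is simply $R(X,\xi_{\alpha})\xi_{\alpha}=X$. Consequently $R(X,\xi_{\alpha},X,\xi_{\alpha})=g(R(X,\xi_{\alpha})\xi_{\alpha},X)=g(X,X)$, while $\Delta(X,\xi_{\alpha})=g(X,X)g(\xi_{\alpha},\xi_{\alpha})-g(X,\xi_{\alpha})^{2}=\varepsilon_{\alpha}g(X,X)$, because $g(\xi_{\alpha},\xi_{\alpha})=\varepsilon_{\alpha}$ and $g(X,\xi_{\alpha})=0$. Dividing, and using that $X$ non-lightlike forces $g(X,X)\neq0$, yields $K(X,\xi_{\alpha})=1/\varepsilon_{\alpha}=\varepsilon_{\alpha}$.

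The computations are elementary; the only step demanding genuine care is the term $\nabla_{[X,\xi_{\alpha}]}\xi_{\alpha}$ in $R(X,\xi_{\alpha})\xi_{\alpha}$, where one must expand $[X,\xi_{\alpha}]=-\varepsilon_{\alpha}\varphi X-\nabla_{\xi_{\alpha}}X$ and keep track of the sign $\varepsilon_{\alpha}$ (using $\varepsilon_{\alpha}^{2}=1$) so that the spurious $\nabla_{\xi_{\alpha}}X$ contributions cancel exactly and only $-\varphi^{2}X$ survives.
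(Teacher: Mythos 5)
Your proof is correct and follows essentially the same route as the paper: part a) is the paper's argument verbatim, and part b) is the same direct curvature computation built on $\nabla_{X}\xi_{\alpha}=-\varepsilon_{\alpha}\varphi X$, $\nabla_{\xi_{\alpha}}\xi_{\beta}=0$ and $[\xi_{\alpha},\xi_{\beta}]=0$. The only cosmetic difference is that you commute $\varphi$ past $\nabla_{\xi_{\alpha}}$ using $\nabla_{\xi_{\alpha}}\varphi=0$, whereas the paper expands $\nabla_{\xi_{\alpha}}(\varphi X)$ by torsion-freeness and cancels via $\mathcal{L}_{\xi_{\alpha}}\varphi=0$, computing the slightly more general $R(\xi_{\alpha},X,\xi_{\beta})=\varepsilon_{\alpha}\varepsilon_{\beta}\varphi^{2}X$; both give $R(X,\xi_{\alpha},\xi_{\alpha})=-\varphi^{2}X=X$ and hence $K(X,\xi_{\alpha})=\varepsilon_{\alpha}$.
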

\begin{proof}
For $X,Y\in\ker{\varphi}$ we have $X=f^{\alpha}{\xi}_{\alpha}$, $Y=t^{\beta}{\xi}_{\beta}$ then $[X,Y]=[f^{\alpha}{\xi}_{\alpha},t^{\beta}{\xi}_{\beta}]=f^{\alpha}{\xi}_{\alpha}(
	t^{\beta}){\xi}_{\beta}-t^{\beta}{\xi}_{\beta}(f^{\alpha}){\xi}_{\alpha}\in\ker{\varphi}$
and $\ker{\varphi}$ is integrable. Furthermore, since ${\nabla}_{{\xi}_{\alpha}}{\xi}_{\beta}=0$ and $[{\xi}_{\alpha},{\xi}_{\beta}]=0$, we have ${R}({\xi}_{\alpha},{\xi}_{\beta},{\xi}_{\gamma})=0$ and $\ker{\varphi}$ is flat. Note that a) holds also for indefinite almost $\mathcal{S}$-manifolds. Now, being ${M}$ an indefinite $\mathcal{S}$-manifold, we know that $
{\nabla}_{X}{\xi}_{\alpha}=-\varepsilon_{\alpha}{\varphi}X$,
$\mathcal{L}_{{\xi}_{\alpha}}{\varphi}=0$ and we have
\begin{align*}
{R}(  {\xi}_{\alpha},X,{\xi}_{\beta})   
& =-\varepsilon_{\beta}{\nabla}_{{\xi}_{\alpha}}(  {\varphi
}X)  +\varepsilon_{\beta}{\varphi}[  {\xi}_{\alpha
},X]  =\varepsilon_{\beta}(  {\varphi}[  {\xi}_{\alpha
},X]  -[  {\xi}_{\alpha},{\varphi}X]  -{\nabla
}_{{\varphi}X}{\xi}_{\alpha}) =\varepsilon_{\beta}\varepsilon_{\alpha}{\varphi}^{2}X.
\end{align*}
So, for $X\in\operatorname{Im}{\varphi}$, $X$ non lightlike, we have
$K(  X,{\xi}_{\alpha})=-\frac{\varepsilon_{\alpha}{g}(  {\varphi}^{2}X,X)  }%
{{g}(  X,X)  }=\varepsilon_{\alpha}$. 

\end{proof}

As usual, we say that a $2$-plane $\pi$ in $T_{p}{M}$, $p\in{M}$, is a ${\varphi}$-\emph{plane} if 
$\pi=span\{  X,{\varphi}X\}$ with $X\in\mathfrak{D}_{p}$, and the sectional curvature at $p$ of such a plane, with  $X$ a non lightlike vector, is said the ${\varphi}$-\emph{sectional curvature} at $p$ and is denoted by 
$H_{p}(  X)  $.

We shall prove that on an indefinite $\mathcal{S}$-manifold, as in the Sasakian
case, the ${\varphi}$-sectional curvatures determine the sectional
curvatures. 

As in \cite{Bl2}, we define a tensor field of type (0,4) given for any
$X,Y,Z,W$ in $\Gamma(  T{M})  $ by
\begin{align*}
P(  X,Y;Z,W)   & =\Phi(  X,Z)  {g}(
Y,W)  -\Phi(  X,W)  {g}(  Y,Z)  -\Phi(  Y,Z)  {g}(  X,W)  +\Phi(  Y,W)
{g}(  X,Z).
\end{align*}
The following lemmas can be easily proved.
\begin{lemma}
\label{lemma not}Let $(  {M},{\varphi},{\xi
}_{\alpha},{\eta}^{\alpha},{g})  $ be an indefinite
$\mathcal{S}$-manifold. Then:
\begin{itemize}
\item[a)] $P(
X,Y;Z,W)  =-P(  Z,W;X,Y)  $, for any $X,Y,Z,W\in\Gamma(  T{M})  $,
\item[b)] $P(  X,Y;X,{\varphi}Y)  ={g}(  X,{\varphi
}Y)  ^{2}+{g}(  X,Y)  ^{2}-\varepsilon_{X}\varepsilon
_{Y}$, where $X,Y$ are unit vector fields in $\mathfrak{D}$ and $\varepsilon_{X}={g}(  X,X)  $ and $\varepsilon
_{Y}={g}(  Y,Y)  $.
\end{itemize}
\end{lemma}
\begin{proposition}
\label{compatibil con fi}Let $(  {M},{\varphi},{\xi
}_{\alpha},{\eta}^{\alpha},{g})  $ be an indefinite
$\mathcal{S}$-manifold. Then, putting $\varepsilon=\sum_{\alpha=1}^{r}\varepsilon_{\alpha}$, for any $X,Y,Z,W\in\Gamma(  T{M})$
\begin{align*}
&{g}(  {R}(  X,Y,{\varphi}Z)  ,W)
+{g}(  {R}(  X,Y,Z)  ,{\varphi}W)  =-\varepsilon P(  X,Y;Z,W)  -Q(  X,Y;Z,W) 
\end{align*}
where  
\begin{align*}
Q(  X,Y;Z,W)   & ={g}(  W,{\varphi}Y)  (
\varepsilon(  {g}(X,Z)-{g}({\varphi}X,{\varphi}Z))-
\bar{\eta}(  Z)\bar{\eta}(  X)  ) \\
& \quad -{g}(  W,{\varphi}X)  (  \varepsilon(  {g}(Y,Z)-{g}({\varphi}
Y,{\varphi}Z))-\bar{\eta}(  Z)\bar{\eta}(  Y)  ) \\
& \quad -{g}(  Z,{\varphi}Y)  (  \varepsilon(  {g}(X,W)-{g}({\varphi}X,{\varphi}W))  -\bar{\eta}(  X)  \bar{\eta}(
W)  ) \\
& \quad +{g}(  Z,{\varphi}X)  (  \varepsilon(  {g}(Y,W)-{g}({\varphi}Y,{\varphi}W)) -\bar{\eta}(  Y)  \bar{\eta}(
W)).
\end{align*}
 Moreover if
$X,Y,Z,W\in\mathfrak{D}$ then obviously $Q(  X,Y;Z,W)  =0$ and the
following statements hold:
\begin{itemize}
\item [a)] ${g}({R}({\varphi}X,{\varphi}Y,{\varphi
   }Z)  ,{\varphi}W)  ={g}(  {R}(
   X,Y,Z)  ,W) $;
\item [b)]
${g}(  {R}(  X,{\varphi}X,Y)  ,{\varphi
}Y)   ={g}(  {R}(  X,Y,X)  ,Y)  +
{g}(  {R}(  X,{\varphi}Y,X)  ,{\varphi}Y)-2\varepsilon P(  X,Y,X,{\varphi}Y)$;
\item [c)]
${g}(  {R}(  {\varphi}X,Y,{\varphi}X)
,Y)  ={g}(  {R}(  X,{\varphi}Y,X)
,{\varphi}Y)$.
\end{itemize}
\end{proposition}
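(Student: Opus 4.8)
**The plan is to derive the main identity from the fundamental curvature relation coming from the $\mathcal{S}$-structure, then specialize.** The essential input is equation (\ref{eq07}), which expresses $(\nabla_X\varphi)Y$ explicitly. The first step is to compute $\nabla_X\bigl((\nabla_Y\varphi)Z\bigr)$ and the analogous terms so as to obtain a formula for the ``second covariant derivative'' $(\nabla_X\nabla_Y\varphi)Z$. Subtracting the versions with $X,Y$ interchanged and using $(\nabla_{[X,Y]}\varphi)Z$ yields, by the Ricci-type identity, an expression for $R(X,Y,\varphi Z)-\varphi R(X,Y,Z)$, i.e. for the commutator of $R$ with $\varphi$. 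Concretely, from $(\nabla_X\nabla_Y\varphi - \nabla_Y\nabla_X\varphi - \nabla_{[X,Y]}\varphi)Z = R(X,Y,\varphi Z)-\varphi R(X,Y,Z)$, and the explicit right-hand side of (\ref{eq07}), I would differentiate, using $\nabla_X\bar\xi = -\sum_\alpha \varepsilon_\alpha\varphi X$ and $\nabla_X\xi_\alpha = -\varepsilon_\alpha\varphi X$ (Remark \ref{nablaS-mani}), together with $\nabla_X\bar\eta(Y)=g(Y,\nabla_X\bar\xi)$ and $(\nabla_X\Phi)(Y,Z)$ from (\ref{condfi}).

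\textbf{The second step is to pair the resulting $(1,3)$-identity with $W$ via $g$ to land in the $(0,4)$ formulation.} Taking $g(\,\cdot\,,W)$ of $R(X,Y,\varphi Z)-\varphi R(X,Y,Z)$ and using the skew-symmetry $g(\varphi U,W)=-g(U,\varphi W)$, the term $g(\varphi R(X,Y,Z),W)$ becomes $-g(R(X,Y,Z),\varphi W)$, so the left-hand side reorganizes exactly into $g(R(X,Y,\varphi Z),W)+g(R(X,Y,Z),\varphi W)$ as in the statement. The right-hand side, after collecting the terms produced by the differentiation, should sort into a piece proportional to $\varepsilon$ matching $-\varepsilon P(X,Y;Z,W)$ and the remaining $\bar\eta$- and characteristic-direction contributions that assemble into $-Q(X,Y;Z,W)$. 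I would verify the match by comparing the four $g(\,\cdot\,,\varphi\,\cdot\,)$ factors appearing in $P$ and in $Q$ term by term; the definition of $P$ already exhibits the $\Phi(\cdot,\cdot)g(\cdot,\cdot)$ pattern, and recalling $\Phi(U,V)=g(U,\varphi V)$ makes the identification routine.

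\textbf{The specialized statements follow by restriction and symmetry.} When $X,Y,Z,W\in\mathfrak{D}$ every factor $\bar\eta(\cdot)$ vanishes and $g(X,Z)-g(\varphi X,\varphi Z)=0$ by (\ref{eqz01}), so each summand of $Q$ carries a vanishing factor and $Q(X,Y;Z,W)=0$. For part a), I would apply the main identity twice: first replacing $Z$ by $\varphi Z$ and $W$ by $\varphi W$ to relate $g(R(X,Y,\varphi^2 Z),\varphi W)$ to $g(R(X,Y,\varphi Z),\varphi^2 W)$, then using $\varphi^2=-\mathrm{id}$ on $\mathfrak{D}$ and the first Bianchi identity / pair symmetry of $R$ to transfer the $\varphi$'s from the last two slots to the first two; iterating yields $g(R(\varphi X,\varphi Y,\varphi Z),\varphi W)=g(R(X,Y,Z),W)$. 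For b), one sets the four arguments to $X,\varphi X,Y,\varphi Y$ and combines the identity with the first Bianchi identity to rewrite $g(R(X,\varphi X,Y),\varphi Y)$; the extra $-2\varepsilon P(X,Y;X,\varphi Y)$ term is precisely the surviving $P$-contribution. Part c) is then an algebraic consequence of a) and b) together with the pair-symmetry of $R$.

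\textbf{The main obstacle I anticipate is the bookkeeping in the second covariant derivative.} Differentiating (\ref{eq07}) produces several terms involving $\nabla\bar\xi$, $\nabla\xi_\alpha$, and $(\nabla\Phi)$, and the antisymmetrization in $X,Y$ must be carried out carefully so that the $\varepsilon$-weighted part cleanly separates from the $\bar\eta$-weighted part; a sign error anywhere collapses the $P$/$Q$ split. I expect the cancellation of the ``non-$\mathfrak{D}$'' terms against each other to be the delicate point, and the cleanest check is to confirm at the end that setting all arguments in $\mathfrak{D}$ kills $Q$ and reduces the identity to the already-known Sasakian-type relation.
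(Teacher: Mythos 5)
Your approach is correct, and it supplies an argument that the paper actually omits: Proposition \ref{compatibil con fi} is stated there with no proof at all (it appears among the results declared to be easily provable), and the route you propose is the standard one, surely the intended one. Differentiating (\ref{eq07}) and using the Ricci identity $R(X,Y)\varphi Z-\varphi R(X,Y)Z=(\nabla_{X}\nabla_{Y}\varphi-\nabla_{Y}\nabla_{X}\varphi-\nabla_{[X,Y]}\varphi)Z$, together with $\nabla_{X}\bar{\xi}=-\varepsilon\,\varphi X$ and $g(\bar{\xi},\varphi Z)=0$, one finds after antisymmetrization that all terms of the type $\bar{\eta}(Z)\bigl(g(Y,\varphi X)+g(X,\varphi Y)\bigr)\bar{\xi}$ cancel, and pairing with $W$ (using $g(\varphi U,W)=-g(U,\varphi W)$ and $g(\varphi^{2}Y,W)=-g(\varphi Y,\varphi W)$) leaves exactly eight terms which match, one by one, the expansion of $-\varepsilon P(X,Y;Z,W)-Q(X,Y;Z,W)$. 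Your observation that $Q$ vanishes on $\mathfrak{D}$ because $\bar{\eta}|_{\mathfrak{D}}=0$ and $g(X,Z)-g(\varphi X,\varphi Z)=\varepsilon_{\alpha}\eta^{\alpha}(X)\eta^{\alpha}(Z)=0$ is also correct.

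Two execution points in the specializations need care. First, for a), the substitution you name, replacing $Z$ by $\varphi Z$ \emph{and} $W$ by $\varphi W$ simultaneously, is circular: since $P(X,Y;\varphi Z,\varphi W)=-P(X,Y;Z,W)$ on $\mathfrak{D}$, that double substitution merely reproduces the original identity. The productive move is the single substitution $W\mapsto\varphi W$, which gives $g(R(X,Y,\varphi Z),\varphi W)=g(R(X,Y,Z),W)-\varepsilon P(X,Y;Z,\varphi W)$; pair symmetry then transfers the $\varphi$'s to the first two slots, and one must check that the two accumulated $P$-corrections cancel, namely $P(Z,W;X,\varphi Y)+P(\varphi X,\varphi Y;Z,\varphi W)=0$ on $\mathfrak{D}$, which does hold. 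Second, your logical ordering of b) and c) should be reversed: c) follows from a) alone, by applying a) to the quadruple $(X,\varphi Y,X,\varphi Y)$ and using $\varphi^{2}=-\mathrm{id}$ on $\mathfrak{D}$, and it is precisely c) (or an equivalent manipulation) that you need inside the Bianchi computation for b), where the term $g(R(\varphi X,Y,\varphi X),Y)$ appears and must be converted into $g(R(X,\varphi Y,X),\varphi Y)$; the remaining $P$-terms combine via $P(\varphi X,Y;X,Y)=-P(X,Y;X,\varphi Y)$ to give the coefficient $-2\varepsilon P(X,Y;X,\varphi Y)$. With these adjustments your sketch completes into a full proof.
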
%
\begin{remark}
\emph{We remark that $\varepsilon$ can vanish only if $r$ is an even number and the number of timelike characteristic vector fields is equal to the number of spacelike characteristic vector fields. Moreover, $\varepsilon=0$ means that ${g}(\bar{\xi},\bar{\xi})=0$, i.e. $\bar{\xi}=\sum_{\alpha=1}^{r}\xi_{\alpha}$ is a lightlike vector field. }
\end{remark}

We put 
\[
B(  X,Y)  ={g}(  {R}(  X,Y,X),Y), \qquad X,Y\in\Gamma(TM)\]
and 
\[D(  X)  =B(  X,{\varphi}X), \qquad X\in\Gamma(  \mathfrak{D}).\]
The following Lemma, of which we omit the long proof, gives the useful expression of $B(
X,Y)  $, for any $X,Y\in\Gamma(
\mathfrak{D})  $.
\begin{lemma}
Let $(  {M},{\varphi},{\xi}_{\alpha},{\eta}^{\alpha}%
,{g})  $ be an indefinite $\mathcal{S}$-manifold. Then, for any
$X,Y\in\Gamma(   \mathfrak{D})  $,
\begin{align}
B(  X,Y)   & =\frac{1}{32}\{  3D(  X+{\varphi
}Y)  +3D(  X-{\varphi}Y)  -D(  X+Y)
\label{espr B}\\
& \quad   -D(  X-Y)  -4D(  X)  -4D(  Y)
+24\varepsilon P(  X,Y;X,{\varphi}Y)  \}  .\nonumber%
\end{align}
\end{lemma}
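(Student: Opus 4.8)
The plan is to prove this as a polarization identity: it is the indefinite $\mathcal{S}$-analogue of the classical Kähler polarization that recovers the full curvature tensor from the holomorphic sectional curvature, the only novelty being the correction term $24\varepsilon P(X,Y;X,\varphi Y)$, which is forced by the fact that $(M,\varphi,g)$ is not Kähler on $\mathfrak{D}$ but merely an indefinite $\mathcal{S}$-manifold. Since $D(U)=B(U,\varphi U)=g(R(U,\varphi U,U),\varphi U)$ is a homogeneous quartic form in $U\in\Gamma(\mathfrak{D})$, and the six arguments $X\pm\varphi Y$, $X\pm Y$, $X$, $Y$ together with the weights $3,3,-1,-1,-4,-4$ are precisely those that isolate the mixed component $B(X,Y)$, I would first expand each $D$-term and then simplify the resulting multilinear sum using the curvature symmetries and the identities of Proposition \ref{compatibil con fi}.

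Concretely, first I would record that on $\mathfrak{D}$ one has $\varphi^{2}=-I$, so that $\varphi(X+\varphi Y)=\varphi X-Y$, $\varphi(X-\varphi Y)=\varphi X+Y$ and $\varphi(X\pm Y)=\varphi X\pm\varphi Y$. Substituting these into $D(\cdot)=g(R(\cdot,\varphi\cdot,\cdot),\varphi\cdot)$ and expanding by multilinearity of $R$ and $g$ produces, for each of the six terms, a sum of quadrilinear curvature expressions in $X,Y,\varphi X,\varphi Y$. The contributions $-4D(X)$ and $-4D(Y)$ absorb the ``pure'' terms, while the skew-symmetry of $R$ in its first two and last two slots, together with the first Bianchi identity, cancels all the unbalanced terms containing three copies of $X$ (or of $Y$). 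The surviving terms are those balanced in $X$ and $Y$, namely of the types $g(R(X,Y,X),Y)$, $g(R(X,\varphi Y,X),\varphi Y)$, $g(R(X,\varphi X,Y),\varphi Y)$ and $g(R(\varphi X,Y,\varphi X),Y)$, together with terms in which $\varphi$ appears an odd number of times.

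The heart of the argument is then to reduce these surviving terms via Proposition \ref{compatibil con fi}. For $X,Y\in\mathfrak{D}$ the factor $Q$ vanishes and the main identity reads
\[
g(R(X,Y,\varphi Z),W)+g(R(X,Y,Z),\varphi W)=-\varepsilon P(X,Y;Z,W),
\]
which I would use to eliminate every expression carrying an odd number of $\varphi$'s, each such move contributing a multiple of $\varepsilon P$. Property (a) supplies the full invariance $g(R(\varphi X,\varphi Y,\varphi Z),\varphi W)=g(R(X,Y,Z),W)$, property (c) identifies $g(R(\varphi X,Y,\varphi X),Y)$ with $g(R(X,\varphi Y,X),\varphi Y)$, and property (b),
\[
g(R(X,\varphi X,Y),\varphi Y)=g(R(X,Y,X),Y)+g(R(X,\varphi Y,X),\varphi Y)-2\varepsilon P(X,Y;X,\varphi Y),
\]
collapses the mixed curvature terms onto $B(X,Y)=g(R(X,Y,X),Y)$. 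Collecting the metric-curvature part should yield exactly $32\,B(X,Y)$, while all the $\varepsilon P$ corrections, regrouped through the antisymmetry $P(X,Y;Z,W)=-P(Z,W;X,Y)$ of Lemma \ref{lemma not}(a), combine into $-24\varepsilon P(X,Y;X,\varphi Y)$; solving for $B(X,Y)$ then gives (\ref{espr B}).

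The main obstacle is purely organizational: the six quartic expansions generate a large number of quadrilinear curvature terms, and one must keep scrupulous track of signs and of the numerical coefficient accumulated by each $\varepsilon P$ contribution, so that they sum to precisely $24$. A secondary point to check is that, since $X,Y\in\mathfrak{D}$, every term involving $\bar{\eta}$ drops out — this is exactly the assertion $Q(X,Y;Z,W)=0$ — so that the whole correction is the pure $P$-term and no $\bar{\eta}$-dependent remainder survives.
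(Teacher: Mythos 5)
Your proposal is correct and is exactly the computation the paper intends: the paper explicitly omits ``the long proof'' of this Lemma, but the machinery it develops immediately beforehand --- Proposition \ref{compatibil con fi} (with $Q=0$ on $\mathfrak{D}$) together with its consequences a), b), c), and the tensor $P$ of Lemma \ref{lemma not} --- is precisely what you invoke, and carrying out your expansion does close with
$3D(X+\varphi Y)+3D(X-\varphi Y)-D(X+Y)-D(X-Y)-4D(X)-4D(Y)=32\,B(X,Y)-24\,\varepsilon P(X,Y;X,\varphi Y)$,
since the surviving mixed terms reduce, via the identity
$g(R(X,Y,\varphi Z),W)+g(R(X,Y,Z),\varphi W)=-\varepsilon P(X,Y;Z,W)$ and properties a)--c), to $B(X,Y)$, $B(X,\varphi Y)$ and multiples of $P(X,Y;X,\varphi Y)$ with coefficients $-16-12+4=-24$. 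One minor mechanical simplification: the unbalanced terms (odd degree in $Y$) cancel automatically in the pairs $D(X\pm\varphi Y)$ and $D(X\pm Y)$ under $Y\mapsto-Y$, so no appeal to the first Bianchi identity is needed for that step.
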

Using the previous Lemmas it is possible to compute the sectional curvature of a non degenerate 2-plane $\pi=span\{  X,Y\}$ of
$\mathfrak{D}_{p}$, as follows.  
\begin{proposition}\label{ultimo}
Let $(  {M},{\varphi},{\xi}_{\alpha},{\eta}^{\alpha}%
,{g})  $ be an indefinite $\mathcal{S}$-manifold and $p$ in
${M}$. We consider a non degenerate 2-plane $\pi=span\{  X,Y\}$ of
$\mathfrak{D}_{p}$, where $X$ and $Y$ are unit vectors of $\mathfrak{D}_{p}$.
Then the sectional curvature $K_{p}(  X,Y)$ is given by
\begin{align*}
K_{p}(  X,Y)   & =\frac{1}{32(  \varepsilon_{X}\varepsilon
_{Y}-{g}(  X,Y)  ^{2})  }\{  3(  \varepsilon
_{X}+\varepsilon_{Y}+2{g}(  X,{\varphi}Y)  )
^{2}H_{p}(  X+{\varphi}Y)  \\
& \quad +3(  \varepsilon_{X}+\varepsilon_{Y}-2{g}(  X,{\varphi
}Y)  )  ^{2}H_{p}(  X-{\varphi}Y)  -(  \varepsilon_{X}+\varepsilon_{Y}+2{g}(  X,Y)
)  ^{2}H_{p}(  X+Y) \\
& \quad -(  \varepsilon_{X}+\varepsilon_{Y}-2{g}(  X,Y)
)  ^{2}H_{p}(  X-Y)  -4H_{p}(  X)  -4H_{p}(
Y) \\
& \quad   +24\varepsilon(  {g}(  X,{\varphi}Y)
^{2}+{g}(  X,Y)  ^{2}-\varepsilon_{X}\varepsilon_{Y})
\}  .
\end{align*}
\end{proposition}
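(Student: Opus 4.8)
The plan is to express the sectional curvature $K_p(X,Y)$ directly in terms of $B(X,Y)$ and then invoke the Lemma providing formula (\ref{espr B}). By definition $K_p(X,Y) = B(X,Y)/\Delta(\pi)$, where $\Delta(\pi) = g(X,X)g(Y,Y) - g(X,Y)^2 = \varepsilon_X\varepsilon_Y - g(X,Y)^2$ since $X,Y$ are unit vectors. So the denominator $32(\varepsilon_X\varepsilon_Y - g(X,Y)^2)$ matches $32\,\Delta(\pi)$ once I substitute the bracketed expression for $B(X,Y)$ from (\ref{espr B}). The entire task therefore reduces to rewriting each term $D(X+\varphi Y)$, $D(X-\varphi Y)$, $D(X+Y)$, $D(X-Y)$, $D(X)$, $D(Y)$ appearing in (\ref{espr B}) in terms of the $\varphi$-sectional curvatures $H_p(\cdot)$.

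The key mechanism is the relation between $D$ and $H_p$. Recall $D(Z) = B(Z,\varphi Z) = g(R(Z,\varphi Z,Z),\varphi Z)$ for $Z\in\Gamma(\mathfrak{D})$, while the $\varphi$-sectional curvature of the $\varphi$-plane $\operatorname{span}\{Z,\varphi Z\}$ is $H_p(Z) = B(Z,\varphi Z)/\Delta(\operatorname{span}\{Z,\varphi Z\})$. First I would compute this normalizing denominator: for $Z\in\mathfrak{D}$ one has $g(\varphi Z,\varphi Z) = g(Z,Z)$ by (\ref{eqz01}) since $\eta^\alpha(Z)=0$, and $g(Z,\varphi Z)=0$ by skew-symmetry of $\varphi$, so $\Delta(\operatorname{span}\{Z,\varphi Z\}) = g(Z,Z)^2$. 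Hence $D(Z) = g(Z,Z)^2\,H_p(Z)$. This is the conversion factor I need for each of the six terms.

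Next I would evaluate $g(Z,Z)$ for each argument $Z \in \{X+\varphi Y,\; X-\varphi Y,\; X+Y,\; X-Y\}$. Using $g(X,X)=\varepsilon_X$, $g(Y,Y)=\varepsilon_Y$, $g(\varphi Y,\varphi Y)=\varepsilon_Y$, $g(\varphi Y,\varphi Y)=g(Y,Y)$, and $g(X,\varphi Y)$, $g(X,Y)$ as the relevant cross terms, I get $g(X\pm\varphi Y,\,X\pm\varphi Y)=\varepsilon_X+\varepsilon_Y\pm 2g(X,\varphi Y)$ and $g(X\pm Y,\,X\pm Y)=\varepsilon_X+\varepsilon_Y\pm 2g(X,Y)$. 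Squaring these gives precisely the coefficients $(\varepsilon_X+\varepsilon_Y\pm 2g(X,\varphi Y))^2$ and $(\varepsilon_X+\varepsilon_Y\pm 2g(X,Y))^2$ that multiply the corresponding $H_p$ terms in the statement, while $D(X)=\varepsilon_X^2 H_p(X)=H_p(X)$ and $D(Y)=H_p(Y)$ since $\varepsilon_X^2=\varepsilon_Y^2=1$. Finally, the term $24\varepsilon P(X,Y;X,\varphi Y)$ is rewritten via part b) of Lemma \ref{lemma not}, which gives $P(X,Y;X,\varphi Y)=g(X,\varphi Y)^2+g(X,Y)^2-\varepsilon_X\varepsilon_Y$, producing the last summand. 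Substituting all of this into (\ref{espr B}) and dividing by $\Delta(\pi)$ yields the claimed formula.

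The only genuine subtlety, and the step I would check most carefully, is the bookkeeping for the arguments $X\pm\varphi Y$ and $X\pm Y$: I must confirm that these vectors still lie in $\mathfrak{D}$ (which holds because $\mathfrak{D}$ is $\varphi$-invariant and a vector subspace) so that the identity $D(Z)=g(Z,Z)^2 H_p(Z)$ applies, and I must ensure that $g(Z,Z)\neq 0$ for each so that $H_p(Z)$ is defined — but this is exactly the non-lightlike hypothesis implicit in forming the $\varphi$-sectional curvatures, and for degenerate cases the terms drop out by continuity. Beyond that, the argument is a direct substitution of the conversion factor $g(Z,Z)^2$ into the six $D$-terms of (\ref{espr B}), with no further obstruction.
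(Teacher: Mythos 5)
Your strategy coincides with the paper's own proof: express $K_p$ through $B$, substitute (\ref{espr B}), convert each $D$-term into a $\varphi$-sectional curvature via the norm of its argument, and handle the last term with Lemma \ref{lemma not} b). However, both of your conversion identities have the wrong sign under the paper's stated conventions. Since the paper defines $R(X,Y,Z,W)=g(R(Z,W,Y),X)$ and $K_p(\pi)=R_p(X,Y,X,Y)/\Delta(\pi)$, the standard identity $g(R(X,Y,Z),W)=-g(R(X,Y,W),Z)$ gives $R_p(X,Y,X,Y)=g_p(R_p(X,Y,Y),X)=-g_p(R_p(X,Y,X),Y)=-B_p(X,Y)$, so $K_p(\pi)=-B_p(X,Y)/\Delta(\pi)$, not $+B_p(X,Y)/\Delta(\pi)$; applying this to the $\varphi$-plane of $Z$ yields $D_p(Z)=-g_p(Z,Z)^2H_p(Z)$, again opposite to your conversion factor. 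These two identities, with the minus signs, are precisely what the paper records at the start of its proof.

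Your two sign errors cancel in every $H_p$-term (which is why those coefficients come out right), but they do not cancel in the term $24\varepsilon P(X,Y;X,\varphi Y)$, which enters (\ref{espr B}) independently of the $D\to H$ conversion: a derivation with the correct identities produces $-24\varepsilon\bigl(g(X,\varphi Y)^2+g(X,Y)^2-\varepsilon_X\varepsilon_Y\bigr)$, the opposite of what you obtain. A cross-check shows the minus sign is the correct one: for $X,Y\in\mathfrak{D}_p$ orthonormal with $g(X,Y)=g(X,\varphi Y)=0$ in a space of pointwise constant $\varphi$-sectional curvature $c$, formula (\ref{equivalente}) gives $K_p(X,Y)=(c+3\varepsilon)/4$, which the $-24\varepsilon(\ldots)$ version reproduces, while the $+24\varepsilon(\ldots)$ version (your output, and the proposition as printed) gives $(c-3\varepsilon)/4$. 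So your argument lands on the printed formula only because two genuine sign mistakes compensate for what appears to be a sign misprint in the statement's last term; a proof valid under the paper's conventions must use $K_p=-B_p/\Delta$ and $D_p(Z)=-g_p(Z,Z)^2H_p(Z)$. A secondary point: your appeal to ``continuity'' for lightlike arguments does not work pointwise; when $g(X\pm\varphi Y,X\pm\varphi Y)$ or $g(X\pm Y,X\pm Y)$ vanishes, the corresponding $H_p$ is simply undefined, and the honest fix is either to keep $-D_p$ (always defined) in place of $g_p(Z,Z)^2H_p(Z)$ or to assume those four vectors non-lightlike, a hypothesis the paper also leaves implicit.
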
%
\begin{proof}
We note that if $X\in\mathfrak{D}_{p}$ we have $D_{p}(  X)  =B_{p}(  X,{\varphi}X)  ={g}%
_{p}(  {R}_{p}(  X,{\varphi}X,X)  ,{\varphi
}X)  =-{g}_{p}(  X,X)  ^{2}H_{p}(  X)$ and if $X$ and $Y$ are unit vectors of $\mathfrak{D}_{p}$, we find $$
{g}(  X+{\varphi}Y,X+{\varphi}Y)  =\varepsilon
_{X}+\varepsilon_{Y}+2{g}(  X,{\varphi}Y), \quad
{g}(  X+Y,X+Y)  =\varepsilon_{X}+\varepsilon_{Y}+2
{g}(  X,Y).$$
Being $\Delta(  \pi)=\varepsilon_{X}\varepsilon_{Y}-{g}_{p}(
X,Y)  ^{2}$, we get $K_{p}(  \pi)  =-{g}_{p}(  {R}_{p}(
X,Y,X)  ,Y) /\Delta(  \pi)=-B_{p}(
X,Y) /\Delta(  \pi)$.
Then, using (\ref{espr B}) and Lemma \ref{lemma not}, we get the required formula.
\end{proof}
 \begin{remark}
\label{Tensor1,3} \emph{We note that if $X\in\Gamma(  \mathfrak{D})
$ is a unit vector field we have 
\[
{R}(  {\xi}_{\alpha},X,
{\xi}_{\beta})  =-\varepsilon_{\beta}\varepsilon_{\alpha}X, \qquad 
{R}(  X,{\xi}_{\alpha},X)  =-\varepsilon_{X}\varepsilon
_{\alpha}\bar{\xi}.
\]
In fact, if $Y\in\Gamma(  T{M})  $, for any $\alpha\in\{  1,\ldots,r\}  $,
we have
\begin{align*}
{g}(  {R}(  X,{\xi}_{\alpha},X)  ,Y)   &
=-{g}(  {R}(  X,Y,{\xi}_{\alpha})  ,X) =\varepsilon_{\alpha}{g}(  {\nabla}_{X}(  {\varphi
}Y)  -{\nabla}_{Y}(  {\varphi}X)  -{\varphi
}[  X,Y]  ,X) \\
& =\varepsilon_{\alpha}{g}(  (  {\nabla}_{X}{\varphi
})  Y-(  {\nabla}_{Y}{\varphi})  X,X)  =\varepsilon_{\alpha}{g}(  -\bar{\eta}(  Y)  X-\bar
{\eta}(  X) \varphi^{2} Y,X) \\
& =-\varepsilon_{X}\varepsilon_{\alpha}\bar{\eta}(  Y)
   =-\varepsilon_{X}\varepsilon_{\alpha}%
{g}(  \bar{\xi},Y)  .
\end{align*}
Finally, if $X,Y\in\Gamma(\mathfrak{D})$ and $Z\in\Gamma(T{M})$ then we get
\[	{g}({R}(X,{\xi}_{\alpha},Y),Z)=-\varepsilon_{\alpha}{g}(Y,X)\bar{\eta}(Z)=-\varepsilon_{\alpha}{g}(Y,X){g}(\bar{\xi},Z).
\]}
\end{remark}
\begin{theorem}\label{richiamo}
The ${\varphi}$-sectional curvatures completely determine the sectional curvatures of
an indefinite $\mathcal{S}$-manifold.
\end{theorem}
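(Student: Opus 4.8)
The plan is to show that the sectional curvature of an arbitrary non-degenerate $2$-plane in $T_p{M}$ can be written entirely in terms of $\varphi$-sectional curvatures, by reducing every case to the formula already established in Proposition \ref{ultimo} for $2$-planes lying inside $\mathfrak{D}_p$. The orthogonal splitting $T_p{M}=\mathfrak{D}_p\oplus\mathfrak{D}^{\bot}_p$ lets me decompose any tangent vector $X=X'+\bar{X}$ with $X'\in\mathfrak{D}_p$ and $\bar{X}\in\ker{\varphi}$, where $\bar{X}=\eta^{\alpha}(X)\xi_{\alpha}$. Given a non-degenerate plane $\pi=\mathrm{span}\{X,Y\}$, the strategy is to expand $B(X,Y)={g}({R}(X,Y,X),Y)$ by multilinearity in these components and evaluate each of the resulting curvature terms.

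First I would exploit the curvature identities collected in Proposition \ref{ker} and Remark \ref{Tensor1,3}. These already give explicit values for every curvature expression involving at least one characteristic vector field: we have ${R}({\xi}_{\alpha},X,{\xi}_{\beta})=-\varepsilon_{\alpha}\varepsilon_{\beta}X$, ${R}(X,{\xi}_{\alpha},X)=-\varepsilon_X\varepsilon_{\alpha}\bar{\xi}$, and more generally ${g}({R}(X,{\xi}_{\alpha},Y),Z)=-\varepsilon_{\alpha}{g}(X,Y){g}(\bar{\xi},Z)$ for $X,Y\in\mathfrak{D}$. Thus when I expand $B(X,Y)$ into its $\mathfrak{D}$-part and its $\ker{\varphi}$-part, the only term not already determined by these identities is the purely tangential piece $B(X',Y')={g}({R}(X',Y',X'),Y')$ with $X',Y'\in\mathfrak{D}_p$; all the mixed terms reduce to polynomial expressions in the $\eta^{\alpha}(X),\eta^{\alpha}(Y)$ and in ${g},\Phi$ evaluated on $X',Y'$. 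Since $\ker{\varphi}$ is flat by Proposition \ref{ker}, the term with all four entries in $\ker{\varphi}$ vanishes outright.

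The remaining and decisive step is to handle $B(X',Y')$ where $\{X',Y'\}$ need not be an orthonormal pair and $\pi'=\mathrm{span}\{X',Y'\}$ need not be non-degenerate. Here Proposition \ref{ultimo} supplies the answer whenever $X',Y'$ are unit and $\pi'$ is non-degenerate, writing the quantity through the six $\varphi$-sectional curvatures $H_p$ of $X'\pm{\varphi}Y'$, $X'\pm Y'$, $X'$, $Y'$. The expression \eqref{espr B} for $B$ in fact holds for all $X',Y'\in\mathfrak{D}_p$, so I would apply it directly: it exhibits $B(X',Y')$ as a combination of the quantities $D(\,\cdot\,)$, each of which equals $-{g}(\,\cdot\,,\,\cdot\,)^2H_p(\,\cdot\,)$ on non-lightlike arguments, together with a $P$-term that is purely metric. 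The honest obstacle is bookkeeping rather than conceptual: I must verify that the combinations $X'\pm{\varphi}Y'$ and $X'\pm Y'$ entering \eqref{espr B} are non-lightlike (or argue by continuity/density that lightlike degeneracies form a lower-dimensional locus and the identity extends there), so that each $D$ is legitimately expressed via an $H_p$.

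Putting the pieces together, $K_p(\pi)=-B(X,Y)/\Delta(\pi)$ becomes a quotient whose numerator is a sum of $\varphi$-sectional-curvature terms (from $B(X',Y')$ via \eqref{espr B}) plus metric-and-$\eta$ terms (from the mixed pieces, computed through Remark \ref{Tensor1,3}) whose coefficients depend only on ${g}$, $\Phi$, and the $\eta^{\alpha}$, all of which are determined by the structure tensors independently of curvature. Hence every sectional curvature is a universal function of the $\varphi$-sectional curvatures and the structure, which is exactly the assertion of Theorem \ref{richiamo}. I would close by remarking that in the fully non-degenerate tangential case this reduces to Proposition \ref{ultimo}, so the theorem is a direct globalization of that computation to arbitrary planes.
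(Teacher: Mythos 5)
Your proposal is correct and takes essentially the same route as the paper: decompose $X,Y$ into $\mathfrak{D}_p$- and $\ker\varphi$-components, expand ${g}({R}(X,Y,X),Y)$ by multilinearity, evaluate all mixed terms via Proposition \ref{ker}, Remark \ref{Tensor1,3} and the Bianchi identity, and reduce the purely tangential term to the formula of Proposition \ref{ultimo} (equivalently, to (\ref{espr B})). The only cosmetic difference is that the paper first renormalizes the $\mathfrak{D}$-parts to unit vectors $Z,W$ and routes the tangential term through $K_p(Z,W)$, whereas you feed (\ref{espr B}) the raw components and explicitly flag the lightlike-degeneracy issue, which the paper leaves implicit.
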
%
\begin{proof}
We show that for any $p\in{M}$ and for any non degenerate 2-plane $\pi=span\{X,Y\}$ in
$T_{p}(  {M})  $ the sectional curvature $K_{p}(
X,Y)  $ is uniquely determined by the
${\varphi}$-sectional curvature. In the sequel of the proof we suppose
that $p\in{M}$ is fixed. If $X,Y\in\mathfrak{D}_{p}$, then we apply the previous Proposition and if $X$ or $Y$ is ${\xi}_{\alpha}$, for
any $\alpha\in\{1,\ldots,r\}$, we have already seen that $K_{p}(  X,Y)
=\varepsilon_{\alpha}$. If $X,Y\in T_{p}{M}$, they
can be written in the following way:
\[
X=aZ+{\eta}^{\alpha}(X){\xi}_{\alpha},\quad Y=bW+{\eta}^{\alpha}(Y){\xi}_{\alpha},
\]
where $Z,W\in\mathfrak{D}$, ${g}_{p}(  Z,Z)
=\varepsilon_{Z} $, ${g}_{p}(  W,W)  =\varepsilon_{W}$, and $a$ and $b$ must satisfy:
\[
a^{2}\varepsilon_{Z}=\varepsilon
_{X}-\varepsilon_{\alpha}(  {\eta
}^{\alpha}(  X))^{2},\quad b^{2}\varepsilon
_{W}=\varepsilon_{Y}-\varepsilon_{\alpha}(
{\eta}^{\alpha}(  Y)  )  ^{2}.
\] 
Therefore, we compute
\begin{align}
{g}_{p}({R}_{p}&( X,Y,X),Y)=a^{2}b^{2}{g}_{p}({R}_{p}( Z,W,Z),W)+2a^{2}b\:{\eta}^{\beta}(Y){g}_{p}({R}_{p}( Z,W,Z),{\xi}_{\beta})\label{futuro2}\\
& +2ab^{2}{\eta}^{\alpha}(X){g}_{p}({R}_{p}( Z,W,{\xi}_{\alpha}),W)+2ab{\eta}^{\alpha}(X){\eta}^{\beta}(Y){g}_{p}({R}_{p}( Z,W,{\xi}_{\alpha}),{\xi}_{\beta})\nonumber\\
& +a^{2}{\eta}^{\beta}(Y){\eta}^{\delta}(Y){g}_{p}({R}_{p}( Z,{\xi}_{\beta},Z),{\xi}_{\delta})+2ab{\eta}^{\beta}(Y){\eta}^{\alpha}(X){g}_{p}({R}_{p}( Z,{\xi}_{\beta},{\xi}_{\alpha}),W)\nonumber\\
&+2a{\eta}^{\beta}(Y){\eta}^{\alpha}(X){\eta}^{\delta}(Y){g}_{p}({R}_{p}( Z,{\xi}_{\beta},{\xi}_{\alpha}),{\xi}_{\delta})+b^{2}{\eta}^{\alpha}(X){\eta}^{\gamma}(X){g}_{p}({R}_{p}( {\xi}_{\alpha},W,{\xi}_{\gamma}),W)\nonumber\\
& +2b{\eta}^{\alpha}(X){\eta}^{\beta}(Y){\eta}^{\gamma}(X){g}_{p}({R}_{p}( {\xi}_{\alpha},Z,{\xi}_{\gamma}),{\xi}_{\beta})+{\eta}^{\alpha}(X){\eta}^{\beta}(Y){\eta}^{\gamma}(X){\eta}^{\delta}(Y){g}_{p}({R}_{p}({\xi}_{\alpha},{\xi}_{\beta},{\xi}_{\gamma}),{\xi}_{\delta}).\nonumber
\end{align}
Now, separately we take the terms of previous expression into account, using Remark
\ref{Tensor1,3} and the Bianchi identity, as follows:
{\setlength\arraycolsep{2pt}
\begin{eqnarray*}
{g}_{p}({R}_{p}( Z,W,Z),{\xi}_{\beta})&=&{g}_{p}({R}_{p}( Z,{\xi}_{\beta},Z),W)=-\varepsilon_{Z}\varepsilon_{\beta}{g}_{p}(\bar{\xi},W)=0,\\
{g}_{p}({R}_{p}( Z,W,{\xi}_{\alpha}),W)&=&{g}_{p}({R}_{p}({\xi}_{\alpha},W, Z),W)={g}_{p}({R}_{p}(W,{\xi}_{\alpha},W),Z)=-\varepsilon_{W}\varepsilon_{\alpha}{g}_{p}(\bar{\xi},Z)=0,\\
{g}_{p}({R}_{p}( Z,W,{\xi}_{\alpha}),{\xi}_{\beta})&=&-{g}_{p}({R}_{p}( Z,{\xi}_{\alpha},{\xi}_{\beta}),W)-{g}_{p}({R}_{p}( Z,{\xi}_{\beta}),{\xi}_{\alpha}),W)={g}_{p}({R}_{p}({\xi}_{\alpha},Z,{\xi}_{\beta}),W)\\
&\quad& +\varepsilon_{\beta}{g}_{p}(Z,W){g}_{p}( \bar{\xi}),{\xi}_{\alpha})=-\varepsilon_{\beta}\varepsilon_{\alpha}{g}_{p}(Z,W)+\varepsilon_{\beta}\varepsilon_{\alpha}{g}_{p}(Z,W)=0,\\
{g}_{p}({R}_{p}( Z,{\xi}_{\beta},{\xi}_{\alpha}),W)&=&-{g}_{p}({R}_{p}( Z,{\xi}_{\beta},W){\xi}_{\alpha})=\varepsilon_{\beta}{g}_{p}(Z,W){g}_{p}( \bar{\xi}),{\xi}_{\alpha})=\varepsilon_{\beta}\varepsilon_{\alpha}{g}_{p}(Z,W),\\
{g}_{p}({R}_{p}( Z,{\xi}_{\beta},{\xi}_{\alpha}),{\xi}_{\delta})&=&-{g}_{p}({R}_{p}( {\xi}_{\beta},Z,{\xi}_{\alpha}),{\xi}_{\delta})=\varepsilon_{\beta}\varepsilon_{\alpha}{g}_{p}(Z,{\xi}_{\delta})=0,\\
{g}_{p}({R}_{p}( {\xi}_{\alpha},W,{\xi}_{\gamma}),{\xi}_{\beta})&=&\varepsilon_{\gamma}\varepsilon_{\alpha}{g}_{p}(Z,{\xi}_{\beta})=0.
\end{eqnarray*}}
Therefore, replacing the previous expressions in (\ref{futuro2}), we
have:
\begin{align*}
{g}_{p}({R}_{p}( X,Y,X),Y)
&=a^{2}b^{2}{g}_{p}({R}_{p}( Z,W,Z),W)-a^{2}\varepsilon_{Z}\bar{\eta}(Y)\bar{\eta}(Y)\\
&\quad +2ab\bar{\eta}(Y)\bar{\eta}(X){g}_{p}( Z,W) -b^{2}\varepsilon_{W}\bar{\eta}(X)\bar{\eta}(X).\nonumber
\end{align*}
Hence, being $K_{p}(X,Y)=-\varepsilon_{X}\varepsilon_{Y}{g}_{p}({R}_{p}( X,Y,X),Y)$, we deduce
\begin{align}\label{numero}
K_{p}(  X,Y) &=\varepsilon_{X}\varepsilon_{Y}\{  a^{2}%
b^{2}{g}_{p}(  {R}_{p}(  Z,W,W)  ,Z) -2ab\bar{\eta}(  Y)  \bar{\eta}(  X)  {g}_{p}(
Z,W) \\
&\quad +b^{2}\varepsilon_{W}\bar{\eta}(  X)
^{2}+a^{2}\varepsilon_{Z} \bar{\eta}(  Y)
^{2}\}\nonumber.
\end{align}
Now, we note that
$${g}_{p}(  Z,W) =\frac{1}{ab}{g}_{p}(  X-{\eta
}^{\alpha}(  X)  {\xi}_{\alpha},Y-{\eta}^{\beta}(
Y)  {\xi}_{\beta}) +{\eta}^{\alpha}(
X)  {\eta}^{\beta}(  Y)  {g}_{p}(  {\xi
}_{\alpha},{\xi}_{\beta})  \}  =-\frac{1}{ab}\varepsilon_{\alpha}{\eta
}^{\alpha}(  X)  {\eta}^{\alpha}(  Y)  ,$$
\begin{align*}
	{g}_{p}({R}_{p}(Z,W,W),Z)&=[\epsilon_{Z}\epsilon_{W}-{g}_{p}(Z,W)^{2}]K_{p}(Z,W)\\
	&=\frac{1}{a^{2}b^{2}}[a^{2}\epsilon_{Z}b^{2}\epsilon_{W}-\left(\varepsilon_{\alpha}{\eta}^{\alpha}(X)
	{\eta}^{\alpha}(Y)\right)^{2}]K_{p}(Z,W)\\
	&=\frac{1}{a^{2}b^{2}}[\left(\epsilon_{X}-\varepsilon_{\alpha}{\eta}^{\alpha}(X)^{2}\right)\left(\epsilon_{Y}-\varepsilon_{\alpha}{\eta}^{\alpha}(Y)^{2}\right)\\
	& \quad -\left(\varepsilon_{\alpha}{\eta}^{\alpha}(X)
	{\eta}^{\alpha}(Y)\right)^{2}]K_{p}(Z,W).
\end{align*}
Thus, (\ref{numero}) becomes
\begin{align*}
K_{p}(  X,Y)   & =\varepsilon_{X}\varepsilon_{Y}\{  [
(  \varepsilon_{X}-\varepsilon_{\alpha
}(  {\eta}^{\alpha}(  X)  )  ^{2})(  \varepsilon_{Y}-\varepsilon_{\beta
}(  {\eta}^{\beta}(  Y)  )  ^{2}) \\
& \quad   -(\varepsilon_{\alpha}{\eta
}^{\alpha}(  X)  {\eta}^{\alpha}(  Y)  )
^{2}]  K_{p}(  Z,W) +2\bar{\eta}(  Y)  \bar{\eta}(  X)\varepsilon_{\alpha}{\eta}^{\alpha}(
X)  {\eta}^{\alpha}(  Y)\\
& \quad +(  \varepsilon_{Y}-\varepsilon_{\beta
}(  {\eta}^{\beta}(  Y)  )  ^{2})  
\bar{\eta}(  X) ^{2}+(  \varepsilon_{X}-\varepsilon
_{\alpha}(  {\eta}^{\alpha}(  X)  )  ^{2})
 \bar{\eta}(  Y) ^{2}\},
\end{align*}
and this completes the proof, since $K_{p}(Z,W)$ is given as in Proposition \ref{ultimo}.%
\end{proof}

We recall the following result.
\begin{lemma}[\cite{[O'N]}]
\label{lemma R=S} Let $(  V,g)
$ be a semi-Euclidean vector space and $R$ a $(0,4)$-type tensor on
$V$ such that for any $X,Y,Z,W\in V$ the following conditions hold:
\begin{itemize}
\item[a)] $R(  X,Y,Z,W)  =-R(  Y,X,Z,W)  ,$
\item[b)] $R(  X,Y,Z,W)  =-R(  X,Y,W,Z)  ,$
\item[c)] $R(  X,Y,Z,W)  =R(  Z,W,X,Y)  ,$
\item[d)] $\mathfrak{S}_{Y,Z,W}R(  X,Y,Z,W)  =0.$
\end{itemize}
\noindent If $R(  X,Y,X,Y)  =0$ for any linearly independent and non lightlike vectors $X,Y\in V$, then $R=0$.
Moreover, if $R$ and $S$ are $(0,4)$-type tensors on $V$ such
that the conditions (a-d) are satisfied and $R(
X,Y,X,Y)  =S(  X,Y,X,Y)  $ for any $X,Y\in V$ linearly
independent non lightlike vectors, then $R=S$.
\end{lemma}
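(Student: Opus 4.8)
The plan is to reduce the ``moreover'' clause to the first assertion and then prove the latter by the classical polarization identity for curvature-type tensors, adapted to the indefinite setting. For the reduction, observe that conditions (a)--(d) are all linear in the tensor, so $T:=R-S$ again satisfies (a)--(d); by hypothesis $T(X,Y,X,Y)=0$ for every pair of linearly independent non lightlike vectors, and the first assertion applied to $T$ gives $T=0$, i.e. $R=S$. It therefore suffices to prove that an $R$ obeying (a)--(d) with $R(X,Y,X,Y)=0$ for all linearly independent non lightlike $X,Y$ must vanish identically.

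The genuinely semi-Riemannian difficulty, absent in the Riemannian case, is that the vanishing is assumed only on the set of admissible pairs, whereas polarization forces substitutions such as $X+Z$, which need be neither non lightlike nor independent of the other arguments. I would remove this obstruction first by a density argument. The map $(X,Y)\mapsto R(X,Y,X,Y)$ is a polynomial, hence continuous, function on $V\times V$. The pairs that fail to be admissible are contained in the union of the loci $\{g(X,X)=0\}$, $\{g(Y,Y)=0\}$ and $\{X,Y\ \text{linearly dependent}\}$; since $g$ is nondegenerate and (in the only nontrivial case) $\dim V\geq 2$, each locus is the zero set of a nontrivial polynomial and is therefore nowhere dense, so their union is nowhere dense and the admissible pairs form a dense subset of $V\times V$. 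Thus $R(X,Y,X,Y)$ vanishes on a dense set, and by continuity $R(X,Y,X,Y)=0$ for \emph{all} $X,Y\in V$.

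With this strengthened vanishing in hand the argument is purely algebraic and uses only (a)--(d). First I would polarize in the first and third slots: expanding $R(X+Z,Y,X+Z,Y)=0$, the two diagonal terms drop out and symmetry (c) gives $2R(X,Y,Z,Y)=0$, whence $R(X,Y,Z,Y)=0$ for all $X,Y,Z$. Polarizing this identity in the second and fourth slots, i.e. expanding $R(X,Y+W,Z,Y+W)=0$, yields $R(X,Y,Z,W)+R(X,W,Z,Y)=0$; invoking the skew-symmetry (b) in the last pair this becomes $R(X,Y,Z,W)=R(X,W,Y,Z)$, so $R$ is invariant under the cyclic permutation of its last three arguments, and iterating shows all three cyclic permutations coincide. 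The first Bianchi identity (d) asserts that their sum is zero, so $3R(X,Y,Z,W)=0$ and $R=0$.

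The only delicate step is the density/continuity passage of the second paragraph, which is exactly where the restriction to non lightlike independent vectors must be dealt with; once the sectional-type vanishing is extended to all vectors, the polarization is entirely routine. (For $\dim V\leq 1$ the conclusion is immediate, since there are no independent pairs and (a) already forces $R=0$.)
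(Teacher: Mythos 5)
Your proof is correct, but note that the paper itself offers no proof of this statement: the lemma is recalled from O'Neill's book \cite{[O'N]} and used as a black box, so there is no internal argument to compare against. Measured against the standard proof (the one in \cite{[O'N]}), your route is essentially the classical one, with one cosmetic difference in the key step. The reduction of the ``moreover'' clause via $T:=R-S$ is the expected one, and the two polarizations plus the Bianchi identity giving $3R(X,Y,Z,W)=0$ are exactly the textbook computation. The genuinely indefinite difficulty --- that the quadratic form $(X,Y)\mapsto R(X,Y,X,Y)$ is only assumed to vanish on admissible pairs --- is handled in \cite{[O'N]} by a one-variable polynomial argument (for fixed vectors, admissibility of a perturbed pair fails only at finitely many values of the perturbation parameter, and a real polynomial vanishing off a finite set vanishes identically), whereas you use topological density of the complement of a finite union of proper algebraic subsets; these are the same idea, and your version is stated and justified correctly, including the observation that the linear-dependence locus and the two null cones are each cut out by nontrivial polynomials when $\dim V\geq 2$. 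One detail you implicitly get right and is worth making explicit: the hypothesis here is vanishing on pairs of \emph{linearly independent non-lightlike} vectors, which is neither stronger nor weaker pointwise than the more common hypothesis of vanishing on pairs spanning a \emph{nondegenerate plane} (two non-lightlike independent vectors can span a degenerate plane, and a nondegenerate plane can be spanned by two lightlike vectors); since your density argument works directly with the hypothesis set as stated in the lemma, no translation between the two formulations is needed. The dimension $\leq 1$ remark is also correct, since condition a) alone kills $R$ there.
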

 \begin{proposition}
\label{Lemma 04}Let $(  {M},{\varphi},{\xi}_{\alpha
},{\eta}^{\alpha},{g})  $ be an indefinite $\mathcal{S}%
$-manifold, $T$ and $S$ be $(0,4)$-type tensor fields on ${M}$ such that the
following conditions hold:
\begin{itemize}
\item [i)] $ T(X,Y,Z,W)=-T(Y,X,Z,W), \;\; S(
X,Y,Z,W)  =-S(  Y,X,Z,W)$, \;\;  $X,Y,Z,W\in\Gamma(T{M})$ 
\item[ii)] $T(  X,Y,Z,W)  =-T(  X,Y,W,Z), \;\; S(
X,Y,Z,W)  =-S(  X,Y,W,Z)$, \;\; $X,Y,Z,W\in\Gamma(  T{M})  $
\item [iii)] $T(  X,Y,Z,W)  =T(  Z,W,X,Y), \;\; S(
X,Y,Z,W)  =S(  Z,W,X,Y)$, \;\; $X,Y,Z,W\in\Gamma(  T{M})  $
\item[iv)] $\mathfrak{S}_{Y,Z,W}T(  X,Y,Z,W)  =0, \quad  \mathfrak{S}%
_{Y,Z,W}S(  X,Y,Z,W)  =0$, \quad $X,Y,Z,W\in\Gamma(  T{M})  $
\item[v)] for any $X,Y,Z,W\in\Gamma(  \mathfrak{D})  $%
\begin{align*}
T(  X,Y,{\varphi}Z,W)  +T(  X,Y,Z,{\varphi}W)
& =\varepsilon P(  X,Y;Z,W) \\
S(  X,Y,{\varphi}Z,W)  +S(  X,Y,Z,{\varphi}W)
& =\varepsilon P(  X,Y;Z,W)
\end{align*}
\item [vi)]for any $X,Y\in\Gamma(  \mathfrak{D})  $ and for any
$\alpha,\beta,\gamma,\delta\in\{  1,\ldots,r\}  $
\begin{itemize}
\item [(a)] $T(  X,{\xi}_{\alpha},X,Y)  =S(  X,{\xi}_{\alpha},X,Y) $,
\item[(b)] $T(  {\xi}_{\alpha},X,{\xi}_{\beta},Y)  =S(
{\xi}_{\alpha},X,{\xi}_{\beta},Y)  $,
\item[(c)] $T(  {\xi}_{\alpha},X,{\xi}_{\beta},{\xi}_{\gamma
})  =S(  {\xi}_{\alpha},X,{\xi}_{\beta},{\xi}_{\gamma
}) $ ,
\item[(d)] $T(  {\xi}_{\alpha},{\xi}_{\beta},{\xi}_{\gamma
},{\xi}_{\delta})  =S(  {\xi}_{\alpha},{\xi}_{\beta},{\xi}_{\gamma
},{\xi}_{\delta}) $ .
\end{itemize}
\end{itemize}

Then, if $T(  X,{\varphi}X,X,{\varphi}X)  =S(
X,{\varphi}X,X,{\varphi}X)  $ for any $X\in\Gamma(
\mathfrak{D})  $ non lightlike vector field, one has $T=S$.
\end{proposition}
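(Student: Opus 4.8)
The plan is to set $R:=T-S$ and prove that $R$ vanishes identically by checking the hypotheses of Lemma \ref{lemma R=S}. Since conditions (i)--(iv) are imposed on both $T$ and $S$, their difference $R$ at once satisfies the four symmetry properties (a)--(d) of that lemma. It then suffices to show that $R(X,Y,X,Y)=0$ for every pair of linearly independent non lightlike vectors $X,Y\in T_pM$; in fact I would obtain this for \emph{all} $X,Y$. The whole argument runs parallel to the proof of Theorem \ref{richiamo}, the difference being that the vanishing of the mixed components of $R$ now comes from hypothesis (vi) instead of from Remark \ref{Tensor1,3}.

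First I would treat the case $X,Y\in\mathfrak{D}$. Both $T$ and $S$ obey (v), so $R=T-S$ satisfies the \emph{homogeneous} relation $R(X,Y,\varphi Z,W)+R(X,Y,Z,\varphi W)=0$ for $X,Y,Z,W\in\mathfrak{D}$. This is precisely the compatibility (the case $Q=0$ of Proposition \ref{compatibil con fi}) that, through the algebraic manipulation leading to (\ref{espr B}), expresses $B_R(X,Y):=R(X,Y,X,Y)$ for $X,Y\in\Gamma(\mathfrak{D})$ as the same combination of the values $D_R(V):=R(V,\varphi V,V,\varphi V)$ that appears in (\ref{espr B}), but now \emph{without} the $24\varepsilon P$ term, which is identical for $T$ and $S$ and hence cancels. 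By hypothesis $D_R$ vanishes on every non lightlike $V\in\mathfrak{D}$; since $D_R$ is a homogeneous degree-four polynomial on $\mathfrak{D}_p$ whose zero set contains the dense set of non lightlike vectors, $D_R\equiv 0$ on $\mathfrak{D}$. Therefore $B_R\equiv 0$, that is $R(X,Y,X,Y)=0$ for all $X,Y\in\mathfrak{D}$.

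Next I would pass to arbitrary $X,Y\in T_pM$ by splitting them along $TM=\mathfrak{D}\oplus\ker\varphi$, writing $X=-\varphi^2X+\eta^\alpha(X)\xi_\alpha$ and similarly for $Y$, and expanding $R(X,Y,X,Y)$ by multilinearity exactly as in (\ref{futuro2}). The term with all four arguments in $\mathfrak{D}$ vanishes by the previous step. Every remaining term carries at least one factor $\xi_\alpha$, and I would show each such component of $R$ is zero. A component with two adjacent characteristic fields, such as $R(\xi_\alpha,\xi_\beta,Z,W)$, reduces by the symmetries (a)--(c) and the first Bianchi identity (d) to components of the type $R(\xi_\alpha,Z,\xi_\beta,W)$, which vanish by (vi)(b). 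A component with three or four characteristic fields reduces by the pair symmetry (c) to the form (vi)(c) or (vi)(d). Finally, a single-$\xi$ component such as $R(\xi_\alpha,V,U,V)$ equals $-R(V,\xi_\alpha,U,V)$ by (a); polarizing the quadratic identity (vi)(a), namely $R(W,\xi_\alpha,W,Y)=0$, in its repeated argument and then setting $Y=V$ gives $R(V,\xi_\alpha,U,V)=0$, since the diagonal piece $R(U,\xi_\alpha,V,V)$ vanishes by antisymmetry. Hence $R(X,Y,X,Y)=0$ for all $X,Y\in T_pM$.

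Thus $R$ satisfies (a)--(d) and $R(X,Y,X,Y)=0$ on every linearly independent non lightlike pair, so Lemma \ref{lemma R=S} yields $R=0$, i.e.\ $T=S$. The only delicate point is the term-by-term bookkeeping of the third paragraph: one must verify that the Bianchi identity disposes of the adjacent-$\xi$ components and that the off-diagonal single-$\xi$ components are reached by polarizing (vi)(a). All of this is routine and mirrors the reductions already carried out in the proof of Theorem \ref{richiamo}.
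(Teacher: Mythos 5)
Your proposal is correct and follows essentially the same route as the paper: both arguments reduce to the case of arguments in $\mathfrak{D}$ via the K\"ahler-type identity coming from (v), handle the components involving the $\xi_{\alpha}$'s through the (\ref{futuro2})-style multilinear expansion together with hypothesis (vi), and conclude with Lemma \ref{lemma R=S}. The only cosmetic differences are that you work with the difference tensor $T-S$ and the first part of that lemma (the paper treats $T$ and $S$ separately and invokes the second part), and that where the paper cites the Barros--Romero result \cite{[BR]} for curvature-like tensors on a space with an almost complex structure, you re-derive that step through the homogeneous analogue of (\ref{espr B}) plus a density argument for non lightlike vectors.
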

\begin{proof}
It is to verify that \emph{v)}
implies that for any $X^{\prime},Y^{\prime},Z^{\prime},W^{\prime}$ in
$\Gamma(  \mathfrak{D})  $ 
\[
T(  {\varphi}X^{\prime},{\varphi}Y^{\prime},{\varphi
}Z^{\prime},{\varphi}W^{\prime})  =T(  X^{\prime},Y^{\prime
},Z^{\prime},W^{\prime}),
\]
and, using the above formula, we obtain
$T(  {\varphi}X^{\prime},{\varphi}Y^{\prime},Z^{\prime},W^{\prime
})  =T(  X^{\prime},Y^{\prime},{\varphi}Z^{\prime},{\varphi}%
W^{\prime})$.
Analogously, for the tensor field $S$ we have
$S(  {\varphi}X^{\prime},{\varphi}Y^{\prime},Z^{\prime},W^{\prime
})  =S(  X^{\prime},Y^{\prime},{\varphi}Z^{\prime},{\varphi}%
W^{\prime})$.

Now, being ${\varphi}_{p}$ an almost complex structure on $\mathfrak{D}%
_{p}$ for any $p\in{M}$,  from a well-known result analogous to
the Lemma \ref{lemma R=S} (\cite{[BR]}), in the case of a real vector
space endowed with an almost complex structure, we deduce
$T(  X^{\prime},Y^{\prime},Z^{\prime},W^{\prime})  =S(
X^{\prime},Y^{\prime},Z^{\prime},W^{\prime})$.
Then, in particular, we have
$T(  X^{\prime},Y^{\prime},X^{\prime},Y^{\prime})  =S(
X^{\prime},Y^{\prime},X^{\prime},Y^{\prime})$.

Now, if $X,Y\in\Gamma(  T{M})$ are linearly independent and non lightlike, 
we compute $T(  X,Y,X,Y)  $ and $S(  X,Y,X,Y)  $,
writing $X=X^{\prime}+{\eta}^{\alpha}(  X)  {\xi}_{\alpha}$
and $Y=Y^{\prime}+{\eta}^{\alpha}(  Y)  {\xi}_{\alpha}$, and likewise to (\ref{futuro2}), by the $\mathfrak{F}(M)$-linearity of $T$ and $S$, using \emph{vi)}, we get $T(  X,Y,X,Y)  =S(  X,Y,X,Y)  $.%
 \end{proof}
\begin{remark}
\emph{Using Remark \ref{Tensor1,3} and Proposition \ref{ker}, the Riemannian (0,4)-type curvature tensor field $
{R}$ satisfies the properties listen in Proposition \ref{Lemma 04}. Thus, 
 it is uniquely determined by the $\varphi$-sectional curvature.}
\end{remark}
\begin{theorem}
Let $(  {M},{\varphi},{\xi}_{\alpha},{\eta
}^{\alpha},{g})  $ be an indefinite $\mathcal{S}$-manifold. Then the
${\varphi}$-sectional curvature $c$ is pointwise constant, $c\in
\mathfrak{F}(  {M})  $, if and only if the Riemannian
$(0,4)$-type curvature tensor field ${R}$ is given by
\begin{align}
	{R}(  X,Y,Z,W)   &  =-\frac{c+3\varepsilon}{4}\{  
{g}({\varphi}Y,{\varphi}Z)  {g}({\varphi} X,{\varphi}W) -{g}(  {\varphi}X,{\varphi}Z)
{g}( {\varphi}Y,{\varphi}W)\} \label{equivalente}\\
&\quad-\frac{c-\varepsilon}{4}\{  \Phi(  W,X)
\Phi(  Z,Y)     -\Phi(  Z,X)  \Phi(  W,Y) +2\Phi(  X,Y)  \Phi(  W,Z)
\} \nonumber\\
&  \quad -\{ \bar {\eta}(  W)  \bar {\eta
}(  X)  {g}(  {\varphi}Z,{\varphi
}Y)  -\bar {\eta}(  W)  \bar {\eta}(
Y)  {g}(  {\varphi}Z,{\varphi}X)  +\bar {\eta
}(  Y)  \bar {\eta}(  Z)  {g}(
{\varphi}W,{\varphi}X)\nonumber\\
&  \quad    -\bar {\eta}(  Z)  \bar {\eta}(  X)  {g}(  {\varphi}W,{\varphi}Y)
\} .\nonumber
\end{align}
\end{theorem}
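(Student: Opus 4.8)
The plan is to read the right-hand side of (\ref{equivalente}) as a $(0,4)$-tensor field $S$ on ${M}$ and to deduce $R=S$ from the uniqueness Proposition \ref{Lemma 04}, taking $T=R$. By the Remark following Proposition \ref{Lemma 04}, the curvature tensor $R$ already satisfies the symmetries (i)--(iv); condition (v) for $R$ is exactly Proposition \ref{compatibil con fi} restricted to $X,Y,Z,W\in\Gamma(\mathfrak{D})$, where $Q$ vanishes; and the identities (vi) follow from the flatness of $\ker{\varphi}$ (Proposition \ref{ker}) together with the values of $R$ on the characteristic vector fields recorded in Remark \ref{Tensor1,3}. Hence it suffices to check that $S$ satisfies (i)--(v), that $R$ and $S$ agree on all the arguments appearing in (vi), and that they agree on the holomorphic quadruples $(X,{\varphi}X,X,{\varphi}X)$; then Proposition \ref{Lemma 04} forces $R=S$.

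First I would verify (i)--(iv) for $S$. The three antisymmetries follow from the skew-symmetry of $\Phi$ and the symmetry of $(X,Y)\mapsto{g}({\varphi}X,{\varphi}Y)$ and of $\bar{\eta}\otimes\bar{\eta}$, while the first Bianchi identity (iv) reduces to the cyclic identity $\mathfrak{S}_{Y,Z,W}\{\Phi(W,X)\Phi(Z,Y)-\Phi(Z,X)\Phi(W,Y)+2\Phi(X,Y)\Phi(W,Z)\}=0$ for the holomorphic block, the constant-curvature block and the $\bar{\eta}$-block having vanishing cyclic sum by direct cancellation. For (v) I restrict to $\mathfrak{D}$, where ${g}({\varphi}\cdot,{\varphi}\cdot)={g}(\cdot,\cdot)$ and $\bar{\eta}=0$, and compute $S(X,Y,{\varphi}Z,W)+S(X,Y,Z,{\varphi}W)$: the two $2\Phi(X,Y)\Phi(W,Z)$ contributions cancel, both blocks collapse to the same expression up to sign, and the coefficient $-\frac{c+3\varepsilon}{4}+\frac{c-\varepsilon}{4}=-\varepsilon$ turns the result into $\varepsilon P(X,Y;Z,W)$, as required. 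Condition (vi) is a term-by-term comparison: inserting characteristic vector fields into $S$ annihilates every $\Phi$- and ${g}({\varphi}\cdot,{\varphi}\cdot)$-factor that carries a ${\xi}_{\alpha}$, and the surviving $\bar{\eta}$-terms are matched against $R$ using ${R}({\xi}_{\alpha},X,{\xi}_{\beta})=-\varepsilon_{\alpha}\varepsilon_{\beta}X$ and ${R}(X,{\xi}_{\alpha},X)=-\varepsilon_{X}\varepsilon_{\alpha}\bar{\xi}$ from Remark \ref{Tensor1,3}.

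The decisive step is the evaluation on a ${\varphi}$-plane. For a non lightlike $X\in\mathfrak{D}$ one has ${\varphi}^{2}X=-X$, ${g}(X,{\varphi}X)=0$, ${g}({\varphi}X,{\varphi}X)=\varepsilon_{X}$ and $\bar{\eta}(X)=\bar{\eta}({\varphi}X)=0$, so substituting into (\ref{equivalente}) gives $S(X,{\varphi}X,X,{\varphi}X)=\frac{c+3\varepsilon}{4}\varepsilon_{X}^{2}+\frac{3(c-\varepsilon)}{4}\varepsilon_{X}^{2}=c\,\varepsilon_{X}^{2}$, while for $\pi=\mathrm{span}\{X,{\varphi}X\}$ one has ${R}(X,{\varphi}X,X,{\varphi}X)=\Delta(\pi)H_{p}(X)=\varepsilon_{X}^{2}H_{p}(X)$. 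Thus, if the ${\varphi}$-sectional curvature equals the constant $c$, then $R$ and $S$ coincide on every holomorphic quadruple and Proposition \ref{Lemma 04} yields $R=S$, which is (\ref{equivalente}). Conversely, if $R$ is given by (\ref{equivalente}), the same substitution gives $H_{p}(X)={R}(X,{\varphi}X,X,{\varphi}X)/\varepsilon_{X}^{2}=c$ for every non lightlike $X\in\mathfrak{D}$, so the ${\varphi}$-sectional curvature is pointwise constant.

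The main obstacle I anticipate is the bookkeeping in condition (vi): one must keep careful track of which $\bar{\eta}$- and $\Phi$-terms survive as the arguments range over the characteristic vector fields, and reconcile the sign conventions linking the $(1,3)$- and $(0,4)$-forms of $R$ with those of Remark \ref{Tensor1,3}. The verification of (i)--(v) for $S$, though lengthy, is routine algebra.
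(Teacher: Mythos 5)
Your proposal is correct and follows essentially the same route as the paper: the authors also take $S$ to be the right-hand side of (\ref{equivalente}), verify the symmetries, the Bianchi identity, condition (v) and the comparisons (vi) required by Proposition \ref{Lemma 04} (using Remark \ref{Tensor1,3} and Proposition \ref{ker} for $R$), and then conclude $R=S$ from the coincidence $S(X,\varphi X,X,\varphi X)=c\,g(X,X)^2=R(X,\varphi X,X,\varphi X)$ on holomorphic quadruples, with the converse obtained by direct substitution. Your coefficient bookkeeping ($-\tfrac{c+3\varepsilon}{4}+\tfrac{c-\varepsilon}{4}=-\varepsilon$ giving $\varepsilon P$, and $\tfrac{c+3\varepsilon}{4}+\tfrac{3(c-\varepsilon)}{4}=c$) matches the paper's computations exactly.
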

\begin{proof}
We suppose that the ${\varphi}$-sectional curvature $c$ is pointwise
constant and in order to prove (\ref{equivalente}), denote by $S(  X,Y,Z,W) $ the right-hand side of (\ref{equivalente}). 
Obviously $S$ is a tensor field of type (0,4) on ${M}$, and we shall
prove that $S$ coincides with ${R}$. To this end it is easy to check that for
any $X,Y,Z,W\in\Gamma(  T{M})  $ we have the properties of
skew-symmetry $-S(  X,Y,W,Z)  =S(  X,Y,Z,W)
=-S(  Y,X,Z,W)$
and the Bianchi identity $
\mathfrak{S}%
_{Y,Z,W}S(  X,Y,Z,W)  =0,$
while the property \emph{iii)} of Proposition
\ref{Lemma 04}, $S(  X,Y,Z,W)  =S(  Z,W,X,Y)  $, follows
by the Bianchi identity and the skew-symmetries.

Now, for $X,Y,Z,W\in\Gamma(  \mathfrak{D})  $, computing $S(  X,Y,Z,{\varphi}W) +S(  X,Y,{\varphi}Z,W)$ we get
\begin{align*}
&S(X,Y,Z,{\varphi}W) +S(  X,Y,{\varphi}Z,W)=
-\frac{c}{4}\{ {g}(Y,Z)\Phi(X,W)-{g}(X,Z)\Phi(Y,W)+\Phi(Y,Z){g}(X,W)\\
&\quad -\Phi(X,Z){g}(Y,W) +{g}(W,X)\Phi(  Z,Y)-\Phi(  Z,X)  {g}(W,Y)+\Phi(W,X){g}(Z,Y)-{g}(Z,X)\Phi(  W,Y)\} \\
&\quad -\frac{\varepsilon}{4}\{ 3\Phi( X,W){g}( Z,Y)-3\Phi(  Y,W)  {g}(X,Z) +3{g}(  X,W)\Phi(Y,Z)-3{g}(Y,W)  \Phi(X,Z)\\
& \quad +\Phi( Y,Z){g}(  W,X)-\Phi(  X,Z)  {g}(  W,Y)+\Phi( X,W){g}(  Z,Y)-\Phi(  Y,W)  {g}(Z,X)  \}\\
&=-\varepsilon\{ \Phi( X,W){g}( Z,Y) -\Phi(X,Z){g}(Y,
W)-\Phi(  Y,W)  {g}(X,Z)+{g}(  X,W)  \Phi(Y,Z) \}\\
& =\varepsilon P(X,Y;Z,W).
\end{align*}
We continue verifying \emph{vi)} of Proposition \ref{Lemma 04}, and obtaining $S(  X,{\xi}_{\alpha},X,Y)=0={R}(  X,{\xi}_{\alpha},X,Y)$, $S(  {\xi}_{\alpha},X,{\xi}_{\beta},{\xi}_{\gamma})=0={R}(  {\xi}_{\delta},X,{\xi}_{\beta},{\xi}_{\gamma
})$, $S(  {\xi}_{\alpha},{\xi}_{\delta},{\xi}_{\beta},{\xi}_{\gamma})=0={R}(  {\xi}_{\delta},{\xi}_{\delta},{\xi}_{\beta},{\xi}_{\gamma
})$ and
\begin{align*}
S(  {\xi}_{\alpha},X,{\xi}_{\beta},Y)   &  =-\frac{c+3\varepsilon}{4}\{  
{g}({\varphi}X,{\varphi}{\xi}_{\beta})  {g}({\varphi}  {\xi}_{\alpha},{\varphi}Y)-{g}(  {\varphi} {\xi}_{\alpha},{\varphi}{\xi}_{\beta})
{g}( {\varphi}X,{\varphi}Y)\}\\
&\quad  -\frac{c-\varepsilon}{4}\{  \Phi(  Y, {\xi}_{\alpha})
\Phi(  {\xi}_{\beta},X)-\Phi(  {\xi}_{\beta}, {\xi}_{\alpha})  \Phi(  Y,X) +2\Phi(   {\xi}_{\alpha},X)  \Phi(  Y,{\xi}_{\beta})
\} \\
&  \quad-\{  \bar{\eta}(  Y)  \bar{\eta
}(   {\xi}_{\alpha})  {g}(  {\varphi}{\xi}_{\beta},{\varphi
}X) -{\bar{\eta}}(  Y)  \bar{\eta}(
X)  {g}(  {\varphi}{\xi}_{\beta},{\varphi} {\xi}_{\alpha})+{\bar{\eta}
}(  X)  \bar{\eta}(  {\xi}_{\beta})  {g}(
{\varphi}Y,{\varphi} {\xi}_{\alpha}) \\
&  \quad -\bar{\eta}(  {\xi}_{\beta})  \bar{\eta}(   {\xi}_{\alpha})  {g}(  {\varphi}Y,{\varphi}X)
\}=\varepsilon_{\alpha}\varepsilon_{\beta}{g}(  X,Y)  =
{R}(  {\xi}_{\alpha},X,{\xi}_{\beta},Y) .
\end{align*}
For any $X\in\Gamma(  \mathfrak{D})$ non lightlike vector field, we compute $S(
X,{\varphi}X,X,{\varphi}X)  $, obtaining:
\begin{align}
S(  X,{\varphi}X,X,{\varphi}X)   &  =-\frac{c+3\varepsilon}{4}\{  
{g}({\varphi}^{2}X,{\varphi}X)  {g}({\varphi} X,{\varphi}^{2}X) -{g}(  {\varphi}X,{\varphi}X)
{g}( {\varphi}^{2}X,{\varphi}^{2}X)\}\label{corr01}\\
&\quad  -\frac{c-\varepsilon}{4}\{  \Phi(  {\varphi}X,X)
\Phi(  X,{\varphi}X)   -\Phi(  X,X)  \Phi({\varphi}X,{\varphi}X) +2\Phi(  X,{\varphi}X)  \Phi(  {\varphi}X,X)
\} \nonumber\\
&  \quad -\{  \bar{\eta}(  {\varphi}X)  \bar{\eta
}(  X)  {g}(  {\varphi}X,{\varphi
}^{2}X)  -\bar{\eta}(  {\varphi}X)  \bar{\eta}(
{\varphi}X)  {g}(  {\varphi}X,{\varphi}X)  \nonumber\\
&  \quad  +\bar{\eta
}(  {\varphi}X)  \bar{\eta}(  X)  {g}(
{\varphi}^{2}X,{\varphi}X)  -\bar{\eta}(  X)  \bar{\eta}(  X)  {g}(  {\varphi}^{2}X,{\varphi}^{2}X)
\}\nonumber\\
&=\frac{c+3\varepsilon}{4}{g}(  X,X)^{2}-\frac{c-\varepsilon}{4}\{-{g}(  X,X)^{2}-2{g}(  X,X)  ^{2}\}\nonumber\\
&=\frac{c+3\varepsilon}{4}{g}(  X,X)^{2}+3\frac{c-\varepsilon}{4}{g}(  X,X)^{2}=c{g}(  X,X)  ^{2}.\nonumber
\end{align}
Moreover, since by definition of ${\varphi}$-sectional
curvature we have
 \begin{equation}
{R}(  X,{\varphi}X,X,{\varphi}X)  =c{g}(
X,X)  ^{2}.\label{C02}%
\end{equation}
from (\ref{corr01}) and (\ref{C02}) we get
${R}(  X,{\varphi}X,X,{\varphi}X)  =S(
X,{\varphi}X,X,{\varphi}X)$,
and, using Proposition \ref{Lemma 04}, the previous Remark and the properties
of the tensor field $S$, we obtain ${R}(  X,Y,Z,W)  =S(  X,Y,Z,W)$,
for any $X,Y,Z,W\in\Gamma(  T{M})  $, that is the formula
(\ref{equivalente}).

Conversely, if we assume (\ref{equivalente}), choosing a
point $p\in{M}$ and a ${\varphi}$-plane $\pi=span\{
X,{\varphi}X\}  $, with $X\in\mathfrak{D}_{p}$ non lightlike vector, by direct computation, omitting the point $p$, we have
\[
H(  X) =\frac{c+3\varepsilon}{4{g}(  X,X)  ^{2}}{g}(  X,X)^{2}+3\frac{c-\varepsilon}{4{g}(  X,X)  ^{2}}{g}(  X,X)^{2}=c.
\]
\end{proof}
\section{Sectional Curvature in the case $\varepsilon=0$, an example}
In this section we consider the case $\varepsilon=0$, as already pointed out, $r=2p$ and ${\xi}_{1},\ldots,{\xi}_{p}$ are timelike vector field, ${\xi}_{p+1},\ldots,{\xi}_{2p}$ are spacelike vector field. We call such a manifold a \emph{special indefinite} $\mathcal{S}$-\emph{manifold}. Let $(  {M},{\varphi},{\xi
}_{\alpha},{\eta}^{\alpha},{g})  $ be a special indefinite
$\mathcal{S}$-manifold. The tensor $Q$ is given by
\begin{align*}
Q(  X,Y;Z,W)   & =-{g}(  W,{\varphi}Y) 
\bar{\eta}(  Z)
\bar{\eta}(  X)+{g}(  W,{\varphi}X)\bar{\eta}(  Z) \bar {\eta}(
Y) +{g}(  Z,{\varphi}Y)\bar{\eta}(  X)  \bar{\eta}(
W)\\
& \quad -{g}(  Z,{\varphi}X)\bar{\eta}(  Y)  \bar{\eta}(
W),
\end{align*}
and 
\begin{align*}
{g}(  {R}(  X,Y,{\varphi}Z)  ,W)
+{g}(  {R}(  X,Y,Z)  ,{\varphi}W)=-Q(  X,Y;Z,W)
\end{align*}
Moreover, being $Q(  X,Y;Z,W)  =0$ for any
$X,Y,Z,W\in\mathfrak{D}$,  we have
\begin{itemize}
\item [a)] ${g}({R}({\varphi}X,{\varphi}Y,{\varphi
   }Z)  ,{\varphi}W)  ={g}(  {R}(
   X,Y,Z)  ,W) $ ;
\item [b)]
${g}(  {R}(  X,{\varphi}X,Y)  ,{\varphi
}Y)   ={g}(  {R}(  X,Y,X)  ,Y)  +
{g}(  {R}(  X,{\varphi}Y,X)  ,{\varphi}Y)$  ;
\item [c)]
${g}(  {R}(  {\varphi}X,Y,{\varphi}X)
,Y)  ={g}(  {R}(  X,{\varphi}Y,X)
,{\varphi}Y)$  .
\end{itemize}
Furthermore, for $X,Y\in\Gamma(   \mathfrak{D})  $
\begin{align*}
B(  X,Y)  =\frac{1}{32}\{  3D(  X+{\varphi
}Y)  +3D(  X-{\varphi}Y)  -D(  X+Y)-D(  X-Y)  
-4D(  X)  -4D(  Y)  \},
\end{align*}
and for a non degenerate 2-plane $\pi=span\{  X,Y\}$ of
$\mathfrak{D}_{p}$, where $X$ and $Y$ are unit vectors of $\mathfrak{D}_{p}$,
\begin{align*}
K_{p}(  X,Y)   & =\frac{1}{32(  \varepsilon_{X}\varepsilon
_{Y}-{g}(  X,Y)  ^{2})  }\{  3(  \varepsilon
_{X}+\varepsilon_{Y}+2{g}(  X,{\varphi}Y)  )
^{2}H_{p}(  X+{\varphi}Y)  \\
& \quad +3(  \varepsilon_{X}+\varepsilon_{Y}-2{g}(  X,{\varphi
}Y)  )  ^{2}H_{p}(  X-{\varphi}Y) -(  \varepsilon_{X}+\varepsilon_{Y}+2{g}(  X,Y)
)  ^{2}H_{p}(  X+Y) \\
& \quad -(  \varepsilon_{X}+\varepsilon_{Y}-2{g}(  X,Y)
)  ^{2}H_{p}(  X-Y)  -4H_{p}(  X)  -4H_{p}(
Y)\} .
\end{align*}
Finally we have that the
${\varphi}$-sectional curvature $c$ is pointwise constant, $c\in
\mathfrak{F}(  {M})  $, if and only if the Riemannian
(0,4)-type curvature tensor field ${R}$ is given by
\begin{align}
	{R}(  X,Y,Z,W)   &  =-\frac{c}{4}\{  
{g}({\varphi}Y,{\varphi}Z)  {g}({\varphi} X,{\varphi}W)-{g}(  {\varphi}X,{\varphi}Z)
{g}( {\varphi}Y,{\varphi}W) \label{equivalente2}\\
&\quad  + \Phi(  W,X)
\Phi(  Z,Y)   -\Phi(  Z,X)  \Phi(  W,Y) +2\Phi(  X,Y)  \Phi(  W,Z)
\} \nonumber\\
&  \quad -\{  \bar{\eta}(  W)  \bar{\eta
}(  X)  {g}(  {\varphi}Z,{\varphi
}Y)  -\bar{\eta}(  W)  \bar{\eta}(
Y)  {g}(  {\varphi}Z,{\varphi}X)  \nonumber\\
&  \quad  +\bar{\eta
}(  Y)  \bar{\eta}(  Z)  {g}(
{\varphi}W,{\varphi}X)  -\bar{\eta}(  Z)  \bar{\eta}(  X)  {g}(  {\varphi}W,{\varphi}Y)
\} \nonumber.
\end{align}

 An example of a special indefinite $\mathcal{S}$-manifold is $M=(\mathbb{R}^{4}_{1},\varphi,\xi_{1},\xi_{2},\eta^{1},\eta^{2},g)$, which is described in Example \ref{terzo}. 
We observe that the metric is Lorentzian, $\xi_{1}$ is a spacelike vector field while $\xi_{2}$ is a timelike vector field, then, since $\varepsilon=0$, the structure is a special indefinite $\mathcal{S}$-structure.
Now, we compute the tensor field $Q$ on some relevant set of vector fields, the sectional curvature and $\varphi$-sectional curvature. We know that $Q=0$ on $\mathfrak{D}$, moreover we have
{\setlength\arraycolsep{2pt}
\begin{eqnarray}
	Q(\xi_{1},Y;Z,W)&=&	-Q(\xi_{2},Y;Z,W)= -{g}(W,\varphi Y)\bar{\eta}(Z)+{g}(Z,\varphi Y)\bar{\eta}(W)=0,\nonumber\\
   Q(\xi_{\alpha},Y;\xi_{\beta},W)&=&Q(Y,\xi_{\alpha};W,\xi_{\beta})=-\varepsilon_{\alpha}\varepsilon_{\beta}g(W,\varphi Y),\label{Q01}
\end{eqnarray}}for any $Y,Z,W\in\Gamma(\mathfrak{D})$ and for any $\alpha,\beta\in\{1,2\}$.
Equation (\ref{Q01}) shows that $Q$ never vanishes.
Now,  computing the Christoffel's symbols we obtain:
$$\Gamma_{12}^{3}=\Gamma_{12}^{4}=\frac{1}{2},\quad \Gamma_{13}^{2}=-\Gamma_{14}^{2}=-\Gamma_{23}^{1}=\Gamma_{24}^{1}=-1,\quad
\Gamma_{23}^{3}=\Gamma_{23}^{4}=-\Gamma_{24}^{3}=-\Gamma_{24}^{4}=-y,$$
whereas the other $\Gamma_{ij}^{k}$ vanish. To compute the $\varphi$-sectional curvature, being $\mathfrak{D}$ globally spanned by $X=\frac{\partial}{\partial x}-y\xi_{1}-y\xi_{2}$ and $Y=\varphi X=\frac{\partial}{\partial y}$, we value $H(X)$. So, we have
\begin{align*}
	R(X,\varphi X,X)
	&=\nabla_{X}\left(\Gamma_{21}^{h}-y(\Gamma_{23}^{h}+\Gamma_{24}^{h})\frac{\partial}{\partial x^{h}}-\xi_{1}-\xi_{2}\right)-\nabla_{\xi_{1}}X-\nabla_{\xi_{2}}X\\
	&=-\frac{1}{2}\nabla_{X}(\xi_{1}+\xi_{2})-(\Gamma_{31}^{h}-y(\Gamma_{33}^{h}+\Gamma_{34}^{h})
	+\Gamma_{41}^{h}-y(\Gamma_{43}^{h}+\Gamma_{44}^{h}))\frac{\partial}{\partial x^{h}}\\
	&=[\Gamma_{11}^{h}-y(\Gamma_{31}^{h}+\Gamma_{41}^{h})-y(\Gamma_{13}^{h} -y(\Gamma_{33}^{h}+\Gamma_{43}^{h})
	+\Gamma_{14}^{h}-y(\Gamma_{34}^{h}+\Gamma_{44}^{h}))]\frac{\partial}{\partial x^{h}}
	=0,
\end{align*}
\begin{align*}
	g(X,X)&=g(\frac{\partial}{\partial x},\frac{\partial}{\partial x})-2y(g(\frac{\partial}{\partial x},\xi_{1})+g(\frac{\partial}{\partial x},\xi_{2}))+y^{2}(g(\xi_{1},\xi_{1})+g(\xi_{1},\xi_{2})+g(\xi_{2},\xi_{2}))=\frac{1}{2}.
\end{align*}
It follows that
\[
H(X)=-\frac{1}{g(X,X)^{2}}g(R(X,\varphi X,X),\varphi X)=0.
\]
Then, $M$ is an indefinite $\mathcal{S}$-space form with $c=0=\varepsilon$ and, from (\ref{equivalente2}), the Riemannian curvature tensor field ${R}$ is given by:
\begin{align}
	{R}(  X,Y,Z,W)   &  = -\{  \bar{\eta}(  W)  \bar{\eta
}(  X)  {g}(  {\varphi}Z,{\varphi
}Y)  -\bar{\eta}(  W)  \bar{\eta}(
Y)  {g}(  {\varphi}Z,{\varphi}X)  \nonumber\\
&  \quad  +\bar{\eta
}(  Y)  \bar{\eta}(  Z)  {g}(
{\varphi}W,{\varphi}X)  -\bar{\eta}(  Z)  \bar{\eta}(  X)  {g}(  {\varphi}W,{\varphi}Y)
\} .\nonumber
\end{align}

Authors address\\
 Department of Mathematics,  University of Bari \\
  Via E. Orabona 4,\\
  I-70125 Bari (Italy) \\
  {\texttt{brunetti@dm.uniba.it}\,,  \texttt{pastore@dm.uniba.it}}
\end{document}